\newtheorem{theorem}{\textbf{Theorem}}[section]
\newtheorem{remark}[theorem]{\textbf{Remark}}
\newtheorem{lemma}[theorem]{\textbf{Lemma}}
\newtheorem{corollary}[theorem]{\textbf{Corollary}}
\newtheorem{proposition}[theorem]{\textbf{Proposition}}
\newtheorem{definition}[theorem]{\textbf{Definition}}
\newtheorem*{acknowledgement}{\textbf{Acknowledgement}}
\numberwithin{equation}{section}
\titleformat\section{}{}{0pt}{\Large\scshape\filcenter\thesection{} - }
\def\a{\alpha}
\def\aa{\alpha}
\def\bb{\beta}
\def\dd{\delta}
\def\ll{\lambda}
\def\LL{\Lambda}
\def\th{\theta}
\def\Th{\Theta}
\def\ee{\varepsilon}
\def\vp{\varphi}
\newcommand{\po}{{\overline p}}
\newcommand{\Xo}{{\overline X}}
\newcommand{\qo}{{\overline q}}
\newcommand{\uo}{{\overline u}}
\newcommand{\mo}{{\overline m}}
\newcommand{\muo}{{\overline{\mu}}}
\newcommand{\aao}{{\overline \aa}}
\newcommand{\cB}{{\cal B}}
\newcommand{\RR}{ \mathbb{R}}
\newcommand{\NN}{ \mathbb{N}}
\newcommand{\ZZ}{ \mathbb{Z}}
\newcommand{\EE}{{\mathbb E}}
\newcommand{\TT}{{\mathbb{T}}}
\newcommand{\cL}{{\mathcal L}}
\newcommand{\cP}{{\mathcal P}}
\newcommand{\bt}{{\widetilde b}}
\newcommand{\ut}{{\widetilde u}}
\newcommand{\mt}{{\widetilde m}}
\newcommand{\Xt}{{\widetilde X}}
\newcommand{\pt}{{\widetilde p}}
\newcommand{\qt}{{\widetilde q}}
\newcommand{\aat}{{\widetilde \aa}}
\newcommand{\mut}{{\widetilde \mu}}
\newcommand{\Ct}{{\widetilde C}}
\newcommand{\Kt}{{\widetilde K}}
\newcommand{\psit}{{\widetilde{\psi}}}
\DeclareMathOperator{\argmin}{\mathop{\rm argmin}}
\DeclareMathOperator{\supp}{\mathop{\rm supp}}
\newcommand{\ptt}{{\partial_{t}}}
\newcommand{\norm}[3][]{{\left\| #2 \right\|_{#3}^{#1}}}
\newcommand{\normc}[2]{{\left\| #1 \right\|_{C^{#2}}}}
\newcommand{\norminf}[2][]{{\left\| #2 \right\|_{\infty}^{#1}}}
\DeclareMathOperator{\divo} {div}
\newcommand{\lp}{\left(}
\newcommand{\rp}{\right)}
\newcommand{\lc}{\left\{}
\newcommand{\rc}{\right\}}
\newcommand{\lb}{\left[}
\newcommand{\rb}{\right]}
\newcommand{\labs}{\left|}
\newcommand{\rabs}{\right|}
\title{Mean Field Games with monotonous interactions through the law of states and controls of the agents}
\author{Z. Kobeissi\thanks{Laboratoire Jacques-Louis Lions, Univ. Paris Diderot, Sorbonne Paris Cit\'e, UMR 7598, UPMC, CNRS, 75205, Paris, France. zkobeissi@math.univ-paris-diderot.fr}}
\begin{document}
\maketitle

\begin{abstract}
    We consider a class of Mean Field Games
    in which the agents may interact through
    the statistical distribution of their states and controls.
    It is supposed that the Hamiltonian behaves like a power
    of its arguments as they tend to infinity,
    with an exponent larger than one.
    A monotonicity assumption is also made.
    Existence and uniqueness are proved using
    a priori estimates which stem from the monotonicity assumptions
    and Leray-Schauder theorem. Applications of the results are given.
\end{abstract}

\section{Introduction}
\label{sec:MonoIntro}
The theory of Mean Field Games (MFG for short)
aims at studying deterministic or stochastic differential
games (Nash equilibria) as the number of agents tends to infinity.
It has been introduced
in the independent works of J.M. Lasry and P.L. Lions
\cite{MR2269875,MR2271747,MR2295621},
and of M.Y. Huang, P.E. Caines
and R.Malham{\'e} \cite{MR2352434,MR2346927}.
The agents are supposed to be
rational (given a cost to be minimized, they always choose the optimal strategies), and indistinguishable.
Furthermore, the agents interact via some empirical averages of quantities which depend on the state variable.

The most common Mean Field Game systems,
in which the agents may interact only through
their states
can often be summarized by a system of
two coupled partial differential equations which is named
the MFG system.
On the one hand, 
the optimal value of a generic agent at
some time $t$ and state $x$ is denoted by $u(t, x)$ and is defined as the lowest cost that a representative
agent can achieve from time $t$ to $T$ if it is at state $x$ at time $t$. The value function satisfies a
Hamilton-Jacobi-Bellman equation posed backward in time with a terminal condition involving
a terminal cost.
On the other hand,
there is
a Fokker-Planck-Kolmogorov equation describing the evolution of the statistical distribution
m of the state variable; this equation is a forward in time parabolic equation, and the initial
distribution at time $t = 0$ is given.
Here we take a finite horizon time $T>0$,
and we only consider  second-order nondegenerate MFG systems.
In this case, the MFG system is often written as:
        \begin{equation}
            \label{eq:tradMFG}
            \lc
            \begin{aligned}
            &-\ptt u(t,x)
            - \nu\Delta u(t,x)
            + H(t,x, \nabla_xu(t,x))
            = f(t,x,m(t))
            &\text{ in }
            (0,T)\times\RR^d,\\
            & \ptt m(t,x)
            - \nu\Delta m(t,x)
            -\divo(H_p(t,x,\nabla_xu(t,x))m)
            =0
            &\text{ in }
            (0,T)\times\RR^d,\\
            &u(T,x)
            =g(x,m(T))
            &\text{ in }
            \RR^d,\\
            &m(0,x)
            =m_0(x)
            &\text{ in }
            \RR^d.
        \end{aligned}
            \right.
        \end{equation}
We refer the reader to
\cite{MR3967062}
for some theoretical results on
the convergence of the  $N$-agent Nash equilibrium
 to the solutions of the MFG system.
For a thorough study of the well-posedness
of the MFG system, see the videos
of P.L. Lions' lecture at the
Coll{\`e}ge de France,
and the lecture notes
\cite{Cardaliaguet_notes_on_MFG}.

In this paper we are considering a class of Mean Field Games
in which agents may interact through their states and controls.
To underline this, we choose to use the terminology
{\sl Mean Field Games of Controls (MFGCs)};
this terminology was introduced in \cite{MR3805247}.

Since the agents are assumed to be indistinguishable,
a representative agent may be described
by her state, which is a random process with value in $\RR^d$
denoted by $(X_t)_{t\in[0,T]}$ and satisfying
the following stochastic differential equation,
\begin{equation}
    \label{eq:SDEb}
    dX_t
    =
    b\lp t,X_t,\aa_t,\mu_{\aa}(t)\rp dt
    +\sqrt{2\nu}dW_t,
\end{equation}
where
$X_0$ is a random process
whose law is denoted by $m_0$,
$\lp W_t\rp_{t\in[0,T]}$ is
a Brownian motion on $\RR^d$
independefn with $X_0$,
and $\aa_t$ is the control
chosen by the agent at time $t$.
The diffusion coefficient $\nu$
is assumed to be uncontrolled, constant and positive.
The drift $b$ naturally depends on the control,
and may also depend on the time, the state,
and on the mean field interactions of all agents
through $\mu_{\aa}$ the joint distribution of states and controls.
At the equilibrium $\mu_{\aa}$ should be the law
of the state and the control of the representative agent,
i.e. $\mu_{\aa}(t)=\cL\lp X_t,\aa_t\rp$,
for $t\in[0,T]$.
The aim of an agent is to minimize
the functional given by,
\begin{equation}
    \EE\lb
    \int_0^T L\lp t,X_t,\aa_t,\mu_{\aa}(t)\rp
    +f\lp t, X_t,m(t)\rp dt
    +g\lp X_T,m(T)\rp
    \rb,
\end{equation}
where $m(t)$ is the distribution of agents at time $t$,
which should satisfy $m(t)=\cL\lp X_t\rp$
at the equilibrium.
The coupling function $f$ and the terminal cost $g$
depend on $m$ in a nonlocal manner.
From $L$ the Lagrangian and $b$ the drift,
we define $H$ the Hamiltonian by,
\begin{equation}
    \label{eq:defHMono}
    H\lp t,x,p,\mu_{\aa}\rp
    =
    \sup_{\aa\in\RR^d}
    -p\cdot b\lp t,x,\aa,\mu_{\aa}\rp
    -L\lp t,x,\aa,\mu_{\aa}\rp,
\end{equation}
for $(t,x)\in[0,T]\times\RR^d$,
$p\in\RR^d$
and $\mu_{\aa}\in\cP\lp\RR^d\times\RR^d\rp$,
where $\cP\lp\RR^d\times\RR^d\rp$ is the set
of probability measures on $\RR^d\times\RR^d$.
Under some assumptions on $b$ and $L$ that will be introduced later,
there exists a unique $\aa$ which achieves the supremum
in the latter equality and it also satisfies,
\begin{equation*}
    b\lp t,x,\aa,\mu_{\aa}\rp
    =
    -H_p\lp t,x,p,\mu_{\aa}\rp.
\end{equation*}
In an attempt to keep this paper easy to read,
we introduce $\mu_b$ as the joint law of states and drifts
defined by
\begin{equation}
    \label{eq:mubmuaa}
    \mu_b(t)
    =
    \Bigl[
    \lp x,\aa\rp
    \mapsto
    \lp x,
    b\lp t,x,\aa,\mu_{\aa}(t)\rp\rp
    \Bigr]{\#}\mu_{\aa}(t).
\end{equation}
We believe that the fixed point relation satisfied by $\mu_{\aa}$
at equilibrium is more clear if we distinguish $\mu_{b}$
from $\mu_{\aa}$.
We assume that $b$ is invertible with respect
to $\aa$ in such a way that its inverse map can be expressed
in term of $\mu_b$ instead of $\mu_{\aa}$,
see Assumption \ref{hypo:binvert} below.
This allows
us to define
$\aa^*:[0,T]\times\RR^d\times\RR^d\times\cP\lp\RR^d\times\RR^d\rp
\to\RR^d$ such that
\begin{equation*}
    \bt
    =
    b\lp t,x,\aa^*\lp t,x,\bt,\mu_b\rp,\mu_{\aa}\rp
\end{equation*}
for any $(t,x)\in[0,T]\times\RR^d$,
$\bt\in\RR^d$
and $\mu_{\aa},\mu_b\in\cP\lp\RR^d\times\RR^d\rp$
satisfying \eqref{eq:mubmuaa}.
Conversely,
for any $\aa\in\RR^d$
we have
\begin{equation*}
    \aa
    =
    \aa^*\lp t,x,b\lp t,x,\aa,\mu_{\aa}\rp,\mu_{b}\rp,
\end{equation*}
since $b\lp t,x,\cdot,\mu_{\aa}\rp$ is injective.
This implies that the equality \eqref{eq:mubmuaa}
can be inverted  to express $\mu_{\aa}$ in term of $\mu_b$
and we obtain \eqref{eq:muaa} below.
Within this framework, the usual MFG system \eqref{eq:tradMFG}
is replaced by the following Mean Field Game of Controls (MFGC for short) system,
\begin{subequations}\label{eq:MFGCb}
     \begin{empheq}{align}
            \label{eq:HJBb}
            &-\ptt u(t,x)
            - \nu\Delta u(t,x)
            + H\lp t,x, \nabla_xu(t,x),\mu_{\aa}(t)\rp
            =
            f(t,x,m(t))
            &\text{ in }
            (0,T)\times\RR^d,
            \\
            \label{eq:FPKb}
            & \ptt m(t,x)
            - \nu\Delta m(t,x)
            -\divo\lp H_p\lp t,x,\nabla_xu(t,x),\mu_{\aa}(t)\rp m\rp
            =
            0
            &\text{ in }
            (0,T)\times\RR^d,
            \\
            \label{eq:muaa}
            &\mu_{\aa}(t)
            =
            \Bigl[
            \lp x,\bt\rp
            \mapsto
            \lp x,
            \aa^*\lp t,x,\bt,\mu_{b}(t)\rp\rp
            \Bigr]{\#}\mu_{b}(t)
            &\text{ in } [0,T],
            \\
            \label{eq:mub}
            &\mu_b(t)
            =
            \Bigl(
            I_d,
            -H_p\lp t,\cdot,\nabla_xu(t,\cdot),\mu_{\aa}(t)\rp
            \Bigr){\#}m(t)
            &\text{ in } [0,T],
            \\
            \label{eq:CFub}
            &u(T,x)
            =g(x,m(T))
            &\text{ in }
            \RR^d,\\
            \label{eq:CImb}
            &m(0,x)
            =m_0(x)
            &\text{ in }
            \RR^d.
    \end{empheq}
\end{subequations}
The structural assumption
under which we prove existence and uniqueness
of the solution to \eqref{eq:MFGCb}
is that $L$ satisfies the following
inequality,
\begin{equation*}
    \int_{\RR^d\times\RR^d}
    \lp L\lp t, x,\a,\mu^1 \rp
    -L\lp t, x,\a,\mu^2\rp \rp
    d\lp \mu^1-\mu^2 \rp (x,\aa)
    \geq
    0.
\end{equation*}
for any $t\in[0,T]$,
$\mu^1, \mu^2 \in
\cP\lp\RR^d\times \RR^d \rp$.
This is the Lasry-Lions monotonicity assumption
extended to MFGC that will be referred to as
\ref{hypo:LMono}.
This assumption
is particularly adapted to
applications in economics or finance.

This work follows naturally the analysis
in \cite{2019arXiv190411292K}
in which a MFGC system in the
$d$-dimensional torus and with $b=\aa$
is considered.
In \cite{2019arXiv190411292K},
the monotonicity assumption is replaced by another structural
assumption,
namely that the optimal control $-H_p$
is a contraction with respect to the second marginal
of $\mu$ (when the other arguments and the first marginal 
are fixed)
and that it is bounded
by a quantity that 
depends linearly on the second marginal
of $\mu$ with a coefficient
smaller than $1$.

\subsection*{Related works}
Monotonicity assumptions for MFGC like \ref{hypo:LMono}
have already been discussed 
in \cite{MR3805247,MR3752669,MR3112690}.
In \cite{MR3112690},
the authors proved uniqueness of the solution
to \eqref{eq:MFGCb} with
$b=\aa$ and $\nu=0$ when it exists.
In \cite{MR3752669}
Section $4.6$,
existence and uniqueness
are proved
in the quadratic case with a
uniformly convex Lagrangian and
under an additional linear growth assumption on $H_x$.
In \cite{MR3805247},
the existence of weak solutions
to a MFGC system with
a possibly degenerate diffusion operator
is proved assuming that the inequalities satisfied by
$H$, its derivatives or the optimal control
(here defined in \ref{hypo:binvert} as $\aa^*$),
are uniform with respect to
the joint law of states and controls $\mu_{\aa}$.

A particular application of MFGC satisfying
\ref{hypo:LMono},
namely the Bertrand and Cournot competition
for exhaustible ressource
described in paragraph \ref{subsec:Monoexhau},
has been broadly investigated in the literature.
Let us mention a non exhaustive list of such works:
\cite{2019arXiv190205461F,MR3359708,MR3755719,
MR2762362,MR4064472}.
Its mean field version has been introduced in
\cite{MR2762362},
and obtained from the $N$-agent game in
\cite{MR3359708}
in the case of a linear supply-demand function.
A generalization to the multi-dimensional case
is discussed in \cite{2019arXiv190205461F},
and an extension to negatively correlated
ressources is addressed in \cite{2019arXiv190411292K}.

A class of MFGC in which the Lagrangian
depends separately
on $\aa$ and $\mu$,
has been investigated in
\cite{MR3325272}
and \cite{MR3752669}.
In this case,
\ref{hypo:LMono} is naturally
satisfied since the left-hand side
of the inequality is identically equal to $0$.
An existence result is proved in \cite{MR3325272}
under the additional assumption that the set of admissible
controls is compact.
The existence of solution is also proved
in \cite{MR3752669}
Theorem $4.65$ when
the dependency of $L$ upon $\mu$
is uniformly bounded with respect to $\mu$.

The non-monotone case has been studied
in \cite{MR3160525,2019arXiv190411292K}.
In \cite{MR3160525},
an existence result is proved in the
stationnary setting and under
the assumption that the dependence
of $H$ on $\mu$ is small.
In \cite{2019arXiv190411292K},
the existence of solutions to the MFGC system
in the $d$-dimensional torus and with $b=\aa$
is discussed under similar growth assumptions as here.
By and large, existence
of solutions to a MFGC system posed on the $d$-dimensional torus
and with $b=\aa$
was proved in \cite{2019arXiv190411292K} 
in any of the following cases:
\begin{itemize}
    \item short time horizon,
    \item small enough parameters,
    \item weak dependency of $H$ upon $\mu$,
    \item weak dependency of $H_x$ upon $\mu$,
\end{itemize}
and uniqueness is proved only for a short time horizon.
Indeed without
a monotonicity assumption, it is unlikely
that uniqueness holds in general,
numerical examples of non-uniqueness of solutions
to discrete approximations of \eqref{eq:MFGCb}
with $b=\aa$ and in a bounded domain
are showed in \cite{achdou2020mean}.

\subsection*{Organization of the paper}
In Section \ref{sec:MonoAssumptions},
the notations and the assumptions are described,
the case when the control is equal to the drift is discussed.
The main results of the paper,
namely the existence and uniqueness of solution
to \eqref{eq:MFGCb}, are stated in paragraph \ref{subsec:mainMono}.
We give some insights on our strategy for proving
the main results in paragraph \ref{subsec:outlineMono}.
Two applications of the MFGC system \eqref{eq:MFGCb}
are presented in Section \ref{sec:appli}.
Section \ref{sec:MonoFPV} is devoted to solving the
fixed point relation in the joint law of state and control
in the particular case when the drift is equal to the control.
Section \ref{sec:aprioriMono} consists in
giving a priori estimates
for a MFGC system posed 
on the $d$-dimensional torus.
In Section \ref{sec:MainMono},
we prove existence and uniqueness of the solution
to \eqref{eq:MFGCb}
and of an intermediate MFGC system.

\section{Assumptions}
\label{sec:MonoAssumptions}

\subsection{Notations}
\label{subsec:notaMono}
The spaces of probability measures 
are equipped with the weak* topology.
We denote by
$\cP_2\lp\RR^d\rp$
the subset of
$\cP\lp\RR^d\rp$
of probability measures
with finite second moments,
and
$\cP_{\infty}\lp\RR^d\times\RR^d\rp$
the subset of measures $\mu$ in $\cP\lp\RR^d\times\RR^d\rp$
with a second marginal compactly supported.
For $\mu\in\cP_{\infty}\lp\RR^d\times\RR^d\rp$
and $\qt\in[1,\infty)$,
we define the quantities
$\LL_{\qt}(\mu)$ and $\LL_{\infty}(\mu)$ by,
\begin{equation}
    \label{eq:defL}
\begin{aligned}
    \LL_{\qt}(\mu)
    &=
    \lp\int_{\RR^d\times\RR^d}
    \labs\aa\rabs^{\qt}d\mu\lp x,\alpha\rp\rp^{\frac1{\qt}},
    \\
    \LL_{\infty}(\mu)
    &=
    \sup\lc\labs\aa\rabs,
    (x,\aa)\in\supp\mu\rc.
\end{aligned}
\end{equation}
For $R>0$, we denote by
$\cP_{\infty,R}\lp\RR^d\times\RR^d\rp$
the subset of measures $\mu$ in
$\cP_{\infty}\lp\RR^d\times\RR^d\rp$
such that $\LL_{\infty}\lp\mu\rp\leq R$.
The probability measures $\mu_{\aa}$ and $\mu_b$ 
involved in
\eqref{eq:MFGCb},
have a particular form,
since they are the images of a measure
$m$ on $\RR^d$
by $\lp I_d,\aa\rp$ and $\lp I_d,b\rp$ respectively,
where $\aa$ and $b$ are bounded measurable
functions from $\RR^d$ to $\RR^d$;
in particular they are supported on the graph of $\aa$
and $b$ respectively.
For $m\in\cP\lp\RR^d\rp$,
we call
$\cP_m\lp\RR^d\times\RR^d\rp$
the set of such measures.
For $\mu\in\cP_m\lp\RR^d\times\RR^d\rp$,
we set $\aa^{\mu}$ to be the unique 
element of $L^{\infty}\lp m\rp$
such that $\mu=\lp I_d,\aa^{\mu}\rp\#m$.
Here, $\LL_{\qt}(\mu)$ and $\LL_{\infty}(\mu)$
defined in \eqref{eq:defL} are given by
\begin{equation}
    \label{eq:defLbis}
\begin{aligned}
    \LL_{q'}(\mu)
    &=
    \norm{\aa^{\mu}}{L^{q'}(m)},
    \\
    \LL_{\infty}(\mu)
    &=
    \norm{\aa^{\mu}}{L^{\infty}(m)}.
\end{aligned}
\end{equation}
Let $C^{0}\lp[0,T]\times\RR^d;\RR^n\rp$ be the set
of bounded continuous functions from $[0,T]\times\RR^d$
to $\RR^n$, for $n$ a positive integer.
We define $C^{0,1}\lp[0,T]\times\RR^d;\RR\rp$ as the set
of the functions
$v\in C^0\lp[0,T]\times\RR^d;\RR\rp$ differentiable at any point with respect to
the state variable, and whose its gradient $\nabla_xv$ is in
$C^{0}\lp[0,T]\times\RR^d;\RR^d\rp$
the set of continuous functions
from $[0,T]\times\RR^d$ to $\RR^d$.
We shall have the use of the parabolic spaces
of Hölder continuous functions
$C^{\frac{\bb}2,\bb}([0,T]\times\RR^d;\RR^n)$
defined for any $\bb\in(0,1)$ and $n\geq 1$ by,
\begin{equation*}
    C^{\frac{\bb}2,\bb}\lp[0,T]\times\RR^d;\RR^n\rp 
    = \lc
    \begin{aligned}
        v& \in C^0([0,T]\times\RR^d;\RR^n),
        \exists C>0 \text{ s.t. }
        \forall (t_1,x_1),(t_2,x_2)\in [0,T]\times\RR^d, \\
        &|v(t_1,x_1)-v(t_2,x_2)|\leq
        C\lp|x_1-x_2|^2 +|t_1-t_2|\rp^{\frac{\bb}2} 
    \end{aligned}
    \rc.
\end{equation*}
This is a Banach space equipped with the norm,
\begin{equation*}
    \norm{v}{C^{\frac{\bb}2,\bb}} 
    =
    \norminf{v}
    +\sup_{(t_1,x_1)\neq(t_2,x_2)}
    \frac{|v(t_1,x_1)-v(t_2,x_2)|}
    {\lp|x_1-x_2|^2 +|t_1-t_2|\rp^{\frac{\bb}2}}.
\end{equation*}
Then we introduce the Banach space
$C^{\frac{1+\bb}2,1+\bb}([0,T]\times\RR^d;\RR)$
for $\bb\in(0,1)$
as the set of the functions
$v\in C^{0,1}([0,T]\times\RR^d;\RR)$
such that $\nabla_xv\in C^{\frac{\bb}2,\bb}\lp[0,T]\times\RR^d;\RR^n\rp$
and which admits a finite norm defined by,
\begin{equation*}
    \normc{v}{\frac{1+\bb}2,1+\bb}
    =
    \norminf{v}
    +\norm{\nabla_xv}{C^{\frac{\bb}2,\bb}}
    + \sup_{(t_1,x)\neq (t_2,x)\in [0,T]\times\RR^d}
    \frac{|v(t_1,x)-v(t_2,x)|}{|t_1-t_2|^{\frac{1+\bb}2}}.
\end{equation*}

When the drift $b$ is equal to the control $\aa$,
\eqref{eq:MFGCb}
can be simplified in the following system,
\begin{subequations}\label{eq:MFGCrd}
     \begin{empheq}{align}
            \label{eq:HJBrd}
            &-\ptt u(t,x)
            - \nu\Delta u(t,x)
            + H\lp t,x, \nabla_xu(t,x),\mu(t)\rp
            =
            f(t,x,m(t))
            &\text{ in }
            (0,T)\times\RR^d,\\
            \label{eq:FPKrd}
            & \ptt m(t,x)
            - \nu\Delta m(t,x)
            -\divo\lp H_p\lp t,x,\nabla_xu(t,x),\mu(t)\rp m\rp
            =
            0
            &\text{ in }
            (0,T)\times\RR^d,\\
            \label{eq:murd}
            &\mu(t)
            = \Bigl(
            I_d,
            -H_p\lp t,\cdot,\nabla_xu(t,\cdot),\mu(t)\rp
            \Bigr){\#}m(t)
            &\text{ in } [0,T],\\
            \label{eq:CFurd}
            &u(T,x)
            =g(x,m(T))
            &\text{ in }
            \RR^d,\\
            \label{eq:CImrd}
            &m(0,x)
            =m_0(x)
            &\text{ in }
            \RR^d.
    \end{empheq}
\end{subequations}
Here, making a distinction between $\mu_{\aa}$ and $\mu_b$
is pointless since they coincide.
Therefore, we simply use the notation $\mu$.
For the system \eqref{eq:MFGCrd},
the Hamiltonian is defined as the Legendre transform of $L$,
\begin{equation}
    \label{eq:defHrd}
    H\lp t,x,p,\mu\rp
    =
    \sup_{\aa\in\RR^d}
    -p\cdot\aa-L\lp t,x,\aa,\mu\rp.
\end{equation}

\begin{definition}
    \label{def:sol}
    We say that
    $\lp u,m,\mu_{\aa},\mu_b\rp$
    is a solution to
    \eqref{eq:MFGCb}
    if
    \begin{itemize}
        \item
            $u\in C^{0,1}\lp [0,T]\times\RR^d;\RR\rp$
            is a solution to the heat equation in the sense
            of distributions with a right-hand side equal to
            $(t,x)\mapsto f(t,x,m(t))- H\lp t,x,\nabla_xu,\mu(t)\rp$,
            and satisfies the terminal condition
            \eqref{eq:CFub},
        \item
            $m\in C^0\lp[0,T];\cP\lp\RR^d\rp\rp$
            is a solution to
            \eqref{eq:FPKb}
            in the sense of distributions,
            and satisfies the initial condition
            \eqref{eq:CImb},
        \item
            $\mu_{\aa}(t),\mu_b(t)\in\cP\lp\RR^d\times\RR^d\rp$
            satisfy \eqref{eq:muaa} and \eqref{eq:mub}
            for any $t\in[0,T]$.
    \end{itemize}
    We say that
    $\lp u,m,\mu\rp$
    is a solution to
    \eqref{eq:MFGCrd}
    if $u$ and $m$ respectively satisfy
    the first two points of the
    latter definition with \eqref{eq:MFGCrd}
    instead of \eqref{eq:MFGCb},
    and if $\mu(t)\in\cP\lp\RR^d\times\RR^d\rp$
    satisfies \eqref{eq:murd}
    for any $t\in[0,T]$.
\end{definition}

\subsection{Hypothesis}
\label{subsec:hypoMono}
The monotonicity assumption made in this paper
concerns the Lagrangian.
For this reason and the fact that sometimes
it may be hard to obtain an explicit form
of the Hamiltonian (like in the example
of paragraph \ref{subsec:abncrowd} below),
all the assumptions will be formulated in term of the Lagrangian
and never in term of the Hamiltonian.
In particular, working with the Lagrangian
gives more flexibility in the arguments of the proofs.

The constants entering the assumptions are
$C_0$ a positive constant,
$q\in(1,\infty)$ an exponent,
$q'=\frac{q}{q-1}$ its conjugate exponent,
and $\bb_0\in(0,1)$ a Hölder exponent. 
\begin{enumerate}[label=\bf{A\arabic*}]
    \item
        \label{hypo:Lreg}
        $L:[0,T]\times\RR^d\times\RR^d\times\cP\lp\RR^d\times\RR^d\rp
        \to\RR$ 
        is differentiable with respect to $\lp x,\aa\rp$;
        $L$ and its derivatives 
        are continuous on
        $[0,T]\times\RR^d\times\RR^d\times\cP_{\infty,R}\lp\RR^d\times\RR^d\rp$
        for any $R>0$;
        we recall that $\cP_{\infty,R}\lp\RR^d\times\RR^d\rp$
        is endowed with the weak* topology
        on measures;
        we use the notation $L_x$, $L_{\aa}$
        and $L_{(x,\aa)}$ for
        respectively the first-order derivatives
        of $L$ with respect to $x$, $\aa$
        and $(x,\aa)$
        .
    \item
        \label{hypo:Lbconv}
        The maximum in \eqref{eq:defHMono}
        is achieved at a unique $\aa\in\RR^d$.
        
    \item
        \label{hypo:LMono}
        $L$ satisfies the following monotonicity condition,
        \begin{equation*}
            \int_{\RR^d\times\RR^d}
            \lp L\lp t, x,\a,\mu^1 \rp
            -L\lp t, x,\a,\mu^2\rp \rp
            d\lp \mu^1-\mu^2 \rp (x,\aa)
            \geq
            0.
        \end{equation*}
        for any $t\in[0,T]$,
        $\mu^1, \mu^2 \in
        \cP\lp\RR^d\times \RR^d \rp$.
    \item
        \label{hypo:Lcoer}
        $L(t,x,\aa,\mu)
        \geq
        C_0^{-1}|\aa|^{q'}
        -C_0\lp 1+\LL_{q'}\lp\mu\rp^{q'}\rp$,
        where $\LL_{q'}$ is defined in \eqref{eq:defL},
    \item
        \label{hypo:Lbound}
        $\labs L(t,x,\aa,\mu)\rabs
        \leq
        C_0\lp1+|\aa|^{q'}
        +\LL_{q'}\lp\mu\rp^{q'}\rp$,
        and
        $\labs L_x(t,x,\aa,\mu)\rabs
        \leq
        C_0\lp1+|\aa|^{q'}
        +\LL_{q'}\lp\mu\rp^{q'}\rp$,
    \item
        \label{hypo:Monogregx}
        $\int_{\RR^d}|x|^2dm^0(x)\leq C_0$,
        $\norm{m^0}{C^{\bb_0}}\leq C_0$,
        $\norm{f(t,\cdot,m)}{C^{1}}\leq C_0$,
        $\norm{g(\cdot,m)}{C^{2+\bb_0}}\leq C_0$,
        for any $t\in[0,T]$
        and $m\in\cP\lp\RR^d\rp$.
\end{enumerate}
Assumption \ref{hypo:LMono}
can be interpreted as a natural extension of
the Lasry-Lions monotonicity condition to MFGC.
Roughly speaking,
the Lasry-Lions monotonicity condition
in the case of MFG without
interaction through controls,
translates the fact that the agents
have aversion for crowed regions of the state space.
In the case of MFGC,
the monotonicity condition implies
that the agents favor
moving in a direction opposite to the mainstream.
Such an assumption is adapted to models of
agents trading goods or financial assets.
Indeed in most of the models coming from economics or finance,
a buyer may prefer to buy when no one else is buying,
and conversely
a seller may prefer to sell when no one else is selling.

Assumptions
\ref{hypo:Lcoer} and \ref{hypo:Lbound}
imply that at least asymptotically 
when $\aa$ tends to infinity,
$L$ behaves like a power of $\aa$ of exponent
$q'$.
Under the monotonicity assumption
\ref{hypo:LMono},
uniqueness is in general easier
to obtain than existence.
For uniqueness, we assume that $f$ and $g$
are also monotonous,
this is the purpose of the following assumption.

\begin{enumerate}[label=\bf{U}]
    \item 
        \label{hypo:gMono}
        For $m^1,m^2\in \cP\lp \RR^d\rp$,
        and $t\in[0,T]$,
        assume that,
        \begin{align*}
            \int_{\RR^d}
            \lp f\lp t,x,m^1 \rp
            -f\lp t,x,m^2\rp \rp
            d\lp m^1-m^2 \rp (x)
            \geq
            0,
            \\
            \int_{\RR^d}
            \lp g\lp x,m^1 \rp
            -g\lp x,m^2\rp \rp
            d\lp m^1-m^2 \rp (x)
            \geq
            0.
        \end{align*}
\end{enumerate}
In fact, assuming that $f$ satisfies the inequality
in \ref{hypo:gMono}, implies that we can take $f=0$
up to replacing $L$ by $L+f$
and $H$ by $H-f$.
However,
since \ref{hypo:gMono}
is not assumed for proving the existence of solutions,
we have chosen to write this assumption
explicitly, and keeping $f\neq0$ is not pointless.

Let us now make assumptions on the drift
function $b$,
which concern the system \eqref{eq:MFGCb},
\begin{enumerate}[label=\bf{B\arabic*}]
    \item
        \label{hypo:binvert}
        There exists a function
        $\aa^*:[0,T]\times\RR^d\times\RR^d\times\cP\lp\RR^d\times\RR^d\rp
        \rightarrow\RR^d$
        such that for any
        $\lp t,x,\bt,\mu_{\aa}\rp
        \in[0,T]\times\RR^d\times\RR^d\times\cP\lp\RR^d\times\RR^d\rp$,
        \begin{equation*}
            \bt
            =
            b\lp t,x,\aa^*\lp t,x,\bt,\mu_b\rp,\mu_{\aa}\rp,
        \end{equation*}
        where $\mu_b$ is defined by
        $\mu_b
        =
        \Bigl[
        \lp x,\aa\rp
        \mapsto
        \lp x,
        b\lp t,x,\aa,\mu_{\aa}\rp\rp
        \Bigr]{\#}\mu_{\aa}$.
        Moreover $\aa^*$ is
        differentiable with respect to $x$ and $\bt$;
        $\aa^*$ and its derivatives 
        are continuous on
        $[0,T]\times\RR^d\times\RR^d\times\cP_{\infty,R}\lp\RR^d\times\RR^d\rp$
        for any $R>0$.
    \item
        \label{hypo:bbound}
        $b$ and $\aa^*$ satisfy
    \begin{align*}
        &\labs b\lp t,x,\aa,\mu_\aa\rp\rabs
        \leq
        C_0\lp 1+ |\aa|^{q_0}
        +\LL_{q'}\lp\mu_{\aa}\rp^{q_0}\rp,
        \\
        &\labs \aa^*\lp t,x,\bt,\mu_{b}\rp\rabs^{q_0}
        +\labs \aa^*_x\lp t,x,\bt,\mu_{b}\rp\rabs^{q_0}
        \leq
        C_0\lp 1+ |\bt|
        +\mathbf{1}_{q_0\neq 0}\LL_{\frac{q'}{q_0}}\lp\mu_{b}\rp\rp,
    \end{align*}
    for some exponent $q_0$ such that $0\leq q_0\leq q'$.
\end{enumerate}
Roughly speaking \ref{hypo:binvert} means that $b$ is invertible with respect
to $\aa$ in such a way that its inverse map can be expressed
in term of $\mu_b$ instead of $\mu_{\aa}$.
Conversely, if \ref{hypo:binvert} holds,
then for any
$\lp t,x,\aa,\mu_{b}\rp
\in[0,T]\times\RR^d\times\RR^d\times\cP\lp\RR^d\times\RR^d\rp$,
\begin{equation*}
    \aa
    =
    \aa^*\lp t,x,b\lp t,x,\aa,\mu_{\aa}\rp,\mu_{b}\rp.    
\end{equation*}
where $\mu_{\aa}$ is defined by
$\mu_{\aa}
=
\Bigl[
\lp x,\bt\rp
\mapsto
\lp x,
\aa^*\lp t,x,\bt,\mu_{b}\rp\rp
\Bigr]{\#}\mu_{b}$.
The inequalities in \ref{hypo:bbound}
means that $|b|$ behaves asymptotically like a
power of $\aa$ with exponent
$q_0$, when $|\aa|$ is large.

To our knowledge,
such a general assumption on the class of drift functions
has not been made in the MFGC literature.

\subsection{Main results}
\label{subsec:mainMono}
The two main results in this work are
Theorems \ref{thm:UniMonob}
and \ref{thm:bpower} below,
which respectively state the existence
and uniqueness of the solution to \eqref{eq:MFGCb}.
\begin{theorem}
    \label{thm:UniMonob}
    Under assumptions
    \ref{hypo:Lreg}-\ref{hypo:LMono},
    \ref{hypo:gMono}
    and \ref{hypo:binvert}
    there is at most one solution
    to \eqref{eq:MFGCb}.
\end{theorem}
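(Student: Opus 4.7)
\textbf{Plan: reduction to \eqref{eq:MFGCrd}.} I follow the classical Lasry--Lions uniqueness strategy, after first reducing the problem from \eqref{eq:MFGCb} to the simpler system \eqref{eq:MFGCrd} in which drift and control coincide. The bijection between solutions of the two systems (forthcoming as Lemma~\ref{lem:equisol}) is built from the invertibility map $\aa^*$ of \ref{hypo:binvert}: one replaces the unknown control $\aa$ by $\bt=b(t,x,\aa,\mu_{\aa})$. Since the monotonicity pairing in \ref{hypo:LMono} involves the Lagrangian through $\int L\,d(\mu^1-\mu^2)$ and $\aa\leftrightarrow\aa^*$ is a fibre-wise homeomorphism, this monotonicity is preserved by the change of variable. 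It therefore suffices to prove uniqueness for \eqref{eq:MFGCrd}.

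\textbf{Energy identity.} Given two solutions $(u^i,m^i,\mu^i)$, $i=1,2$, of \eqref{eq:MFGCrd}, I set $U=u^1-u^2$ and $M=m^1-m^2$. Testing the HJB difference against $M$, the FPK difference against $U$, summing and integrating in $x$, the Laplacian terms cancel by integration by parts and the time derivatives combine as $\frac{d}{dt}\int UM\,dx$. Integrating from $0$ to $T$ and using $M(0)=0$ together with $U(T)=g(\cdot,m^1(T))-g(\cdot,m^2(T))$, I obtain
\begin{equation*}
    \int_{\RR^d}(g(\cdot,m^1(T))-g(\cdot,m^2(T)))\,d(m^1(T)-m^2(T))
    +\int_0^T X(t)\,dt
    +\int_0^T\!\!\int_{\RR^d}(f^1-f^2)(m^1-m^2)\,dx\,dt = 0,
\end{equation*}
where
\begin{equation*}
    X(t):=\int_{\RR^d}(H_p^1 m^1-H_p^2 m^2)\cdot\nabla U\,dx-\int_{\RR^d}(H^1-H^2)(m^1-m^2)\,dx,
\end{equation*}
and the $H^i,H_p^i$ are evaluated at $(t,x,\nabla u^i,\mu^i)$. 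Assumption \ref{hypo:gMono} makes the first and third terms nonnegative, so $\int_0^T X(t)\,dt\leq 0$.

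\textbf{Monotonicity estimate and conclusion.} Using $H^i=\nabla u^i\cdot H_p^i - L^i$ with $L^i:=L(t,x,-H_p^i,\mu^i)$, a direct regrouping of the four summands in $X(t)$ gives
\begin{equation*}
    X(t)=\int L^1 m^1+\int L^2 m^2+\int (H^2-H_p^1\cdot\nabla u^2)\,m^1+\int (H^1-H_p^2\cdot\nabla u^1)\,m^2.
\end{equation*}
The Fenchel inequality $L(t,x,\aa,\mu)\geq -p\cdot\aa-H(t,x,p,\mu)$, applied with $(\aa,p,\mu)=(-H_p^1,\nabla u^2,\mu^2)$, bounds the third integral below by $-\int L(t,x,-H_p^1,\mu^2)\,m^1\,dx$; the symmetric choice with indices swapped handles the fourth. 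Rewriting the remaining $m^i$-integrals as integrals against $\mu^i=(I_d,-H_p^i)\#m^i$ via \eqref{eq:murd} then yields
\begin{equation*}
    X(t)\geq \int_{\RR^d\times\RR^d}[L(t,x,\aa,\mu^1)-L(t,x,\aa,\mu^2)]\,d(\mu^1-\mu^2)(x,\aa)\geq 0
\end{equation*}
by \ref{hypo:LMono}. Combined with $\int_0^T X\,dt\leq 0$, all intermediate inequalities are equalities, and the equality case of the Fenchel inequality together with the uniqueness of the argmax from \ref{hypo:Lbconv} forces $-H_p^1=-H_p^2$ on $\supp(m^1(t))\cup\supp(m^2(t))$ for a.e.\ $t$. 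The FPK equation for $M$ then reduces to a linear parabolic equation with a common drift and vanishing initial condition, yielding $m^1=m^2$ by standard uniqueness; consequently $\mu^1=\mu^2$ and hence $u^1=u^2$. I expect the main obstacle to be this algebraic rearrangement of $X$: two Fenchel inequalities and the precise pushforward structure $\mu^i=(I_d,-H_p^i)\#m^i$ must combine so that the leftover expression matches exactly the pairing required by \ref{hypo:LMono}. A secondary technical point is justifying the integration by parts on the whole of $\RR^d$, which requires decay at infinity of $m$, $u$ and their derivatives.
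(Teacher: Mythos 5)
Your proposal is correct and follows essentially the same route as the paper: the reduction of \eqref{eq:MFGCb} to \eqref{eq:MFGCrd} via the change of control $\aa\leftrightarrow b$ is exactly Lemma \ref{lem:equisol}, and your energy identity, regrouping of $X(t)$ via the conjugacy $H=p\cdot H_p-L$, and final appeal to \ref{hypo:LMono} reproduce the proof of Theorem \ref{thm:UniMono}, with your use of the Fenchel inequality being equivalent to the paper's first-order convexity inequality \eqref{eq:Lconvex} (both rest on Lemma \ref{lem:Lstrconv}). The one assertion you make that the paper also leaves implicit is that the monotonicity \ref{hypo:LMono} is inherited by the transformed Lagrangian $L^b$; your "fibre-wise homeomorphism" remark does not by itself settle the cross terms $\int L^b(\cdot,\mu_b^1)\,d\mu_b^2$, since the map $\aa^*(t,x,\cdot,\mu_b)$ depends on $\mu_b$, but this is the same step the paper takes without further comment.
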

Uniqueness results for MFGC systems
with a monotonicity assumption
have been proved in
\cite{MR3112690} and \cite{MR3752669}.
In \cite{MR3112690},
uniqueness is proved when
the diffusion coefficient is equal
to $0$ and the drift is
equal to the control, i.e. $\nu=0$ and $b=\aa$.
In \cite{MR3752669} Section $4.6$,
the authors stated uniqueness
in the quadratic case.
Theorem
\ref{thm:UniMonob} is new in the sense
that it yields uniqueness for a large new class
of Lagrangians and drift functions.
Indeed, beside the monotonicity
assumption \ref{hypo:LMono} and \ref{hypo:gMono},
we only assume that $L$ satisfies \ref{hypo:Lreg} and
\ref{hypo:Lbconv}, and
that the drift $b$ is invertible
in the sense of \ref{hypo:binvert}.
\begin{theorem}
    \label{thm:bpower}
    Under assumptions
    \ref{hypo:Lreg}-\ref{hypo:Monogregx}
    and
    \ref{hypo:binvert}-\ref{hypo:bbound},
    there exists a solution to
    \eqref{eq:MFGCb}.
    %
\end{theorem}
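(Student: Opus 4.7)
The plan is to reduce Theorem \ref{thm:bpower} to the existence result for the simpler system \eqref{eq:MFGCrd} (where the drift coincides with the control). The bridge between the two systems is the invertibility of $b$ in $\aa$ granted by \ref{hypo:binvert}: I would perform the change of variable $\bt = b(t,x,\aa,\mu_{\aa})$ to transform \eqref{eq:MFGCb} into a problem of the form \eqref{eq:MFGCrd} driven by the joint law $\mu_b$ of state and drift in place of $\mu_{\aa}$, and then invoke the existence theorem for \eqref{eq:MFGCrd} that is established elsewhere in the paper.

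Concretely, starting from a candidate $\mu_b \in \cP(\RR^d \times \RR^d)$, one recovers $\mu_{\aa}$ through the push-forward \eqref{eq:muaa}, and sets
\begin{equation*}
    \Lt(t,x,\bt,\mu_b) := L\bigl(t,x,\aa^*(t,x,\bt,\mu_b),\mu_{\aa}\bigr).
\end{equation*}
A direct computation using \ref{hypo:binvert} shows that the Legendre transform of $\Lt$ in the variable $\bt$ coincides with $H(t,x,\cdot,\mu_{\aa})$, and that its gradient returns the drift associated with the optimal $\aa$. It follows that the quadruple $(u,m,\mu_{\aa},\mu_b)$ solves \eqref{eq:MFGCb} if and only if the triple $(u,m,\mu_b)$ solves the system \eqref{eq:MFGCrd} with $L$ replaced by $\Lt$, the probability $\mu_{\aa}$ being reconstructed a posteriori via \eqref{eq:muaa}. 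Existence for \eqref{eq:MFGCb} is thus reduced to existence for this transformed system.

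To apply the existence theorem for \eqref{eq:MFGCrd} one must check that $\Lt$ satisfies the hypotheses \ref{hypo:Lreg}--\ref{hypo:Monogregx} with an exponent $\qt'$ in place of $q'$. The regularity \ref{hypo:Lreg} and the strict convexity condition \ref{hypo:Lbconv} transfer from $L$ to $\Lt$ by the chain rule, using the smoothness and bijectivity of $\aa^*$ from \ref{hypo:binvert}. The growth and coercivity bounds \ref{hypo:Lcoer}--\ref{hypo:Lbound} follow by composing the polynomial bound $|\aa^*|^{q_0} \leq C_0(1 + |\bt| + \LL_{q'/q_0}(\mu_b))$ of \ref{hypo:bbound} with the bound on $L$, yielding the new exponent $\qt' = q'/q_0$, which is strictly greater than $1$ under the condition $q_0 < q'$ of \ref{hypo:bbound}; the degenerate endpoints $q_0 \in \{0,q'\}$ require minor separate bookkeeping. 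The assumption \ref{hypo:Monogregx} on $f$, $g$, $m_0$ is unchanged since the change of variable only touches $L$.

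The principal difficulty is establishing the monotonicity \ref{hypo:LMono} for $\Lt$ with respect to $\mu_b$: pointwise composition with the measure-dependent map $\aa^*(\cdot,\mu_b)$ mixes the dependencies asymmetrically, so if one sets $\phi_i(x,\bt) := (x,\aa^*(t,x,\bt,\mu_b^i))$, the push-forward $\phi_i \# \mu_b^j$ does not coincide with $\mu_{\aa}^j$ when $i \neq j$, and the integrals arising in \ref{hypo:LMono} applied to $\Lt$ do not telescope directly. I would handle this by expanding the monotonicity integrand for $\Lt$, performing the change of variable $\bt \leftrightarrow \aa$ under $\mu_b^1$ and $\mu_b^2$ separately, and regrouping the four resulting terms so that the mismatched cross-contributions cancel thanks to the consistency relations $\mu_{\aa}^i = \phi_i \# \mu_b^i$, reducing to the original \ref{hypo:LMono} applied to the pair $(\mu_{\aa}^1, \mu_{\aa}^2)$. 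Once monotonicity is secured for $\Lt$, the existence theorem for \eqref{eq:MFGCrd} delivers $(u,m,\mu_b)$, and defining $\mu_{\aa}$ from $\mu_b$ via \eqref{eq:muaa} produces the desired solution of \eqref{eq:MFGCb}.
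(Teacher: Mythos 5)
Your overall route is exactly the one the paper follows in paragraph \ref{subsec:Exib}: the change of variable $\bt=b(t,x,\aa,\mu_{\aa})$ turns \eqref{eq:MFGCb} into an instance of \eqref{eq:MFGCrd} for the transformed Lagrangian $L^b$ (your $\Lt$); the equivalence of solutions is the content of Lemma \ref{lem:equisol}; and existence then follows from Corollary \ref{thm:ExiMonob}, i.e.\ from Theorem \ref{thm:Exird} applied to $L^b$, once one checks that $L^b$ inherits \ref{hypo:Lreg}--\ref{hypo:Lbound}. Your exponent bookkeeping (a new exponent $q'/q_0$ obtained by composing \ref{hypo:bbound} with \ref{hypo:Lcoer}--\ref{hypo:Lbound}) is consistent with what that verification requires.

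The one step of your sketch that does not go through as written is the transfer of the monotonicity \ref{hypo:LMono} to $L^b$. Set $\phi_i(x,\bt)=\lp x,\aa^*(t,x,\bt,\mu_b^i)\rp$, so that $\mu_{\aa}^i=\phi_i\#\mu_b^i$. Expanding $\int\lp L^b(\cdot,\mu_b^1)-L^b(\cdot,\mu_b^2)\rp d\lp\mu_b^1-\mu_b^2\rp$ into four terms, the two diagonal terms do become $\int L(\cdot,\mu_{\aa}^i)\,d\mu_{\aa}^i$ by the consistency relations, but the two cross terms become $\int L(\cdot,\mu_{\aa}^2)\,d\lp\phi_2\#\mu_b^1\rp$ and $\int L(\cdot,\mu_{\aa}^1)\,d\lp\phi_1\#\mu_b^2\rp$, and the relations $\mu_{\aa}^i=\phi_i\#\mu_b^i$ say nothing about $\phi_i\#\mu_b^j$ for $i\neq j$. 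The discrepancy between these cross terms and the ones appearing in \ref{hypo:LMono} for the pair $(\mu_{\aa}^1,\mu_{\aa}^2)$ is $\int\lb L(t,x,\aa^*(\cdot,\mu_b^1),\mu_{\aa}^2)-L(t,x,\aa^*(\cdot,\mu_b^2),\mu_{\aa}^2)\rb d\mu_b^1$ plus the symmetric term, which has no sign in general; the telescoping you describe only closes when $\aa^*$ does not depend on $\mu_b$ (equivalently, when $b$ does not depend on $\mu_{\aa}$). To be fair, the paper itself disposes of this verification with a single sentence asserting that it is straightforward, so you have correctly isolated the only delicate point of the reduction --- but the regrouping you propose does not by itself supply the missing argument, and as it stands this is a gap in your proof.
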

The existence of solutions of the MFGC system
is in general much more demanding than
for MFG systems without interactions through the controls.
Under monotonicity assumptions similar to \ref{hypo:LMono},
existence has been proved in \cite{MR3752669}
Section $4.6$,
for quadratic and uniform convex Lagrangians
with a growth condition on the derivatives of
the Hamiltonian.
In \cite{MR3805247},
the existence of weak solutions
of the monotonous MFGC system is discussed with
a possibly degenerate diffusion operator,
under assumptions which are uniform with respect
to the joint law of states and controls.

Here, we prove existence of solutions of
the monotonous MFGC system for a large class
of Lagrangians and the drifts.
Namely, we assume that the Lagrangians and drifts
behave asymptotically like a power of $\aa$;
we allow them to have a growth in
the law of the controls
of at most the same order as the order of dependency upon $\aa$.

Before starting the discussion on existence
of solutions to the MFGC systems
\eqref{eq:MFGCb}
and \eqref{eq:MFGCrd},
we introduce a new MFGC system
set in the torus,
so that the solutions should
have more compactness properties.
We define $\TT^d_a=\RR^d/\lp a\ZZ^d\rp$ the
$d$-dimensional torus of radius $a>0$.
Namely, we consider:
\begin{subequations}\label{eq:MFGCttda}
     \begin{empheq}{align}
    \label{eq:HJBttda}
    &-\ptt u(t,x)
    - \nu\Delta u(t,x)
    + H\lp t,x, \nabla_xu(t,x),\mu(t)\rp
    =
    f(t,x,m(t))
    &\text{ in }
    (0,T)\times\TT^d_a,\\
    \label{eq:FPKttda}
    & \ptt m(t,x)
    - \nu\Delta m(t,x)
    -\divo\lp H_p\lp t,x,\nabla_xu(t,x),\mu(t)\rp m\rp
    =
    0
    &\text{ in }
    (0,T)\times\TT^d_a,\\
    \label{eq:muttda}
    &\mu(t)
    = \Bigl(
    I_d,
    - H_p\lp t,\cdot,\nabla_xu(t,\cdot),\mu(t)\rp
    \Bigr){\#}m(t)
    &\text{ in } [0,T],\\
    \label{eq:CFuttda}
    &u(T,x)
    =
    g(x,m(T))
    &\text{ in }
    \TT^d_a,\\
    \label{eq:CImttda}
    &m(0,x)
    =
    m_0(x)
    &\text{ in }
    \TT^d_a.
    \end{empheq}
\end{subequations}
All the assumptions in paragraph \ref{subsec:hypoMono}
are stated in $\RR^d$.
When considering that
$L:[0,T]\times\TT^d_a\times\RR^d\times\cP\lp\TT^d_a\times\RR^d\rp\to\RR$
(like in \eqref{eq:MFGCttda})
satisfies one of those assumptions,
we shall simply replace $\RR^d$
by $\TT^d_a$ as the state set in the chosen assumption.

The fixed point satisfied by the joint law
of states and controls,
namely \eqref{eq:muaa}-\eqref{eq:mub},
\eqref{eq:murd} or \eqref{eq:muttda},
may be a difficult issue for MFGC systems.
Here, using mainly the monotonicity
assumption \ref{hypo:LMono}
and the compactness of the state space
of \eqref{eq:MFGCttda},
we prove
in Section \ref{sec:MonoFPV}
the following lemma
which states well-posedness for the fixed
point \eqref{eq:muttda},
and ensures continuity with respect to time.
\begin{lemma}
    \label{lem:FPmucont}
    Assume
    \ref{hypo:Lreg}-\ref{hypo:Lbound}.
    Let
    $p\in C^0\lp[0,T]\times\TT^d_a;\RR^d\rp$
    and
    $m\in C^0\lp[0,T];\cP\lp\TT^d_a\rp\rp$
    be such that
    $t\mapsto p(t,\cdot)$
    is continuous with respect to the topology
    of the local uniform convergence
    and $m(t)$ admits a finite second
    moment uniformly bounded with respect to $t\in[0,T]$.
    For any $t\in[0,T]$,
    there exists a unique $\mu(t)\in\cP\lp\TT^d_a\times\RR^d\rp$
    such that
    \begin{equation*}
        \mu(t)
        =
        \lp I_d,
        -H_p\lp t,\cdot,p(t,\cdot),\mu(t)\rp\rp\# m(t).
    \end{equation*}
    Moreover, the map $t\mapsto \mu(t)$
    is continuous where $\cP\lp\TT^d_a\times\RR^d\rp$ is
    equipped with the weak* topology.
\end{lemma}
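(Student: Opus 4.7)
The plan is to handle existence and uniqueness at fixed $t$ first, and then derive the continuity in $t$ by a compactness-plus-uniqueness argument.

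\textbf{Uniqueness at fixed $t$.} Suppose $\mu^1, \mu^2$ are two solutions and set $\alpha^i(x) = -H_p(t,x,p(t,x),\mu^i)$, so that $\mu^i = (I_d,\alpha^i)\# m(t)$ and $\alpha^i(x)$ is the unique minimizer of $a \mapsto L(t,x,a,\mu^i) + p(t,x)\cdot a$ by \ref{hypo:Lbconv}. Testing each minimizer inequality against the other candidate $\alpha^{3-i}(x)$ and summing yields the pointwise inequality $L(t,x,\alpha^1,\mu^1) + L(t,x,\alpha^2,\mu^2) \leq L(t,x,\alpha^2,\mu^1) + L(t,x,\alpha^1,\mu^2)$, which after integration against $m(t)(dx)$ reads
\[
\int \bigl[L(t,x,\alpha,\mu^1) - L(t,x,\alpha,\mu^2)\bigr]\, d(\mu^1-\mu^2)(x,\alpha) \leq 0.
\]
Since \ref{hypo:LMono} gives the reverse inequality, equality must hold pointwise; the strict convexity of $L(t,x,\cdot,\mu^1)$ (again \ref{hypo:Lbconv}) then forces $\alpha^1 = \alpha^2$ $m(t)$-a.e., so $\mu^1 = \mu^2$.

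\textbf{A priori bound and existence.} Applying \ref{hypo:LMono} with the comparison measure $\mu^0 = (I_d, 0)\# m(t)$ and then using the optimality $L(t,x,\alpha,\mu) + p\cdot\alpha \leq L(t,x,0,\mu)$ for any putative solution $\mu = (I_d,\alpha)\# m(t)$ yields
\[
\int L(t,x,\alpha(x),\mu^0)\, dm(t)(x) \leq \int \bigl[L(t,x,0,\mu^0) - p(t,x)\cdot\alpha(x)\bigr]\, dm(t)(x).
\]
Using \ref{hypo:Lcoer}-\ref{hypo:Lbound} (with $\LL_{q'}(\mu^0) = 0$) and Young's inequality bounds $\LL_{q'}(\mu)$ in terms of $\|p(t,\cdot)\|_\infty$; reinjecting into the pointwise optimality then gives $\LL_\infty(\mu) \leq R$, with $R$ depending only on $\|p(t,\cdot)\|_\infty$ and the uniform second-moment control on $m$. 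With the a priori estimate in hand, existence follows from a Leray-Schauder argument applied to the continuation family $F_\lambda(\alpha) = -H^\lambda_p(t,\cdot,p(t,\cdot),(I_d,\alpha)\# m(t))$ built from the interpolated Lagrangian $L^\lambda = \lambda L(\cdot,\mu) + (1-\lambda) L(\cdot,\mu^0)$: monotonicity (and hence the same a priori bound) is preserved for every $\lambda \in [0,1]$, compactness of $F_\lambda$ in $C(\TT^d_a;\RR^d)$ follows from \ref{hypo:Lreg} and the uniform support bound, and at $\lambda = 0$ the map is constant in $\mu$ so has a trivial fixed point. To obtain the openness required by the continuation I first run the argument for $L_\epsilon = L + \tfrac{\epsilon}{2}|\alpha|^2$ (which preserves \ref{hypo:LMono} and is uniformly convex in $\alpha$), and then pass to a weak-$*$ limit of the solutions $\mu^\epsilon$ as $\epsilon \to 0$, using the $\epsilon$-uniform a priori bound and the continuity of $H_p$ on $\cP_{\infty,R}$ from \ref{hypo:Lreg} to recover a solution of the original equation.

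\textbf{Continuity in $t$ and main obstacle.} For $t_n \to t$, the a priori estimate is uniform in $n$ (since $\|p(t_n,\cdot)\|_\infty$ and the second moments of $m(t_n)$ are uniformly controlled by hypothesis), so $\LL_\infty(\mu(t_n)) \leq R$; hence $\{\mu(t_n)\}$ is supported in the fixed compact $\TT^d_a \times \overline{B(0,R)}$ and is weak-$*$ relatively compact by Prokhorov. Any weak-$*$ limit $\nu$ of a subsequence is a fixed point at $t$: one passes to the limit in $\int \phi\, d\mu(t_n) = \int \phi(x,-H_p(t_n,x,p(t_n,x),\mu(t_n)))\, dm(t_n)(x)$ for $\phi \in C_b(\TT^d_a \times \RR^d)$, using $m(t_n) \to m(t)$ weak-$*$, $p(t_n,\cdot) \to p(t,\cdot)$ locally uniformly, and continuity of $H_p$ on $\cP_{\infty,R}$ from \ref{hypo:Lreg}. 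By the uniqueness just established, $\nu = \mu(t)$, and the full sequence converges. The main obstacle is the existence step at fixed $t$ — specifically the openness in the continuation scheme, which is delicate because \ref{hypo:Lbconv} only yields strict (not uniform) convexity of $L$ in $\alpha$; uniqueness and continuity are, by comparison, clean consequences of monotonicity combined with the a priori support bound.
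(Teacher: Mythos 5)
Your proof is correct, and its skeleton — a priori support bound obtained by testing \ref{hypo:LMono} against $m(t)\otimes\delta_0$, a homotopy/fixed-point argument for existence, monotonicity plus strict convexity for uniqueness, and compactness-plus-uniqueness for continuity in $t$ — is the same as the paper's (Lemmas \ref{lem:apriorimu}, \ref{lem:FPmuLS} and \ref{lem:contmu}). The one place you genuinely diverge is the existence step, and there your route is more complicated than necessary. The ``openness'' you worry about is not required by the Leray--Schauder fixed point theorem as stated in Theorem \ref{thm:Leray}: that theorem only needs (i) compactness of the homotopy $(\lambda,\alpha)\mapsto\Psi(\lambda,\alpha)$, (ii) a uniform a priori bound on all fixed points of $\Psi(\lambda,\cdot)$, and (iii) $\Psi(0,\cdot)=0$; no invertibility of a linearization, hence no uniform convexity, is ever used. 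Compactness of the map does not need uniform convexity either — it follows from the continuity of $H_p$ on $[0,T]\times\TT^d_a\times\RR^d\times\cP_{\infty,R}$ (Lemma \ref{lem:Hbound}), Heine's theorem on the compact set $[0,1]\times\TT^d_a\times B(0,R)\times\cP_{\infty,R}$, and Arzel\`a--Ascoli, which is exactly how the paper argues. Consequently your regularization $L_\epsilon=L+\frac{\epsilon}{2}|\alpha|^2$ followed by a weak-$*$ limit as $\epsilon\to0$ is a valid but superfluous detour (it is, amusingly, the strategy of an earlier draft of this proof). Two smaller points: the paper's homotopy interpolates $L$ with the explicit reference Lagrangian $\frac{|\alpha|^{q'}}{q'}-\alpha\cdot p(x)$, whose Hamiltonian is $\frac1q|p-p(x)|^q$, precisely so that the endpoint map is identically zero as Theorem \ref{thm:Leray} demands; your endpoint $\lambda=0$ gives a nonzero constant map, which is harmless but requires either a translation or a version of the theorem admitting constant endpoint maps. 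And your derivation of the $\LL_{q'}$ bound from the optimality inequality $L(t,x,\alpha,\mu)+p\cdot\alpha\leq L(t,x,0,\mu)$ is a clean variant of the paper's, which instead uses the convexity inequality $L(t,x,\alpha,\mu)-L(t,x,0,\mu)\leq\alpha\cdot L_\alpha(t,x,\alpha,\mu)=-\alpha\cdot p(x)$; both give \eqref{eq:aprioriLq'}.
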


The next step in our strategy for proving existence
is to look for a priori estimates for the solutions
of the MFGC systems and obtain compactness results
to use a fixed point theorem.
In section \ref{sec:aprioriMono},
we prove the a priori estimates
stated in the following lemma
for solutions to \eqref{eq:MFGCttda}.
\begin{lemma}
    \label{lem:Monoapriorith1}
    Assume
    \ref{hypo:Lreg}-\ref{hypo:Monogregx}.
    If $(u,m,\mu)$ is a solution to \eqref{eq:MFGCttda},
    then
    $\norminf{u}$,
    $\norminf{\nabla_xu}$ and
    $\displaystyle{\sup_{t\in[0,T]}} \LL_{\infty}\lp\mu(t)\rp$
    are uniformly bounded by a constant
    independent of $a$.
\end{lemma}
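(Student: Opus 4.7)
The plan is to establish the three bounds sequentially, the monotonicity \ref{hypo:LMono} being the crux of the first step. First, I would derive the energy identity by testing the HJB equation \eqref{eq:HJBttda} against $m$ and the FPK equation \eqref{eq:FPKttda} against $u$, adding the results and exploiting the cancellations after integration by parts together with the Legendre identity $H(p,\mu) - p \cdot H_p(p,\mu) = -L(\alpha^*,\mu)$ coming from \ref{hypo:Lbconv}. This yields
\begin{equation*}
\int_{\TT^d_a} u(0,x)\,m_0(x)\,dx - \int_{\TT^d_a} g(x,m(T))\,m(T,x)\,dx = \int_0^T \!\! \int_{\TT^d_a} \bigl(L(t,x,\alpha^*,\mu(t)) + f(t,x,m(t))\bigr) m(t,x)\,dx\,dt.
\end{equation*}
The stochastic representation of $u$, applied to the trivial feedback $\aa \equiv 0$ and combined with \ref{hypo:Lbound} and \ref{hypo:Monogregx}, provides the upper bound $u(t,x) \leq C(1 + \int_t^T \LL_{q'}(\mu(s))^{q'}\,ds)$. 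For the complementary lower estimate on $\int\!\!\int m\,L(\alpha^*,\mu)$, I would invoke \ref{hypo:LMono} against a well-chosen fixed reference measure $\mu_0$ with bounded $\LL_\infty$, together with \ref{hypo:Lcoer}, to produce a bound of the form $\int m\,L(\alpha^*,\mu)\,dx \geq c\,\LL_{q'}(\mu(t))^{q'} - C$. Closing the resulting inequality gives the key integral estimate $\int_0^T \LL_{q'}(\mu(t))^{q'}\,dt \leq C$, with $C$ independent of $a$.

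Once this integrability is known, the two-sided pointwise bound $|u(t,x)| \leq C(1 + \int_t^T \LL_{q'}(\mu(s))^{q'}\,ds)$ from \ref{hypo:Lcoer}-\ref{hypo:Lbound} via the stochastic representation immediately yields $\|u\|_{\infty} \leq C$. The bound on $\|\nabla_x u\|_{\infty}$ requires more work: the HJB \eqref{eq:HJBttda} is a semilinear parabolic equation with a $q$-growth Hamiltonian, source $f(\cdot, m(t))$ bounded in $C^1_x$ and terminal data $g(\cdot, m(T))$ bounded in $C^{2+\bb_0}$, by \ref{hypo:Monogregx}. I would apply a Bernstein-type gradient estimate or the classical gradient estimates for HJB with superquadratic Hamiltonians; a short parabolic bootstrap may first be needed to upgrade the $L^1_t$ integrability of the $\mu$-dependent coefficients into pointwise-in-time bounds.

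Finally, the uniform bound on $\LL_\infty(\mu(t))$ comes from the Legendre structure. The Fenchel-Young identity at the optimum reads $L(\alpha^*,\mu) + \nabla_x u \cdot \alpha^* = -H(\nabla_x u, \mu) \leq L(0, \mu)$; together with \ref{hypo:Lcoer} and \ref{hypo:Lbound} it gives the pointwise inequality $|\alpha^*(t,x)|^{q'} \leq C(1 + |\nabla_x u(t,x)|^q + \LL_{q'}(\mu(t))^{q'})$. Combined with $\|\nabla_x u\|_{\infty} \leq C$ and pointwise-in-$t$ control of $\LL_{q'}(\mu(t))^{q'}$ (obtained from the integral bound together with the time-regularity furnished by Lemma \ref{lem:FPmucont}, or by further bootstrapping), this yields $\sup_t \LL_\infty(\mu(t)) \leq C$. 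The main obstacle lies in the first paragraph: extracting from \ref{hypo:LMono} a quantitative lower bound on $\int L(\alpha,\mu)\,d\mu$ that actually dominates the $\LL_{q'}(\mu)^{q'}$ growth permitted by \ref{hypo:Lbound}, while ensuring all constants are independent of the torus radius $a$.
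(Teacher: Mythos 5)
Your four-stage architecture (integral bound on $\LL_{q'}(\mu)^{q'}$ via optimality plus monotonicity, then $\norminf{u}$, then a Bernstein gradient bound, then the pointwise bound on $\LL_{\infty}(\mu(t))$) is exactly the paper's, which proves the $\th$-dependent generalization in Lemma \ref{lem:Monoapriori}. But the step you yourself flag as ``the main obstacle'' is a genuine gap, and the route you sketch for it does not close. With any reference measure of the form $\mu_0=m'\otimes\delta_0$, Assumption \ref{hypo:LMono} gives
$\int L(\cdot,\mu)\,d\mu\geq \int L(\cdot,\mu_0)\,d\mu+\int L(\cdot,\mu)\,d\mu_0-\int L(\cdot,\mu_0)\,d\mu_0$;
the first term on the right is indeed $\geq C_0^{-1}\LL_{q'}(\mu)^{q'}-C$ by \ref{hypo:Lcoer}, but the middle term equals $\int L(t,x,0,\mu)\,dm'(x)$, which \ref{hypo:Lcoer} only bounds below by $-C_0\lp1+\LL_{q'}(\mu)^{q'}\rp$. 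Since $C_0\geq C_0^{-1}$, the coefficient of $\LL_{q'}(\mu)^{q'}$ in your lower bound is nonpositive and the estimate collapses. Even granting a lower bound with constant $C_0^{-1}$, matching it against the zero-control upper bound $u(0,\cdot)\leq C_0\lp 1+\int_0^T\LL_{q'}(\mu)^{q'}\rp$ would still not close, because the constants compete the wrong way.

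The paper's resolution is to lower-bound a \emph{different} quantity. Take $\tilde\mu(t)=\tilde m(t)\otimes\delta_0$ with $\tilde m(t)$ the law of the uncontrolled diffusion. Optimality of the control (your energy identity plus the zero feedback) gives $\int_0^T\int L(\cdot,\mu)\,d\mu\,dt\leq \int_0^T\int L(\cdot,\mu)\,d\tilde\mu\,dt+C$, while \ref{hypo:LMono} gives $\int L(\cdot,\tilde\mu)\,d\mu\leq \int L(\cdot,\mu)\,d\mu-\int L(\cdot,\mu)\,d\tilde\mu+\int L(\cdot,\tilde\mu)\,d\tilde\mu$. Adding, the two terms involving $L(\cdot,\mu)$ cancel, and $\int L(\cdot,\tilde\mu)\,d\tilde\mu=\int L(t,x,0,\tilde\mu)\,d\tilde m\leq C_0$ by \ref{hypo:Lbound} because $\LL_{q'}(\tilde\mu)=0$. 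One is left with $\int_0^T\int L(t,x,\aa,\tilde\mu(t))\,d\mu(t)\,dt\leq C$ with \emph{no} $\LL_{q'}(\mu)$ on the right, and coercivity applied with the measure argument $\tilde\mu$ (so again $\LL_{q'}(\tilde\mu)=0$) yields $\int_0^T\LL_{q'}(\mu(t))^{q'}dt\leq C$. A secondary issue is your Bernstein step: bootstrapping to pointwise-in-time control of $\LL_{q'}(\mu(t))$ before the gradient bound is circular, since that pointwise control (Lemma \ref{lem:apriorimu}) requires $\norm{\nabla_xu(t)}{L^q(m(t))}$. The paper instead inserts $\int_{T-t}^{T}\LL_{q'}(\mu(s))^{q'}ds$ into the argument of the exponential weight $\vp$ in \eqref{eq:defphiw}, so that the $L^1_t$ bound from the first step is enough to absorb the $\LL_{q'}(\mu)^{q'}$ terms; only afterwards is $\sup_t\LL_{\infty}(\mu(t))$ recovered, exactly as in your last paragraph.
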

Let us mention that the a priori estimates of
Lemma \ref{lem:Monoapriorith1} rely on the monotonicity
assumption on $L$ and a Bernstein method
introduced in \cite{2019arXiv190411292K}.
To our knowledge, these are the first results
in the literature of MFGC which use
the monotonicity assumption
for getting a priori estimates.
They are 
the key ingredients of the proof of the
existence of solutions to \eqref{eq:MFGCttda}
in the following theorem,
proved in paragraph \ref{subsec:Exittda}.
\begin{theorem}
    \label{thm:ExiMonottda}
    Under assumptions
    \ref{hypo:Lreg}-\ref{hypo:Monogregx},
    there exists a solution to system
    \eqref{eq:MFGCttda}.
\end{theorem}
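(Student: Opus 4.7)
The plan is to apply the Leray--Schauder fixed point theorem to a compact continuous self-map $\Phi$ on a well chosen Banach space, using Lemma \ref{lem:FPmucont} to close the fixed point relation on $\mu$ and Lemma \ref{lem:Monoapriorith1} to control fixed points.

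Fix $\bb\in(0,\bb_0)$ and work in the Banach space $X=C^{0,1}\lp[0,T]\fois\TT^d_a;\RR\rp\fois C^{0}\lp[0,T];\cP\lp\TT^d_a\rp\rp$, where $\cP(\TT^d_a)$ is endowed with the Kantorovich--Rubinstein distance (metrizing the weak* topology). Given $(u,m)\in X$, I would construct $\Phi(u,m)=(\ut,\mt)$ in three steps: (i) apply Lemma \ref{lem:FPmucont} with $p=\nabla_xu$ to obtain the unique continuous curve $\mu:[0,T]\to\cP(\TT^d_a\fois\RR^d)$ solving \eqref{eq:muttda} for the given $(u,m)$; (ii) solve the linear Fokker--Planck equation \eqref{eq:FPKttda} forward in time, with drift $-H_p(t,\cdot,\nabla_xu,\mu(t))$ and initial datum $m_0$, producing $\mt$; (iii) solve the backward quasilinear Hamilton--Jacobi--Bellman equation \eqref{eq:HJBttda} with nonlinearity $H(t,\cdot,\nabla_x\ut,\mu(t))$, source $f(t,\cdot,\mt(t))$ and terminal datum $g(\cdot,\mt(T))$, producing $\ut$. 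Any fixed point of $\Phi$ is by construction a solution of \eqref{eq:MFGCttda}.

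Next, I would check that $\Phi$ is continuous and compact on $X$. Continuity of the step $(u,m)\mapsto\mu$ is exactly the content of the last statement of Lemma \ref{lem:FPmucont}; the PDE steps (ii)--(iii) are standard linear and quasilinear parabolic problems whose solutions depend continuously on the data by classical stability estimates. Compactness comes from parabolic Schauder estimates: as soon as $\nabla_xu$ and $\LL_\infty(\mu)$ are controlled, the coefficients of the FPK and HJB equations are uniformly bounded and Hölder continuous, so that $\mt\in C^{\bb/2,\bb}$ and $\ut\in C^{(1+\bb)/2,1+\bb}$, and these spaces embed compactly into the respective factors of $X$.

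Finally, I would apply Leray--Schauder through a homotopy $\{\Phi_\sigma\}_{\sigma\in[0,1]}$ that deforms $\Phi=\Phi_1$ into a trivial problem at $\sigma=0$, typically by multiplying the couplings $f$ and $g$ by $\sigma$ and convexly interpolating the $\mu$-dependence of $L$ between the full Lagrangian and a reference $L(t,x,\aa,\mu_0)$ for some fixed $\mu_0$. Since \ref{hypo:LMono} is stable under convex combinations and the remaining assumptions hold with constants independent of $\sigma$, Lemma \ref{lem:Monoapriorith1} applies uniformly and provides a uniform bound on $\norminf{u}+\norminf{\nabla_xu}+\sup_t\LL_\infty(\mu(t))$ for every fixed point of every $\Phi_\sigma$; combined with parabolic regularity this upgrades to an a priori bound in $X$, which is the ingredient Leray--Schauder needs to deliver a fixed point of $\Phi_1$. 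The main difficulty I anticipate is the design of the homotopy: it must preserve the monotonicity assumption that drives Lemma \ref{lem:Monoapriorith1} throughout the deformation, while remaining simple enough that existence at $\sigma=0$ is essentially classical; once this is done, everything else reduces to standard parabolic machinery together with the a priori bounds of Lemma \ref{lem:Monoapriorith1}.
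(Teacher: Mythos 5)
Your overall architecture (a fixed point map built from Lemma \ref{lem:FPmucont}, the FPK step, the HJB step, compactness from parabolic Schauder estimates, and Leray--Schauder closed by the monotonicity-based a priori bounds) is exactly the paper's strategy, but two specific steps fail as written. First, the space $X=C^{0,1}\lp[0,T]\times\TT^d_a;\RR\rp\times C^{0}\lp[0,T];\cP\lp\TT^d_a\rp\rp$ is not a Banach space: $\cP\lp\TT^d_a\rp$ with the Kantorovich--Rubinstein distance is only a compact metric space, not a normed vector space, so the Leray--Schauder theorem (Theorem \ref{thm:Leray}, which is stated for a compact mapping of a Banach space into itself) does not apply on $X$. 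The paper flags precisely this obstruction and circumvents it by running the fixed point in the function variable: it takes $\mt\in C^0\lp[0,T]\times\TT^d_a;\RR\rp$, produces a genuine probability density via the normalization map $\rho$ and the auxiliary heat flow $\mt^0$, and defines $\Psi\lp\th,u,\mt\rp=\lp\uo,\mo-\mt^0\rp$ on the Banach space $C^{0,1}\times C^0$ of functions. You need some such lifting (or a different fixed point theorem on a convex compact subset of a locally convex space, with the attendant verifications) before your argument can start.

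Second, the version of Leray--Schauder used here requires $\Psi(0,x)=0$ for \emph{all} $x$, i.e.\ the map at the bottom of the homotopy must be identically zero, not merely ``essentially classical.'' Your deformation (multiply $f,g$ by $\sigma$ and interpolate the $\mu$-dependence of $L$ towards a frozen $L(t,x,\aa,\mu_0)$) leaves a nontrivial, non-constant map at $\sigma=0$: the FPK output still depends on the input $u$ through the drift $-H_p(t,\cdot,\nabla_xu,\mu_0)$. To make this rigorous you would need a degree-theoretic formulation and a computation of the degree at $\sigma=0$, which you do not supply. The paper instead rescales the whole Lagrangian, $L^{\th}\lp t,x,\aa,\mu\rp=\th L\lp t,x,\th^{-1}\aa,\Th(\mu)\rp$ with data $\th f$, $\th g$, so that $H^{0}=0$ and the map genuinely vanishes at $\th=0$; this also forces the a priori estimates to be proved for the whole family (Lemma \ref{lem:Monoapriori}, with bounds proportional to powers of $\th$), not just for $\th=1$ as in Lemma \ref{lem:Monoapriorith1}, which is the quantitative input your uniform-bound step implicitly relies on. Your observation that \ref{hypo:LMono} survives convex interpolation is correct, but it does not by itself repair either of these two points.
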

Therefore,
for any $a>0$ we can construct a solution
to \eqref{eq:MFGCttda} which satisfies
uniform estimates with respect to $a$.
This allows us to construct a compact sequence of
approximating solutions to \eqref{eq:MFGCrd}.
Passing to the limit for a subsequence allows us 
to generalize the conclusion
of Theorem \ref{thm:ExiMonottda}
to system \eqref{eq:MFGCrd}.
This leads to the following theorem
proved in paragraph \ref{subsec:rd}.
\begin{theorem}
    \label{thm:Exird}
    Under assumptions
    \ref{hypo:Lreg}-\ref{hypo:Monogregx},
    there exists a solution to
    \eqref{eq:MFGCrd}.
\end{theorem}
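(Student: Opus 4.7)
The plan is to construct a compact family of approximate solutions posed on tori $\TT^d_a$ of increasing radius, and then extract a limit as $a\to\infty$ that solves \eqref{eq:MFGCrd}. First, for each $a>0$ large enough, I would build Lagrangians $L^a$, coupling functions $f^a$, terminal costs $g^a$ and initial measures $m^a_0$ on $\TT^d_a$ coinciding with the original data on the ball $B(0,a/2)\subset\RR^d$, periodic on $\TT^d_a$, still satisfying the growth bounds \ref{hypo:Lcoer}--\ref{hypo:Lbound} and the regularity in \ref{hypo:Lreg}--\ref{hypo:Lbconv} uniformly in $a$, and crucially preserving the monotonicity condition \ref{hypo:LMono}. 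The natural strategy is a symmetrization: reflect $L$ through the hyperplanes bordering a fundamental domain and average, so that the integral identity in \ref{hypo:LMono} is preserved by linearity of the construction in $L$. This is the most delicate part of the argument because monotonicity is a global condition on the joint law $\mu$.

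Second, for a sequence $a_n\to\infty$, I would invoke Theorem \ref{thm:ExiMonottda} to obtain solutions $(u_{a_n},m_{a_n},\mu_{a_n})$ to the system \eqref{eq:MFGCttda} with data $(L^{a_n},f^{a_n},g^{a_n},m^{a_n}_0)$. By Lemma \ref{lem:Monoapriorith1}, the sup-norms of $u_{a_n}$ and $\nabla_x u_{a_n}$ as well as $\sup_t\LL_\infty(\mu_{a_n}(t))$ are bounded by a constant $R$ independent of $n$; one must check that this constant depends only on the original data, not on the periodization, which is the reason the growth and monotonicity constants of $L^{a_n}$ should match those of $L$. Standard linear parabolic Schauder theory applied to \eqref{eq:HJBttda} with right-hand side $f^{a_n}-H(t,x,\nabla_x u_{a_n},\mu_{a_n})$ and to \eqref{eq:FPKttda} with drift $-H_p(t,x,\nabla_x u_{a_n},\mu_{a_n})$ then upgrades these bounds into uniform Hölder estimates for $u_{a_n}$ in $C^{\frac{1+\bb_0}{2},1+\bb_0}$ and for $m_{a_n}$ in $C^{\frac{\bb_0}{2},\bb_0}$ on compact subsets of $[0,T]\times\RR^d$.

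Third, after viewing $(u_{a_n},m_{a_n},\mu_{a_n})$ as objects on $\RR^d$ via the covering map interpreted on a fundamental domain containing $B(0,a_n/2)$, I would apply Arzelà--Ascoli on compacts together with a diagonal extraction to obtain a subsequence (not relabeled) along which $u_{a_n}\to u$ in $C^{0,1}_{\mathrm{loc}}$, $m_{a_n}(t)\rightharpoonup m(t)$ narrowly on $\RR^d$ for each $t\in[0,T]$, and $\mu_{a_n}(t)\rightharpoonup\mu(t)$ weakly$^*$ in $\cP(\RR^d\times\RR^d)$. Tightness of $m_{a_n}(t)$ follows from the uniform bound on the second moment of $m_0$ in \ref{hypo:Monogregx} combined with the uniform drift bound propagated along \eqref{eq:FPKttda}, and tightness of $\mu_{a_n}(t)$ then follows from tightness of its first marginal and the uniform support bound $\LL_\infty(\mu_{a_n}(t))\le R$ on its second marginal.

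Finally, I would pass to the limit in each equation: the continuity of $H$ and $H_p$ on $[0,T]\times\RR^d\times\RR^d\times\cP_{\infty,R}(\RR^d\times\RR^d)$ provided by \ref{hypo:Lreg}--\ref{hypo:Lbconv} allows passing to the limit in \eqref{eq:HJBrd}, \eqref{eq:FPKrd} and \eqref{eq:murd} in the distributional sense; the initial condition \eqref{eq:CImrd} passes by locally uniform convergence of $m^{a_n}_0$ to $m_0$, and the terminal condition \eqref{eq:CFurd} by continuity of $g(\cdot,m)$ in $m$. The main obstacles, beyond constructing the periodization while preserving both monotonicity and the growth bounds, are verifying that the constants in Lemma \ref{lem:Monoapriorith1} are insensitive to the periodization and that the fixed-point relation \eqref{eq:murd}, being nonlocal and implicit, genuinely passes to the limit under mere weak$^*$ convergence of $\mu_{a_n}$; the latter relies on Lemma \ref{lem:FPmucont} and the uniform convergence of $\nabla_x u_{a_n}$ on compacts.
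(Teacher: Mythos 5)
Your overall architecture is exactly the paper's: periodize the data onto $\TT^d_a$, solve there via Theorem \ref{thm:ExiMonottda}, use the $a$-uniform bounds of Lemma \ref{lem:Monoapriorith1} plus local Schauder theory, and extract a limit by Arzel\`a--Ascoli and diagonal extraction. The second moment/tightness argument for $m^{a}(t)$, the local $C^{\frac{1+\bb}2,1+\bb}$ upgrade away from the gluing region, and the passage to the limit in the fixed point \eqref{eq:murd} via uniqueness of that fixed point are all present in the paper in essentially the form you describe.

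The genuine gap is in the periodization step, which you correctly identify as the delicate point but then resolve with a mechanism that does not work as stated. First, ``reflect through the hyperplanes bordering a fundamental domain and average'' produces a function that is an average of $L$ with its reflected copies, and such an average does not coincide with $L$ on $B(0,a/2)$; if instead you mean pure even reflection (no averaging), the extension is generically only Lipschitz across the reflection hyperplanes, so \ref{hypo:Lreg} (differentiability in $x$, continuity of $L_x$) is lost. Second, and more fundamentally, \ref{hypo:LMono} is a statement about measures $\mu^1,\mu^2$ on the \emph{torus}, so any periodization must specify how the measure argument of $L$ transforms; linearity of the monotonicity inequality in $L$ is of no help until this is fixed, because a reflected copy $L(t,\sigma(x),\aa,\mu)$ with $\mu$ left untouched need not be monotone. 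The paper's device handles all of this at once: it chooses a single $C^2$ periodic map $\psi^a\circ\pi^a:\RR^d\to\RR^d$ equal to the identity on $B(0,a/4)$ and defines $L^a(t,x,\aa,\mu)=L\bigl(t,\psi^a(x),\aa,(\psi^a\otimes I_d)\#\mu\bigr)$ (similarly $f^a$, $g^a$, and $m^{0,a}=\pi^a\#m^0$). Monotonicity is then preserved \emph{exactly}, by the change of variables $\nu^i=(\psi^a\otimes I_d)\#\mu^i$ in the defining integral, regularity in $x$ is preserved because $\psi^a$ is $C^2$, and the growth constants become $C_0\norminf{\psit'}$, independent of $a$. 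If you replace your reflect-and-average step by this pullback construction, the rest of your argument goes through as in the paper.
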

Uniqueness relies on the monotonicity
assumptions \ref{hypo:LMono}
and \ref{hypo:gMono},
the following theorem is proved
in paragraph \ref{subsec:UniMono}.
\begin{theorem}
    \label{thm:UniMono}
    Under assumptions
    \ref{hypo:Lreg}-\ref{hypo:LMono}
    and
    \ref{hypo:gMono},
    there is at most one solution
    to
    \eqref{eq:MFGCrd}
    or~\eqref{eq:MFGCttda}.
\end{theorem}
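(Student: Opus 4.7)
The plan is to adapt the classical Lasry--Lions uniqueness argument to the MFGC setting, using the Legendre duality between $H$ and $L$ to turn the required monotonicity of $H$ into the assumed monotonicity \ref{hypo:LMono} of $L$. Let $(u^1, m^1, \mu^1)$ and $(u^2, m^2, \mu^2)$ be two solutions of \eqref{eq:MFGCrd}; the case \eqref{eq:MFGCttda} is identical on $\TT^d_a$. Write $w = u^1 - u^2$, $H^i = H(t,x,\nabla_x u^i, \mu^i)$, $(H_p)^i = H_p(t,x,\nabla_x u^i, \mu^i)$, and denote by $\alpha^i = -(H_p)^i$ the unique maximizer in \eqref{eq:defHrd} provided by \ref{hypo:Lbconv}, so that $\mu^i = (I_d, \alpha^i)\# m^i$.

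First I would compute $\partial_t[(m^1-m^2)w]$ using the HJB and FPK equations for $u^i$ and $m^i$, integrate over $[0,T]\times\RR^d$ and integrate by parts in space. The second-order diffusion terms cancel, the $t=0$ boundary term vanishes by \eqref{eq:CImrd}, and the $t=T$ boundary term yields, via \eqref{eq:CFurd}, the contribution $\int (g(\cdot,m^1(T))-g(\cdot,m^2(T)))(m^1(T)-m^2(T))\,dx$. The resulting identity reads $A = R$, where
\begin{equation*}
    A := \int_0^T\!\!\int \Bigl\{(m^1-m^2)(H^1-H^2) - \nabla_x w\cdot\bigl[(H_p)^1 m^1-(H_p)^2 m^2\bigr]\Bigr\}\,dx\,dt,
\end{equation*}
and $R$ collects the $g$- and $f$-contributions, so that $R\geq 0$ by \ref{hypo:gMono}.

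The crux is to show $A\leq 0$. Regrouping, $A=\int_0^T\!\!\int m^1[H^1-H^2-(\nabla_x u^1-\nabla_x u^2)\cdot(H_p)^1] + \int_0^T\!\!\int m^2[\text{symmetric}]$. Substituting $H^i = -\nabla_x u^i\cdot\alpha^i-L(t,x,\alpha^i,\mu^i)$ from \eqref{eq:defHrd}, the bracket simplifies to $\nabla_x u^2\cdot(\alpha^2-\alpha^1)+L(t,x,\alpha^2,\mu^2)-L(t,x,\alpha^1,\mu^1)$. The optimality of $\alpha^2$ in $H(t,x,\nabla_x u^2,\mu^2)$ yields $\nabla_x u^2\cdot(\alpha^2-\alpha^1)\leq L(t,x,\alpha^1,\mu^2)-L(t,x,\alpha^2,\mu^2)$, so the bracket is at most $L(t,x,\alpha^1,\mu^2)-L(t,x,\alpha^1,\mu^1)$. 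Treating the symmetric term the same way and rewriting the $x$-integrals as integrals on $\RR^d\times\RR^d$ against $\mu^i=(I_d,\alpha^i)\# m^i$, one obtains
\begin{equation*}
    A \leq -\int_0^T\int_{\RR^d\times\RR^d}\bigl[L(t,x,\alpha,\mu^1)-L(t,x,\alpha,\mu^2)\bigr]\,d(\mu^1-\mu^2)(x,\alpha)\,dt \leq 0
\end{equation*}
by \ref{hypo:LMono}. Together with $R\geq 0$ this forces $A=R=0$ and every intermediate inequality to be an equality; equality in the Legendre step means that $\alpha^1(t,x)$ also attains the supremum defining $H(t,x,\nabla_x u^2(t,x),\mu^2(t))$, so the uniqueness part of \ref{hypo:Lbconv} gives $\alpha^1=\alpha^2$ both $m^1$-a.e.\ and $m^2$-a.e.

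To conclude, the strict positivity of $m^i$ on $(0,T)\times\RR^d$ (from the nondegenerate diffusion $\nu>0$ and parabolic regularity applied to the linear FPK equation with bounded drift) upgrades this to the pointwise identity $(H_p)^1=(H_p)^2$ on $(0,T)\times\RR^d$. Hence $m^1$ and $m^2$ solve the same linear Fokker--Planck equation with the common initial datum $m_0$, so $m^1=m^2$ by uniqueness; then $\mu^1=\mu^2$, and $u^1,u^2$ solve the same backward HJB equation with common terminal condition $g(\cdot,m^1(T))$, forcing $u^1=u^2$. The main obstacle I anticipate is rigorously justifying the integration by parts in the whole-space setting, where $m^i$ is only a probability density and the products such as $m^i\Delta w$ or $w\,\divo((H_p)^i m^i)$ are a priori only distributions; this will need to be handled using the regularity built into Definition \ref{def:sol}, the second-moment bound provided by \ref{hypo:Monogregx}, and a spatial cutoff procedure. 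No such difficulty arises for \eqref{eq:MFGCttda}, where the argument is completely clean.
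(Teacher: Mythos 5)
Your proof is correct and follows essentially the same route as the paper's: the crossed Lasry--Lions energy identity, reduction of the Hamiltonian brackets to Lagrangian differences via Legendre duality, the monotonicity \ref{hypo:LMono} to force all inequalities into equalities, and the uniqueness of the maximizer (\ref{hypo:Lbconv}) to identify the optimal controls, followed by uniqueness for the linear Fokker--Planck and HJB equations. The only cosmetic difference is that you phrase the key pointwise inequality as suboptimality of $\alpha^1$ in the supremum defining $H(t,x,\nabla_x u^2,\mu^2)$, whereas the paper writes the equivalent convexity inequality $L(t,x,\alpha^1,\mu^2)-L(t,x,\alpha^2,\mu^2)-(\alpha^1-\alpha^2)\cdot L_{\alpha}(t,x,\alpha^2,\mu^2)\geq 0$ using the conjugacy relation $\nabla_x u^2=-L_{\alpha}(t,x,\alpha^2,\mu^2)$.
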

The idea to pass from \eqref{eq:MFGCrd} to \eqref{eq:MFGCb},
is to change the optimization problem
in $\aa$ into a new optimization problem
expressed in term of $b$.
In paragraph \ref{subsec:Exib},
we prove the equivalence between the
solutions of these two
optimization problems.
A first existence results for
\eqref{eq:MFGCb} is stated in
Corollary \ref{thm:ExiMonob}
which uses this equivalence.
Theorem \ref{thm:bpower} is a consequence
of Corollary \ref{thm:ExiMonob}
with more tractable assumptions.
Let us mention that for proving Theorem
\ref{thm:bpower}, 
the structure of the Lagrangian
should be invariant when passing from
one optimization problem to the other.
In particular,
one may figure out that
the assumptions on the Lagrangian
behaving asymptotically like a power of $\aa$
are preserved under our assumptions
on the drift function $b$.

Finally, Theorem \ref{thm:UniMonob} is a consequence
of Theorem \ref{thm:UniMono} and the above-mentionned
equivalence between the two optimization problems.

\begin{remark}
    \begin{enumerate}[\roman*)]
        \item
            If the Lagrangian admits the following
            form,
            \begin{equation*}
                L\lp t,x,\aa,\mu\rp
                =
                L^0\lp t,x,\aa\rp
                +L^1\lp t,\mu\rp,
            \end{equation*}
            we say that the Lagrangian is separated.
            Then \ref{hypo:LMono}
            is automatically satisfied
            since the left-hand side
            of the inequality is identically equal to $0$.
            In this case,
            the assumptions on $L$ are satisfied 
            if $L^0$
            behaves asymptotically like a power of $\aa$ 
            of exponenent $q'$,
            and $L^1$
            at most involves $\Lambda_{q_0}(\mu)^{q'}$.

            Here, we do not provide an explicit application in which
            the Lagrangian is separated,
            however this is a general hypothesis in
            the MFGC literature.
            Therefore, our framework in the present paper
            can be seen as an extension of the case
            when $L$ is separated.
        \item
All our assumptions are uniform with respect to the state
variable $x$. In particular, we restrain from considering more general
functions $f$ and $g$ since this topic has been
investigated in the literature devoted to
MFG systems without interaction through controls;
we believe that the same tools can be applied
to the present case, and that our results may be extended so.

\item
We did not address the case without diffusion,
i.e. $\nu=0$.
However, all the a priori estimates
of Sections \ref{sec:MonoFPV}
and \ref{sec:aprioriMono}
are uniform with respect to $\nu$.
Here, the diffusion is used
to easily obtain compactness results
which are central for proving our existence results
since the proofs rely on a fixed point theorem
and approximating sequences of solutions.
Using weaker topological spaces
and tools from the literature
devoted to weak solutions of systems
of MFGs without interaction through controls,
we believe that we can extend our results
to weak solutions to MFGC systems
without diffusion or
with possibly degenerate diffusion operators.
We plan to address this question in forthcoming works.
\end{enumerate}
\end{remark}

\subsection*{General outline}
\label{subsec:outlineMono}
The present work aims at proving Theorems
\ref{thm:UniMonob} and \ref{thm:bpower}.
We list below the main steps of our analysis
to make it easier for the reader to understand
the structure of the proofs.
\begin{enumerate}[label=\bf{\Roman*}]
    \item
        \label{step:FPmu}
        We solve the fixed
        point \eqref{eq:muttda}
        in $\mu$,
        which proves
        Lemma \ref{lem:FPmucont},
        in three steps:
        \begin{enumerate}[label=\bf{I.\alph*}]
            \item
                \label{step:FPmuapriori}
                in Lemma \ref{lem:apriorimu}
                we state a priori estimates
                for a solution of
                \eqref{eq:muttda};
            \item
                \label{step:FPmugen}
                using the Leray-Schauder fixed point theorm
                (Theorem \ref{thm:Leray}),
                we solve the fixed point \eqref{eq:muttda}
                at any time $t\in[0,T]$,
                in Lemma \ref{lem:FPmuLS};
            \item
                \label{step:FPmucont}
                we prove that the fixed point $\mu(t)$
                defined at any $t\in[0,T]$ in step
                \ref{step:FPmugen},
                is continuous with respect to time
                (Lemma \ref{lem:contmu});
                this implies
                lemma \ref{lem:FPmucont}.
        \end{enumerate}
    \item
        \label{step:ttda}
        We prove the existence of a solution 
        to \eqref{eq:MFGCttda},
        stated in Theorem \ref{thm:ExiMonottda},
        in two steps:
        \begin{enumerate}[label=\bf{II.\alph*}]
            \item
                \label{step:ttdaapriori}
                we obtain a priori estimates
                for solutions to \eqref{eq:MFGCttda}
                (Lemmas \ref{lem:Monoapriorith1}
                and \ref{lem:Monoapriori});
            \item
                \label{step:ttdaexi}
                in paragraph \ref{subsec:Exittda},
                we use Leray-Schauder fixed point theorem
                (Theorem \ref{thm:Leray})
                and the estimates
                of step \ref{step:ttdaapriori}
                to conclude.
        \end{enumerate}
    \item
        \label{step:MFGCrd}
        We prove existence and uniqueness of the solution
        to \eqref{eq:MFGCrd}
        (Theorems \ref{thm:Exird} and \ref{thm:UniMono}):
        \begin{enumerate}[label=\bf{III.\alph*}]
            \item
                \label{step:MFGCrdexi}
                the proof of 
                Theorem \ref{thm:Exird} is given
                in paragraph \ref{subsec:rd};
            \item
                \label{step:MFGCrduni}
                the proof of 
                Theorem \ref{thm:UniMono} is given
                in paragraph \ref{subsec:UniMono};
        \end{enumerate}
    \item
        \label{step:MFGCb}
        The proof of existence and uniqueness of the solution
        to \eqref{eq:MFGCb}
        (Theorems \ref{thm:UniMonob} and \ref{thm:bpower})
        is given in paragraph \ref{subsec:Exib}.
\end{enumerate}

\subsection*{Contribution}
An important novelty in the present work comes from
the assumptions we are considering.
On the one hand,
we consider a general class of
monotonous Lagrangians
which behave asymptotically like
a power of $\aa$
with any exponent in $(1,\infty)$ (while most
of the results in the literature only
address the quadratic case with
uniformly convex Lagrangian);
they may depend on moments
of $\mu_{\aa}$ at most of the same
order as the above-mentioned
exponent of $L$ in $\aa$;
we do not require them to
depend separately on
$\lp x,\aa\rp$ and $\mu_{\aa}$.
On the other hand,
the drift functions are also general
since we allow them to behave
like power functions
and to be not separated too.
See the assumptions in paragraph
\ref{subsec:hypoMono} for more details.

Moreover, 
most contributions focus on
MFG systems stated on $\TT^d$
for simplicity.
Here, we introduce a method to
extend an existence result
for a MFGC system
stated on the torus
to its counterpart
on the whole Euclidean space.
In particular, this method
holds for MFG system without interaction through controls
and the proof becomes easier.
See paragraph \ref{subsec:rd}.
We also introduce a method
to extend the well-posedness
of MFGC (or MFG) systems
to general drift functions,
see paragraph \ref{subsec:Exib}.
We would like to insist on the fact that
our techniques are designed
in order to preserve
the structure of the Lagrangian
when passing from one
setting to another.
Here, namely it preserves
the monotonicity assumption \ref{hypo:LMono}.
Furthermore, these methods apply
to the conclusions of \cite{2019arXiv190411292K}
and consequently generalize them.

\subsection{Properties of the Lagrangian and
    the Hamiltonian in \eqref{eq:MFGCrd} and \eqref{eq:MFGCttda}}
Here, we write the results and the proofs for
the Lagrangian and Hamiltonian
involved in system \eqref{eq:MFGCrd}.
However, none of the arguments below is
specific to the domain $\RR^d$,
therefore the conclusions hold
and the proofs can be repeated
for the Lagrangian and Hamiltonian
involved in \eqref{eq:MFGCttda}.

We start by proving that under the assumptions
of paragraph \ref{subsec:hypoMono},
when $b=\aa$, $L$ is strictly convex.
\begin{lemma}
    \label{lem:Lstrconv}
    If $L$ is coercive
    and differentiable with respect to $\aa$,
    and $b=\aa$,
    assuming that
    $L$ is strictly convex
    is equivalent to \ref{hypo:Lbconv}.
\end{lemma}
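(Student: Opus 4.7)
The plan is to prove both implications, reducing everything to the concavity of $\aa \mapsto -p\cdot\aa - L(t,x,\aa,\mu)$.

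For the forward direction, suppose $L$ is strictly convex in $\aa$. Then for any $(t,x,p,\mu)$ the map $\aa\mapsto -p\cdot\aa - L(t,x,\aa,\mu)$ is strictly concave, and the coercivity assumption forces it to tend to $-\infty$ as $|\aa|\to\infty$. Hence the supremum in \eqref{eq:defHMono} (with $b=\aa$) is attained, and strict concavity forces the maximizer to be unique, which is \ref{hypo:Lbconv}. This part is routine.

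For the converse I would argue by contraposition: assuming $L(t^0,x^0,\cdot,\mu^0)$ fails to be strictly convex for some $(t^0,x^0,\mu^0)$, I exhibit some $p$ for which the sup has two distinct maximizers. It is natural to split according to whether $L(t^0,x^0,\cdot,\mu^0)$ is convex or not. In the convex but not strictly convex case, there exist $\aa_1\neq\aa_2$ and $\theta\in(0,1)$ at which the convexity inequality is an equality, which forces $L$ to be affine on the segment $[\aa_1,\aa_2]$. Differentiability of $L$ combined with convexity then makes the tangent hyperplane at $\aa_1$ coincide with the chord along the whole segment, so $L_\aa(\aa_1)=L_\aa(\aa_2)$. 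Setting $p=-L_\aa(\aa_1)$, both $\aa_1$ and $\aa_2$ are critical points of the concave map $\aa\mapsto -p\cdot\aa-L(\aa)$, hence both are global maxima, contradicting \ref{hypo:Lbconv}. In the non-convex case, the Fenchel biconjugate $L^{**}$ lies strictly below $L$ at some $\aa_0$; Carath\'eodory's theorem yields a decomposition $\aa_0=\sum_i\lambda_i\aa_i$ with $\lambda_i>0$, $\sum\lambda_i=1$, at least two distinct $\aa_i$, and $L^{**}(\aa_0)=\sum_i\lambda_i L(\aa_i)$. Any subgradient $q$ of $L^{**}$ at $\aa_0$ supports $L^{**}$, and hence $L$, with equality at each $\aa_i$; taking $p=-q$ gives at least two maximizers of $\aa\mapsto -p\cdot\aa-L(\aa)$, again contradicting \ref{hypo:Lbconv}.

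The main obstacle is the second case: the first case is essentially a direct computation from differentiability and the segment structure, but the non-convex case requires invoking the convex envelope together with Carath\'eodory's theorem and the existence of a supporting hyperplane at $\aa_0$, which relies on $L^{**}$ being convex and finite near $\aa_0$ (guaranteed by coercivity of $L$). No further smoothness of $L^{**}$ is needed since we only use the existence of one subgradient. The rest of the argument is bookkeeping.
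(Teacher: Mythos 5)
Your proof is correct and follows essentially the same route as the paper: the forward direction is the same routine observation, and your two cases for the converse (convex-but-not-strict via the common tangent plane on a flat segment, and non-convex via the biconjugate, Carath\'eodory's theorem, and a subgradient of $\ell^{**}$ producing multiple maximizers) are exactly the paper's two steps, merely organized by contraposition. No substantive difference or gap to report.
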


\begin{proof}
    If $L$ is stricly convex and coercive,
    it is straightforward to check \ref{hypo:Lbconv}.

    Conversely, we
    take $\lp t,x,\mu\rp \in[0,T]\times\RR^d\times\cP\lp\RR^d\times\RR^d\rp$.
    We set $\ell(\aa)=L\lp t,x,\aa,\mu\rp$.
    It is sufficient to prove that $\ell$ is strictly convex.

    \emph{First step: proving that $\ell$ is convex.}

    We define $\ell^{**}$ as the biconjugate of $\ell$,
    $\ell^{**}$ is in particular the Legendre transform
    of $H\lp t,x,\cdot,\mu\rp$.
    The map $\ell^{**}$ is convex and continuous
    since $\ell$ is coercive,
    and it satisfies $\ell^{**}\leq \ell$.
    In what follows, we will prove that $\ell^{**}=\ell$. 

    We assume by contradiction that $\ell^{**}\neq \ell$:
    there exists $\aa^0\in\RR^d$ such that 
    $\ell^{**}\lp\aa^0\rp<\ell\lp\aa^0\rp$.
    We recall that $\ell$ and $\ell^{**}$ admit the same convex envelope,
    therefore by Carathéorthéodory's theorem,
    there exists
    $\lp\aa^i\rp_{1\leq i\leq d+1}\in\lp\RR^d\rp^{d+1}$
    and
    $\lp\ll^i\rp_{1\leq i\leq d+1}\in\lp\RR_+\rp^{d+1}$
    such that
    \begin{equation*}
        \aa^0
        =
        \sum_{i=1}^{d+1}\ll^i\aa^i,
        \;\;
        \ell^{**}\lp\aa^0\rp
        =
        \sum_{i=1}^{d+1}\ll^i\ell\lp\aa^i\rp,
        \;\;\text{ and }\;\;
        \sum_{i=1}^{d+1}\ll^i
        =
        1.
    \end{equation*}
    Using the inequality $\ell^{**}\leq \ell$,
    we obtain that
    \begin{equation*}
        \ell^{**}(\aa)
        =
        \sum_{i=1}^{d+1}\ll^i\ell\lp\aa^i\rp
        \geq
        \sum_{i=1}^{d+1}\ll^i\ell^{**}\lp\aa^i\rp.
    \end{equation*}
    This inequality is in fact an equality since $\ell^{**}$ is convex,
    which implies that $\ell^{**}\lp\aa^i\rp=\ell\lp\aa^i\rp$
    for any $i\in\lc1,2,\dots,d+1\rc$.
    Take $p\in\partial \ell^{**}\lp\aa^0\rp$,
    where $\partial \ell^{**}\lp\aa^0\rp$ is the subdifferential
    of $\ell^{**}$ at $\aa^0$.
    For $i\in\lc1,\dots,d+1\rc$,
    this implies
    \begin{equation*}
        \ell\lp \aa^i\rp
        =
        \ell^{**}\lp \aa^i\rp
        \geq
        \ell^{**}\lp \aa^0\rp
        +p\cdot\lp\aa^i-\aa^0\rp.
    \end{equation*}
    Multiplying the latter inequality by $\ll^i$
    and taking the sum over $i$,
    yield that it is in fact an equality.
    Then, 
    it is straightforward to check that
    $p\in\partial \ell^{**}\lp\aa^i\rp$
    for any $i$;
    this implies that $p=\nabla_{\aa}\ell\lp\aa^i\rp$,
    since $\ell^{**}\lp\aa^i\rp=\ell\lp\aa^i\rp$ and
    $\ell$ is differentiable with respect to $\aa$.
    The maximum in the definition of $H(t,x,-p,\mu)$ is achieved
    at any $\aa^i$, this is a contracdition
    with \ref{hypo:Lbconv}.
    Therefore $\ell=\ell^{**}$ and $\ell$
    is convex.

    \emph{Second step: $\ell$ is striclty convex.}
    
    By definition of the subdifferential of a convex function,
    $\aa\in\RR^d$ achieves the maximum in the definition of
    $H\lp t,x,-\nabla_{\aa}\ell\lp\aa\rp,\mu\rp$.
    Using the fact that this maximum is unique by \ref{hypo:Lbconv},
    we obtain the strict convexity of $\ell$,
    and the one of $L$ with respect to $\aa$.
\end{proof}

In paragraph \ref{subsec:hypoMono},
we assume that $L$
behaves at infinity as a
power of $\aa$
of exponent $q'$.
Because of the conjugacy relation between
$L$ and $H$, it implies
that $H$ behaves at infinity like
a power of $p$ of exponent $q$.
\begin{lemma}
    \label{lem:Hbound}
    Under assumptions
    \ref{hypo:Lreg},
    \ref{hypo:Lbconv},
    \ref{hypo:Lcoer} and
    \ref{hypo:Lbound},
    the map $H$, defined in \ref{eq:defHrd},
    is differentiable with respect to $x$ and $p$,
    $H$ and its derivatives 
    are continuous on
    $[0,T]\times\RR^d\times\RR^d\times\cP_{\infty,R}\lp\RR^d\times\RR^d\rp$
    for any $R>0$.
    Moreover there exists $\Ct_0>0$ a constant which only depends
    on $C_0$ and $q$ such that
    \begin{align}
        \label{eq:Hpbound}
        \labs H_p\lp t,x,p,\mu\rp\rabs
        &\leq
        \Ct_0\lp 1+|p|^{q-1}
        +\LL_{q'}\lp\mu\rp\rp,
        \\
        \label{eq:Hbound}
        \labs H\lp t,x,p,\mu\rp\rabs
        &\leq
        \Ct_0\lp 1+|p|^{q}
        +\LL_{q'}\lp\mu\rp^{q'}\rp,
        \\
        \label{eq:Hcoer}
        p\cdot H_p\lp t,x,p,\mu\rp
        -H\lp t,x,p,\mu\rp
        &\geq
        \Ct_0^{-1}|p|^q
        -\Ct_0\lp1
        +\LL_{q'}\lp\mu\rp^{q'}\rp,
        \\
        \label{eq:Hxbound}
        \labs H_x\lp t,x,p,\mu\rp\rabs
        &\leq
        \Ct_0\lp 1+|p|^{q}
        +\LL_{q'}\lp\mu\rp^{q'}\rp,
    \end{align}
    for any $(t,x)\in[0,T]\times\RR^d$,
    $p\in\RR^d$ and $\mu\in\cP\lp\RR^d\times\RR^d\rp$.
\end{lemma}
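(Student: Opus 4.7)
My approach is to define and analyze the unique maximizer $\aa^*(t,x,p,\mu)$ realizing the supremum in \eqref{eq:defHrd}. Existence of a maximizer comes from coercivity of the functional $\aa\mapsto -p\cdot\aa-L(t,x,\aa,\mu)$ (via \ref{hypo:Lcoer}) and continuity (via \ref{hypo:Lreg}); uniqueness is \ref{hypo:Lbconv}, together with the first-order condition $p=-L_\aa(t,x,\aa^*,\mu)$. A compactness argument, using that along any sequence $(x_n,p_n,\mu_n)\to(x,p,\mu)$ in $[0,T]\times\RR^d\times\RR^d\times\cP_{\infty,R}\lp\RR^d\times\RR^d\rp$ the maximizers $\aa^*_n$ are bounded by the estimate derived below and any cluster point must solve the optimality relation and hence coincide with $\aa^*$, gives continuity of $\aa^*$ on that set. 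Danskin's envelope theorem then produces differentiability of $H$ in $(x,p)$ with
\begin{equation*}
    H_p(t,x,p,\mu)=-\aa^*(t,x,p,\mu),\qquad H_x(t,x,p,\mu)=-L_x(t,x,\aa^*(t,x,p,\mu),\mu),
\end{equation*}
and the continuity of $H$, $H_p$, $H_x$ follows from that of $\aa^*$ and of $L$, $L_x$. The four pointwise estimates all reduce to a careful control of $|\aa^*|$ by $|p|$ and $\LL_{q'}(\mu)$.

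For the upper bound on $|\aa^*|$, I would compare the value of the functional at $\aa^*$ with the test value $\aa=0$: combining \ref{hypo:Lcoer} and \ref{hypo:Lbound},
\begin{equation*}
    C_0^{-1}|\aa^*|^{q'}-C_0(1+\LL_{q'}(\mu)^{q'})\leq L(t,x,\aa^*,\mu)\leq -p\cdot\aa^*+L(t,x,0,\mu)\leq|p||\aa^*|+C_0(1+\LL_{q'}(\mu)^{q'}),
\end{equation*}
and Young's inequality absorbs $|p||\aa^*|$ into $\frac{1}{2}C_0^{-1}|\aa^*|^{q'}$, leaving $|\aa^*|^{q'}\leq C(1+|p|^q+\LL_{q'}(\mu)^{q'})$. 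Since $(q-1)q'=q$, taking $q'$-th roots together with $H_p=-\aa^*$ gives \eqref{eq:Hpbound}. Then \eqref{eq:Hbound} follows from the identity $H=-p\cdot\aa^*-L(t,x,\aa^*,\mu)$ combined with \ref{hypo:Lbound} and the bounds on $|\aa^*|$ and $|\aa^*|^{q'}$; analogously \eqref{eq:Hxbound} follows from $H_x=-L_x(t,x,\aa^*,\mu)$, \ref{hypo:Lbound}, and the bound on $|\aa^*|^{q'}$.

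The main obstacle is the coercivity inequality \eqref{eq:Hcoer}. Using the envelope identity, the quantity simplifies to
\begin{equation*}
    p\cdot H_p-H=-p\cdot\aa^*-H=L(t,x,\aa^*,\mu)\geq C_0^{-1}|\aa^*|^{q'}-C_0(1+\LL_{q'}(\mu)^{q'}),
\end{equation*}
so the problem reduces to a matching \emph{lower} bound $|\aa^*|^{q'}\geq C^{-1}|p|^q-C(1+\LL_{q'}(\mu)^{q'})$. Since $p=-L_\aa(t,x,\aa^*,\mu)$, this will follow from the auxiliary upper estimate
\begin{equation*}
    |L_\aa(t,x,\aa,\mu)|\leq C(1+|\aa|^{q'-1}+\LL_{q'}(\mu)^{q'-1}),
\end{equation*}
which is not among the hypotheses and must be derived. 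For this I would exploit convexity of $L$ in $\aa$, available via Lemma \ref{lem:Lstrconv}: when $L_\aa(\aa_0)\neq 0$, the tangent inequality at $\aa_0+\lambda L_\aa(\aa_0)/|L_\aa(\aa_0)|$ gives $\lambda|L_\aa(\aa_0)|\leq L(\aa_0+\lambda L_\aa(\aa_0)/|L_\aa(\aa_0)|)-L(\aa_0)$; the two-sided bound \ref{hypo:Lbound} applied to the two terms on the right, combined with the scale choice $\lambda=1+|\aa_0|+\LL_{q'}(\mu)$ that balances $1+|\aa_0|^{q'}+\lambda^{q'}+\LL_{q'}(\mu)^{q'}$ against $\lambda$, yields exactly the claimed estimate. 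Raising to the $q$-th power and using $(q'-1)q=q'$ produces the desired lower bound on $|\aa^*|^{q'}$ and completes \eqref{eq:Hcoer}.
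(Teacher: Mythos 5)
Your proof is correct, and most of it runs parallel to the paper's: both identify the unique maximizer with $-H_p$, get the upper bound on $\labs H_p\rabs$ by comparing the optimal value with the value at $\aa=0$ and absorbing the cross term $|p||\aa^*|$ by Young's inequality, and deduce \eqref{eq:Hbound} and \eqref{eq:Hxbound} from the conjugacy identity and the envelope formula $H_x=-L_x\lp t,x,-H_p,\mu\rp$ (the paper, like you, first notes that $H_p$ is locally bounded so the supremum can be restricted to a compact set of controls before invoking the envelope theorem). The genuine divergence is in \eqref{eq:Hcoer}. The paper lower-bounds $H$ by inserting the explicit test control $\aa=-\ee|p|^{q-2}p$ into the supremum, obtaining $H\geq\frac{\ee}{2}|p|^q-C_0\lp1+\LL_{q'}(\mu)^{q'}\rp$ for $\ee$ small; comparing this with an upper bound for $H$ in terms of $\labs H_p\rabs^{q'}$ then yields $\labs H_p\rabs^{q'}\geq C^{-1}|p|^q-C\lp1+\LL_{q'}(\mu)^{q'}\rp$, after which \ref{hypo:Lcoer} applied to $L\lp t,x,-H_p,\mu\rp=p\cdot H_p-H$ concludes. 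You instead derive the gradient growth bound $\labs L_{\aa}\rabs\leq C\lp1+|\aa|^{q'-1}+\LL_{q'}(\mu)^{q'-1}\rp$ from the convexity supplied by Lemma \ref{lem:Lstrconv} together with the two-sided bound \ref{hypo:Lbound}, and then invert the first-order condition $p=-L_{\aa}\lp t,x,\aa^*,\mu\rp$; your scale choice $\lambda=1+|\aa_0|+\LL_{q'}(\mu)$ in the tangent inequality is the right one, and the identity $(q'-1)q=q'$ closes the computation. Both routes are of comparable length and both are valid; yours has the side benefit of establishing a growth estimate on $L_{\aa}$ that is not among the hypotheses and is of independent use, while the paper's test-control argument needs nothing beyond the optimality condition and the stated bounds on $L$.
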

Up to replacing $C_0$ with $\max(C_0,\Ct_0)$,
we can assume that the inequalities in Lemma
\ref{lem:Hbound} are satisfied with $C_0$
instead of $\Ct_0$.

Let us notice that
it is possible and not more difficult
to extend the results stated
in Lemma \ref{lem:Hbound} to the
Hamiltonian used in \eqref{eq:MFGCb}
and defined in \eqref{eq:defHMono},
however we will not have any use
of such results in the present paper.
\begin{proof} 
    \emph{First step: differentiability of $H$ in $p$,
    and continuity of $H$ and $H_p$.}

    For $(t,x,\mu)\in[0,T]\times\RR^d\times\cP\lp\RR^d\times\RR^d\rp$, 
    the map $\aa\mapsto L\lp t,x,\aa,\mu\rp$
    is stricly convex by Lemma \ref{lem:Lstrconv}
    and coercive by \ref{hypo:Lcoer};
    Theorem $26.6$
    in \cite{MR0274683}
    implies that $H$
    is differentiable with respect to $p$,
    the map
    $\aa\mapsto -L_{\aa}\lp t,x,\aa,\mu\rp$
    is invertible;
    its iverse map is 
    $p\mapsto -H_{p}\lp t,x,p,\mu\rp$
    by \cite{MR0274683}
    Theorem $26.5$.
    Theorem $26.6$
    in \cite{MR0274683}
    also implies that the maximum
    in \ref{eq:defHrd} is achieved
    by a unique control given
    by $-H_p\lp t,x,p,\mu\rp$.
    In the next step,
    we prove \ref{eq:Hpbound}
    which implies that $H_p$
    maps the bounded subsets of
    $[0,T]\times\RR^d\times\RR^d\times\cP_{\infty,R}\lp\RR^d\times\RR^d\rp$
    for $R>0$
    into relatively compact subspaces
    of $\RR^d$;
    we recall that $L_{\aa}$
    is continuous on
    $[0,T]\times\RR^d\times\RR^d\times\cP_{\infty,R}\lp\RR^d\times\RR^d\rp$;
    therefore $H_p$ is likewise continuous on the same space.
    We recall that $H$ satisfies
    \begin{equation*}
        H(t,x,p,\mu)
        =
        p\cdot H_p\lp t,x,p,\mu\rp
        -L\lp t,x, -H_p\lp t,x,p,\mu\rp,\mu\rp,
    \end{equation*}
    therefore $H$ is also continuous on the same spaces.

     
    \emph{Second step: proving the first three inequalities of the Lemma.}
    Using the growth assumptions on $L$, we first prove
    \eqref{eq:Hpbound}.
    On the one hand we have that
    \begin{equation*}
        H\lp t,x,p,\mu\rp
        \geq
        -L\lp t,x,0,\mu\rp
        \geq
        -C_0\lp 1+\LL_{q'}(\mu)^{q'}\rp,
    \end{equation*}
    by \ref{hypo:Lcoer} and
    the condition of optimality in \eqref{eq:defHrd}.
    On the other hand,
    \ref{hypo:Lbound},
    the fact that $-H_p\lp t,x,p,\mu\rp$
    satisfies the optimality
    condition in \eqref{eq:defHrd},
    and the Young inequality
    $y\cdot z\leq
    \frac{|y|^{q}}{q} 
    +\frac{|z|^{q'}}{q'}$
    for $y,z\in\RR^d$,
    yield that,
    \begin{align*}
        H\lp t,x,p,\mu\rp
        &=
        p\cdot H_p\lp t,x,p,\mu\rp
        -L\lp t,x,-H_p\lp t,x,p,\mu\rp,\mu\rp
        \\
        &\leq
        \frac1{q'C_0}\labs H_p\lp t,x,p,\mu\rp\rabs^{q'}
        +\frac{C_0^{\frac{q}{q'}}}{q}\labs p\rabs^{q}
        -C_0^{-1}\labs H_p\lp t,x,p,\mu\rp\rabs^{q'}
        +C_0\lp 1+\LL_{q'}(\mu)^{q'}\rp
        \\
        &\leq
        -\frac1{qC_0}\labs H_p\lp t,x,p,\mu\rp\rabs^{q'}
        +\frac{C_0^{\frac{q}{q'}}}{q}\labs p\rabs^{q}
        +C_0\lp 1+\LL_{q'}(\mu)^{q'}\rp.
    \end{align*}
    Therefore, using the latter two chains of inequalities,
    and the fact that $\frac{q}{q'}=q-1$,
    we obtain that,
    \begin{equation}
        \label{eq:Hpq'bound}
        \frac1{qC_0}\labs H_p\lp t,x,p,\mu\rp\rabs^{q'}
        \leq
        \frac{C_0^{q-1}}{q}\labs p\rabs^{q}
        +2C_0\lp 1+\LL_{q'}(\mu)^{q'}\rp.
    \end{equation}
    This and the inequality
    $|y+z|^{\frac1{q'}}
    \leq
    |y|^{\frac1{q'}}
    +|z|^{\frac1{q'}}$
    for $y,z\in\RR$,
    imply that
    \begin{equation*}
        \labs H_p\lp t,x,p,\mu\rp\rabs
        \leq
        C_0^{q-1}\labs p\rabs^{q-1}
        +\lp2qC_0^2\rp^{\frac1{q'}}
        \lp 1+\LL_{q'}(\mu)\rp.
    \end{equation*}
    From \ref{hypo:Lbound} and  
    \eqref{eq:Hpq'bound},
    we obtain that,
    \begin{align*}
        \labs H\lp t,x,p,\mu\rp\rabs
        &=
        \labs
        p\cdot H_p\lp t,x,p,\mu\rp
        -L\lp t,x,-H_p\lp t,x,p,\mu\rp,\mu\rp\rabs
        \\
        &\leq
        \frac{|p|^q}q
        +\frac{\labs H_p\lp t,x,p,\mu\rp\rabs^{q'}}{q'}
        +C_0\lp 1+
        \labs H_p\lp t,x,p,\mu\rp\rabs^{q'}
        +\LL_{q'}(\mu)^{q'}\rp
        \\
        &\leq
        \lp\frac1q+\frac{C_0^q}{q'}\rp|p|^q
        +C_0\lp1+2qC_0\lp\frac1{q'}+C_0\rp\rp
        \lp1+\LL_{q'}(\mu)^{q'}\rp.
    \end{align*}
    We still have to prove \eqref{eq:Hcoer}.
    Let $\ee$ be a positive constant 
    depending only on $C_0$
    such that $\ee-C_0\ee^{q'}\geq\frac{\ee}2$,
    by the optimality condition in \eqref{eq:defHrd} used
    with $\aa=-\ee|p|^{q-2}p$, we have,
    \begin{align*}
        H\lp t,x,p,\mu\rp
        &\geq
        \ee |p|^q
        -L\lp t,x, -\ee|p|^{q-2}p,\mu\rp
        \\
        &\geq
        \ee |p|^q
        -C_0\lp 1+\ee^{q'}|p|^{(q-1)q'}+\LL_{q'}\lp\mu\rp^{q'}\rp
        \\
        &\geq
        \frac{\ee}2 |p|^q
        -C_0\lp 1+\LL_{q'}\lp\mu\rp^{q'}\rp.
    \end{align*}
    Then from \ref{hypo:Lbound}, 
    \begin{align*}
        H\lp t,x,p,\mu\rp
        &=
        p\cdot H_p\lp t,x,p,\mu\rp
        -L\lp t,x,-H_p\lp t,x,p,\mu\rp,\mu\rp
        \\
        &\leq
        \frac{\ee}4 |p|^q
        +\frac{\lp4q\ee^{-1}\rp^{\frac{q'}q}}{q'}
        \labs H_p\lp t,x,p,\mu\rp\rabs^{q'}
        +C_0\lp 1+
        \labs H_p\lp t,x,p,\mu\rp\rabs^{q'}
        +\LL_{q'}(\mu)^{q'}\rp.
    \end{align*}
    Combining the latter two chains of inequalities,
    there exists $C$ a positive constant depending only
    on $C_0$ such that
    \begin{equation*}
        \labs H_p\lp t,x,p,\mu\rp\rabs^{q'}
        \geq
        C^{-1}|p|^q
        -C\lp 1+ \LL_{q'}\lp\mu\rp^{q'}\rp.
    \end{equation*}
    This and \ref{hypo:Lcoer} yield that
    \begin{align*}
        p\cdot H_p\lp t,x,p,\mu\rp
        -H\lp t,x,p,\mu\rp
        &=
        L\lp t,x,-H_p\lp t,x,p,\mu\rp,\mu\rp
        \\
        &\geq
        C_0^{-1}
        \labs H_p\lp t,x,p,\mu\rp\rabs^{q'}
        -C_0\lp 1+ \LL_{q'}\lp\mu\rp^{q'}\rp
        \\
        &\geq
        \lp C_0C\rp^{-1}|p|^q
        -\lp C_0+C_0^{-1}C\rp\lp 1+\LL_{q'}(\mu)^{q'}\rp.
    \end{align*}
    
    \emph{Third step:
        the smothness properties and the last inequality.}
    
    From \eqref{eq:Hpbound},
    $-H_p(t,x,p,\mu)$ is locally
    uniformly bounded,
    therefore we can reduce the set of admissible
    controls $\aa$ in \eqref{eq:defHrd} from
    $\RR^d$ to a compact subset of $\RR^d$.
    Within this framework, the envelop theorem
    states that $H$ is differentiable in $x$
    and its derivatives are defined by,
    \begin{equation*}
        H_x\lp t,x,p,\mu\rp
        =
        -L_x\lp t,x,-H_p\lp t,x,p,\mu\rp,\mu\rp.
    \end{equation*}
    The continuity property of $H_x$
    relies on the ones of $L_x$ and $H_p$.
    Moreover,
    from \ref{hypo:Lbound} and \eqref{eq:Hpq'bound},
    we obtain
    \begin{equation*}
        \labs H_x\lp t,x,p,\mu\rp\rabs
        \leq
        C_0^{q+1}|p|^{q}
        +C_0\lp1+2qC_0^2\rp
        \lp 1+\LL_{q'}\lp\mu\rp^{q'}\rp.
    \end{equation*}
    This concludes the proof.
\end{proof}

\section{Applications}
\label{sec:appli}
\subsection{Exhaustible ressource model}
\label{subsec:Monoexhau}
This model is often referred to
as Bertrand and Cournot competition model for exhaustible ressources,
introduced in the independent works
of Cournot \cite{cournot1838recherches} and Bertrand \cite{Bertrand};
its mean field game version in dimension one was introduced in
\cite{MR2762362} and numerically analyzed in
\cite{MR3359708};
for theoretical results see
\cite{2019arXiv190205461F,MR3755719,MR4064472,MR3888969}.
We consider a continuum of producers selling exhaustible ressources.
The production of a representative agent
at time $t\in[0,T]$ is denoted by $q_t\geq 0$;
the agents differ in their production capacity
$X_t\in\RR$ (the state variable),
that satifies,
\begin{equation*}
    dX_t
    =
    -q_tdt
    +\sqrt{2\nu} dW_t,
\end{equation*}
where $\nu>0$ and $W$ is a $d$-dimentional Brownian motion.
Each producer is selling a different ressource
and has her own consumers.
However, the ressources are substitutable
and any consumer may change her mind
and buy from a competitor
depending on the degree of competition
in the game (which stands for $\ee$ in the
linear demand case below for instance).
Therefore the selling price per unit of ressource
that a producer can make when
she sales $q$ units of ressource,
depends naturally on $q$
and on the quantity produced by the other
agents.
The price satisfies a
supply-demand relationship,
and is given by
$P\lp q,\qo\rp$,
where $\qo$ is the accumulated demand
which depends on 
the overall distribution of productions
of the agents.
A producer tries to maximize her profit,
or equivalently to minimize the following
quantity,
\begin{equation*}
    \EE\lb\int_0^T -P(q_t,\qo_t)\cdot q_tdt
    +g\lp X_T\rp\rb,
\end{equation*}
where $g$ is a terminal cost
which often penalizes the producers
who have non-zero production capacity at the end of the game.
In the Cournot competition, see \cite{cournot1838recherches},
the producers are controling their production $q$.
Like in the formulation of the
MFG arising in such a problem
in \cite{MR3359708},
here we consider the Bertrand's formulation \cite{Bertrand},
where an agent directly controls her selling price
$\aa=P(q,\qo)$.
Then after inverting the latter equality,
the production can be viewed as a function
of the price and the mean field.
Mathematically this corresponds to writing
$q=Q\lp \aa,\aao\rp$.

In \cite{MR3359708},
the authors considered a linear demand system depending on
$\qo_{\text{lin}}=\int_{\RR}q(x)dm(x)$,
and a price satisfying
$\aa=P_{\text{lin}}(q,\qo_{\text{lin}})=1-q-\ee \qo_{\text{lin}}$.
In this case, $L^{\text{lin}}$ the running cost,
and $H_{\text{lin}}$ its Legendre transform
are defined by
\begin{align*}
    L^{\text{lin}}\lp \aa,\mu\rp
    &=
    \aa^2
    +\frac{\ee}{1+\ee}\aa\aao
    -\frac1{1+\ee}\aa,
    \\
    H^{\text{lin}}\lp p,\mu\rp
    &=
    \frac14\lp p+\frac{\ee}{1+\ee}\aao-\frac1{1+\ee}\rp^2,
\end{align*}
where $\aa,p\in\RR$, $\mu\in\cP\lp\RR\times\RR\rp$
and $\aao$ is defined by
$\aao=\int_{\RR\times\RR}\aat d\mu(y,\aat)$.
Therefore the system of MFGC has the following form,
\begin{equation}
    \label{eq:exhaulin}
    \lc
    \begin{aligned}
        &-\ptt u
        -\nu\Delta u
        +
        \frac14\lp \nabla_xu+\frac{\ee}{1+\ee}\aao-\frac1{1+\ee}\rp^2
        =
        0,
        \\
        &\ptt m
        -\nu\Delta m
        -\divo\lp\frac12\lp \nabla_xu+\frac{\ee}{1+\ee}\aao-\frac1{1+\ee}\rp m\rp
        =
        0,
        \\
        &\aao(t)
        =
        -\int_{\RR^d}
        \frac12\lp \nabla_xu+\frac{\ee}{1+\ee}\aao(t)-\frac1{1+\ee}\rp
        dm(t,x),
        \\
        &u(T,x)
        =
        g(x),
        \\
        &m(0,x)
        =
        m_0(x),
    \end{aligned}
    \right.
\end{equation}
for $(t,x)\in[0,T]\times\RR$.
Roughly speaking, $\ee=0$ corresponds
to a monopolist who does not
suffer from competition,
and she plays as if she was alone in the game.
Conversely, $\ee=\infty$ stands for all the producers
selling the same ressource and the consumers not having
any a priori preference.

Let us consider the following generalization of the latter system
to the $d$-dimensional case with a more general Hamiltonian
and interaction through controls,
\begin{equation}
    \label{eq:exhau}
    \lc
    \begin{aligned}
        &-\ptt u
        -\nu\Delta u
        +
        H\lp t,x,\nabla_xu+\vp(x)^TP(t)\rp
        =
        f(t,x,m(t)),
        \\
        &\ptt m
        -\nu\Delta m
        -\divo\lp H_p\lp t, x,\nabla_xu+\vp(x)^TP(t)\rp m\rp
        =
        0,
        \\
        &P(t)
        =
        \Psi\lp t,
        -\int_{\RR^d}\vp(x)
        H_p\lp t,x,\nabla_xu+\vp(x)^TP(t)\rp dm(t,x)\rp,
        \\
        &u(T,x)
        =
        g(x,m(T)),
        \\
        &m(0,x)
        =
        m_0(x),
    \end{aligned}
    \right.
\end{equation}
where $\vp:\RR^d\mapsto\RR^{d\times d}$
and $\Psi:\RR^d\mapsto\RR^{d\times d}$
are given functions.
The counterpart of the latter system
posed on $\TT^d$ has been
introduced in \cite{2019arXiv190205461F}.
Theorem \ref{thm:UniMonob} and \ref{thm:bpower}
provide the existence and the uniqueness respectively
of the solution of this MFGC system.

\begin{proposition}
    \label{prop:uniexhau}
    Assume
    \ref{hypo:Lreg},
    \ref{hypo:Lbconv},
    \ref{hypo:gMono}.
    If the function $\Psi$ is continuous,
    $\Psi(t,\cdot)$ is monotone,
    locally Lipschitz continuous,
    and admits at most a
    power-like growth
    of exponent $q'-1$
    with a coefficient uniform in $t\in[0,T]$,
    there exists at most one solution
    to \eqref{eq:exhau}.
\end{proposition}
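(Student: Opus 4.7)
The plan is to recast \eqref{eq:exhau} as an instance of the system \eqref{eq:MFGCrd} and invoke Theorem~\ref{thm:UniMono} (equivalently Theorem~\ref{thm:UniMonob} since here $b=\aa$). The only non-trivial point will be the verification of the Lasry--Lions monotonicity \ref{hypo:LMono} for the underlying Lagrangian, and this is precisely where the monotonicity of $\Psi(t,\cdot)$ enters.

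Denote by $L^0(t,x,\aa)$ the Legendre transform of $H(t,x,\cdot)$ and introduce the linear functional
\begin{equation*}
\widetilde{\aa}(\mu) := \int_{\RR^d\times\RR^d}\vp(x)\,\aa\,d\mu(x,\aa),
\end{equation*}
together with the Lagrangian
\begin{equation*}
\widetilde{L}(t,x,\aa,\mu) := L^0(t,x,\aa) + \aa\cdot\vp(x)^T\Psi\lp t,\widetilde{\aa}(\mu)\rp.
\end{equation*}
Its Legendre transform with respect to $\aa$ equals $\widetilde{H}(t,x,p,\mu) = H(t,x,p+\vp(x)^T\Psi(t,\widetilde{\aa}(\mu)))$. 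A direct substitution shows that $(u,m,P)$ solves \eqref{eq:exhau} if and only if $(u,m,\mu)$ with $\mu(t) = (I_d,-\widetilde{H}_p(t,\cdot,\nabla_xu(t,\cdot),\mu(t)))\#m(t)$ solves \eqref{eq:MFGCrd} with Hamiltonian $\widetilde{H}$, the link being $P(t) = \Psi(t,\widetilde{\aa}(\mu(t)))$. Uniqueness for the latter system thus transfers immediately to \eqref{eq:exhau}.

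The heart of the argument is then the verification of \ref{hypo:LMono} for $\widetilde{L}$. Since $L^0$ does not depend on $\mu$, the difference $\widetilde{L}(t,x,\aa,\mu^1)-\widetilde{L}(t,x,\aa,\mu^2)$ reduces to $\aa\cdot\vp(x)^T\lp\Psi(t,\widetilde{\aa}(\mu^1))-\Psi(t,\widetilde{\aa}(\mu^2))\rp$; using the identity $\aa\cdot\vp(x)^Tv = (\vp(x)\aa)\cdot v$ and integrating against $d(\mu^1-\mu^2)$ produces
\begin{equation*}
\int_{\RR^d\times\RR^d}\lp\widetilde{L}(t,x,\aa,\mu^1)-\widetilde{L}(t,x,\aa,\mu^2)\rp d(\mu^1-\mu^2)(x,\aa) = \lp\widetilde{\aa}(\mu^1)-\widetilde{\aa}(\mu^2)\rp\cdot\lp\Psi(t,\widetilde{\aa}(\mu^1))-\Psi(t,\widetilde{\aa}(\mu^2))\rp,
\end{equation*}
which is nonnegative by the assumed monotonicity of $\Psi(t,\cdot)$.

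The remaining hypotheses of Theorem~\ref{thm:UniMonob} are readily checked: the continuity and local Lipschitz regularity of $\Psi$ combined with the regularity of $\vp$ and $L^0$ yield \ref{hypo:Lreg} for $\widetilde{L}$; since the added term is affine in $\aa$, the strict convexity encoded in \ref{hypo:Lbconv} is inherited from $L^0$; \ref{hypo:gMono} is part of the hypotheses; and \ref{hypo:binvert} is trivial because $b=\aa$. I expect the main difficulty to be conceptual rather than computational: recognizing that the drift correction $\vp(x)^TP(t)$ stems from a Lagrangian whose $\mu$-dependence is affine in $\aa$ is exactly what allows the monotonicity of $\Psi$ to translate into \ref{hypo:LMono}, after which the argument reduces to the direct application of Theorem~\ref{thm:UniMonob}.
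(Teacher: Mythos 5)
Your proposal is correct and follows essentially the same route as the paper: the paper likewise absorbs the price term into a Lagrangian $\ell(t,x,\aa,\mu)=L(t,x,\aa)+\vp(x)\aa\cdot P(t,\mu)$ (with $P(t,\mu)=\Psi(t,\int\vp(x)\aa\,d\mu)$), verifies that \ref{hypo:LMono} reduces via the affine $\mu$-dependence to the monotonicity of $\Psi$, and concludes by Theorem \ref{thm:UniMonob}. The only cosmetic difference is that the paper also folds $f$ into the Lagrangian, which is immaterial.
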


\begin{proposition}
    \label{prop:exiexhau}
        Assume 
        \ref{hypo:Lreg},
        \ref{hypo:Lbconv},
        \ref{hypo:Lcoer}-\ref{hypo:Monogregx},
        and that $\Psi$ satisfies
        the same assumptions as in
        Proposition \ref{prop:uniexhau}.
        There exists a solution to \eqref{eq:exhau}.
        %
\end{proposition}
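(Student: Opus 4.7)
The plan is to reduce \eqref{eq:exhau} to an instance of \eqref{eq:MFGCrd} by encoding the price $P(t)$ through a $\mu$-dependent Lagrangian, so that Theorem \ref{thm:Exird} applies directly. Denote by $L(t,x,\alpha)$ the Legendre transform of $H(t,x,\cdot)$, and set
\[
\bar\alpha(\mu):=\int_{\RR^d\times\RR^d}\varphi(y)\alpha\,d\mu(y,\alpha),
\qquad
\tilde L(t,x,\alpha,\mu)
:=
L(t,x,\alpha)+\Psi\bigl(t,\bar\alpha(\mu)\bigr)\cdot\varphi(x)\alpha.
\]
A direct Legendre computation in $\alpha$ then gives $\tilde H(t,x,p,\mu)=H(t,x,p+\varphi(x)^T\Psi(t,\bar\alpha(\mu)))$, so any solution $(u,m,\mu)$ of \eqref{eq:MFGCrd} with Lagrangian $\tilde L$ produces a solution of \eqref{eq:exhau} upon setting $P(t):=\Psi(t,\bar\alpha(\mu(t)))$: the HJB and FPK equations match line by line, and the fixed-point relation \eqref{eq:murd} becomes exactly the third equation of \eqref{eq:exhau}.

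The heart of the argument will consist in checking that $\tilde L$ fulfils hypotheses \ref{hypo:Lreg}--\ref{hypo:Monogregx}. The monotonicity \ref{hypo:LMono} reduces, by linearity of $\mu\mapsto\bar\alpha(\mu)$, to
\[
\int_{\RR^d\times\RR^d}\bigl(\tilde L(t,x,\alpha,\mu^1)-\tilde L(t,x,\alpha,\mu^2)\bigr)d(\mu^1-\mu^2)(x,\alpha)
=
\bigl(\Psi(t,\bar\alpha(\mu^1))-\Psi(t,\bar\alpha(\mu^2))\bigr)\cdot\bigl(\bar\alpha(\mu^1)-\bar\alpha(\mu^2)\bigr)\geq 0,
\]
by the monotonicity of $\Psi(t,\cdot)$. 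Assumption \ref{hypo:Lbconv} survives because the perturbation $\Psi\cdot\varphi\alpha$ is affine in $\alpha$ and so preserves strict convexity and uniqueness of the maximiser in \eqref{eq:defHrd}; the regularity part of \ref{hypo:Lreg} follows from the local Lipschitz continuity of $\Psi$ together with the weak-$*$ continuity of $\mu\mapsto\bar\alpha(\mu)$ on $\cP_{\infty,R}\lp\RR^d\times\RR^d\rp$, which holds because $(y,\alpha)\mapsto\varphi(y)\alpha$ is bounded and continuous on supports of measures in this set.

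I expect the growth and coercivity estimates \ref{hypo:Lbound} and \ref{hypo:Lcoer} to be the only genuinely delicate point. Using the power-like growth of $\Psi$ of exponent $q'-1$, combined with the elementary bound $|\bar\alpha(\mu)|\leq\norminf{\varphi}\LL_1(\mu)\leq C\LL_{q'}(\mu)$, one obtains $|\Psi(t,\bar\alpha(\mu))|\leq C\bigl(1+\LL_{q'}(\mu)^{q'-1}\bigr)$. Then Young's inequality with conjugate exponents $q',q$, together with the identity $(q'-1)q=q'$, yields for every $\eta>0$
\[
\bigl|\Psi(t,\bar\alpha(\mu))\cdot\varphi(x)\alpha\bigr|
\leq
\eta|\alpha|^{q'}+C_\eta\bigl(1+\LL_{q'}(\mu)^{q'}\bigr).
\]
Added to the assumed bounds on $L$ and $L_x$, this gives \ref{hypo:Lbound}, and choosing $\eta$ small enough so that the $\eta|\alpha|^{q'}$ term can be absorbed into $C_0^{-1}|\alpha|^{q'}$ yields the coercivity \ref{hypo:Lcoer} for $\tilde L$; \ref{hypo:Monogregx} is inherited verbatim from the hypotheses of the proposition. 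Once these verifications are in place, Theorem \ref{thm:Exird} provides a solution of the MFGC system associated with $\tilde L$, and the reduction described above produces a solution of \eqref{eq:exhau}.
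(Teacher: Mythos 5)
Your proposal is correct and follows essentially the same route as the paper: absorb the price term into a $\mu$-dependent Lagrangian $L(t,x,\aa)+\Psi(t,\bar\aa(\mu))\cdot\vp(x)\aa$, observe that the monotonicity \ref{hypo:LMono} reduces to $(\Psi(t,\bar\aa(\mu^1))-\Psi(t,\bar\aa(\mu^2)))\cdot(\bar\aa(\mu^1)-\bar\aa(\mu^2))\geq 0$, check that the remaining hypotheses are preserved, and invoke the existence theorem (the paper cites Theorem \ref{thm:bpower} with $b=\aa$, which is the same as your use of Theorem \ref{thm:Exird}). Your verification of the growth and coercivity bounds via Young's inequality and $(q'-1)q=q'$ is in fact more detailed than the paper's one-line assertion that these assumptions are preserved.
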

\begin{proof}
    Take the drift function as $b=\aa$.
    We define the Lagrangian
    $\ell$ by
    \begin{equation*}
        \ell\lp t,x,\aa,\mu\rp
        =
        L\lp t,x,\aa\rp
        +\vp(x)\aa\cdot P(t,\mu)
        +f(t,x,m),
    \end{equation*}
    where $L$ is the Legendre transform
    of the map $H$ in \eqref{eq:exhau},
    and $P(t,\mu)$ is defined by 
    $P(t,\mu)=\Psi\lp t,
    \int_{\RR^d\times\RR^d}\vp(x)\aa d\mu(x,\aa)\rp$,
    for $(t,x,\aa,m,\mu)\in
    [0,T]\times\RR^d\times\RR^d\times
    \cP\lp\RR^d\rp\times
    \cP\lp\RR^d\times\RR^d\rp$
    such that $m$ is the first marginal
    of $\mu$.
    We take $h$ as the Legendre transform
    of $\ell$ with respect to $\aa$.

    If $\Psi$ satisfies the assumptions in \ref{prop:uniexhau},
    any of the assumptions
    \ref{hypo:Lreg},
    \ref{hypo:Lbconv},
    \ref{hypo:Lcoer},
    or \ref{hypo:Lbound}
    is preserved by replacing $L$ by $\ell$.
    Moreover, a straightforward calculation
    yields that
        \begin{multline*}
            \int_{\RR^d\times\RR^d}
            \lp \ell\lp t, x,\a,\mu^1 \rp
            -\ell\lp t, x,\a,\mu^2\rp \rp
            d\lp \mu^1-\mu^2 \rp (x,\aa)
            \\
            =
            \lp P\lp t,\mu^1\rp
            -P\lp t,\mu^2\rp\rp\cdot
            \int_{\RR^d\times\RR^d}\vp(x)\aa d\lp\mu^1-\mu^2\rp(x,\aa),
        \end{multline*}
    for $t\in[0,T]$ and $\mu^1,\mu^2\in\cP\lp\RR^d\times\RR^d\rp$.
    This and the monotonicity of $\Psi$ implies
    that  $\ell$ satisfies \ref{hypo:LMono}.
    Therefore, Propositions \ref{prop:uniexhau} and \ref{prop:exiexhau}
    are direct consequences of Theorems
    \ref{thm:UniMonob} and \ref{thm:bpower} respectively.
\end{proof}
In \cite{2019arXiv190205461F},
similar existence and uniqueness results
for the counterpart of \eqref{eq:exhau} posed on $\TT^d$
are given in the quadratic setting,
with a uniformly convex Lagrangian
and $\Psi$ being the gradient of a convex map.
Here, we generalize their results to a wider
class of Lagrangians and functions $\Psi$.

For an extension of this model to the case when $\Psi$
is non-monotone, see \cite{2019arXiv190411292K}.


\subsection{A model of crowd motion}
\label{subsec:abncrowd}
This model of crowd motion has been
introduced in \cite{2019arXiv190411292K}
in the non-monotone setting.
It has been
numerically studied
in \cite{achdou2020mean}
in the quadratic non-monotone case.
For $\mu\in\cP\lp\RR^d\times\RR^d\rp$
we define $V(\mu)$ the average drift by, 
\begin{equation*}
    V(\mu)
    =
    \frac1{Z(\mu)}
    \int_{\RR^d\times\RR^d}
    \aa k(x) d\mu(x,\aa),
\end{equation*}
where $Z(\mu)$ is a normalization constant
defined by
$Z(\mu)
=
\lp\int_{\RR^d\times\RR^d}
k(x)^{q_1} d\mu(x,\aa)\rp^{\frac1{q_1}}$,
for some constant $q_1\in[q,\infty]$
where $q$ is defined below.
To be consistent with the notations used in \cite{2019arXiv190411292K},
$k:\RR^d\rightarrow\RR_+$ is a non-negative kernel.
By convention, if $Z\lp\mu\rp=0$, we take $V\lp\mu\rp=0$.

The state of a representative agent is given by her position $X_t\in\RR^d$
which she controls through her velocity $\aa$ via the following
stochastic differential equation,
\begin{equation*}
    dX_t
    =
    \aa_t dt
    +\sqrt{2\nu}dW_t.
\end{equation*}
Her objective is to minimize the cost functional given by,
\begin{equation*}
    \EE\lb\int_0^T
    \frac{\th}{2}\labs \aa_t+\ll V\lp\mu(t)\rp\rabs^{2}
    +\frac{1-\th}{a'}\labs \aa_t\rabs^{a'}
    +f(t,X_t,m(t))dt
    +g(X_T,m(T))\rb,
\end{equation*}
where $\ll\geq0$ and $0\leq\th\leq1$ are two constants 
standing for the intensity of the preference
of an individual to have an opposite
control as the stream one,
and $a'> 1$ is an exponent.
In this model we define the Lagrangian $L$ by,
\begin{equation*}
    L\lp x,\aa,\mu\rp
    =
    \frac{\th}{2}\labs\aa+\ll V(\mu)\rabs^{2}
    +\frac{1-\th}{a'}\labs\aa\rabs^{a'},
\end{equation*}
and the Hamiltonian $H$ as its Legendre transform.
The map $H$ does not admit an explicit form for every choice
of the parameters $a'$.
We take $q'=\max\lp 2,a'\rp$,
and $q=\frac{q'}{q'-1}$ its conjugate exponent.

Here, since the control is equal to the drift,
the MFGC system is of the form of \eqref{eq:MFGCrd}.
Therefore, the following proposition
is a consequence of 
Theorems
\ref{thm:UniMonob}
and \ref{thm:bpower}.
\begin{proposition}
    Under assumption \ref{hypo:Monogregx},
    there exists a solution to the above
    MFGC system of crowd motion.

    Under assumption \ref{hypo:gMono},
    this solution is unique.
\end{proposition}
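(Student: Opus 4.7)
The strategy is to verify the hypotheses of Theorems \ref{thm:bpower} and \ref{thm:UniMonob} for the Lagrangian $L(x,\aa,\mu) = \frac{\th}{2}|\aa + \ll V(\mu)|^2 + \frac{1-\th}{a'}|\aa|^{a'}$ with drift $b = \aa$, and then invoke them directly. Since $b = \aa$, Assumptions \ref{hypo:binvert} and \ref{hypo:bbound} hold trivially with $\aa^*(t,x,\bt,\mu) = \bt$ and $q_0 = 1$. The data assumption \ref{hypo:Monogregx} is the standing hypothesis, and \ref{hypo:gMono} is the extra hypothesis invoked for uniqueness.

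First I would dispatch the easier structural assumptions. Smoothness of $L$ in $(x,\aa)$ is immediate. Continuity with respect to $\mu \in \cP_{\infty,R}$ in the weak-$*$ topology reduces to continuity of $\mu \mapsto V(\mu)$ and $\mu \mapsto Z(\mu)$, which follows since $\aa \mapsto \aa k(x)$ and $\aa \mapsto k(x)^{q_1}$ are bounded and continuous on the compact support of measures in $\cP_{\infty,R}$. This yields \ref{hypo:Lreg}. For \ref{hypo:Lbconv}, $L$ is strictly convex in $\aa$ because $\frac{1-\th}{a'}|\aa|^{a'}$ is strictly convex for $a'>1$ and $\th<1$; if $\th = 1$ the quadratic term alone is strictly convex. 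For \ref{hypo:Lcoer} and \ref{hypo:Lbound}, with $q' = \max(2,a')$ the dominant growth of $L$ in $\aa$ is of order $|\aa|^{q'}$, and the required bounds on $V(\mu)$ in terms of $\LL_{q'}(\mu)$ follow by Hölder's inequality from the assumption $q_1 \geq q$; the elementary estimate $|\aa + \ll V|^2 \geq \tfrac12|\aa|^2 - \ll^2 |V|^2$ yields coercivity.

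The main obstacle is to verify the monotonicity assumption \ref{hypo:LMono}. Expanding the square and collecting the $\aa$-$\mu$ coupling, one obtains
\begin{equation*}
L(x,\aa,\mu^1) - L(x,\aa,\mu^2) = \th\ll\, \aa \cdot (V(\mu^1) - V(\mu^2)) + \tfrac{\th\ll^2}{2}(|V(\mu^1)|^2 - |V(\mu^2)|^2).
\end{equation*}
Integrating against $d(\mu^1 - \mu^2)$, the second term vanishes since $\int d(\mu^1-\mu^2) = 0$, leaving
\begin{equation*}
\th\ll\,(V(\mu^1) - V(\mu^2)) \cdot \int_{\RR^d\times\RR^d} \aa\, d(\mu^1 - \mu^2)(x,\aa).
\end{equation*}
The task is to recognize this quantity as non-negative. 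In the simplest choice $k\equiv 1$ and $q_1 = \infty$, one has $Z(\mu)=1$ and $V(\mu) = \int \aa\, d\mu$, whence the expression reduces to $\th\ll\,|V(\mu^1)-V(\mu^2)|^2$, which is $\geq 0$ since $\th,\ll \geq 0$; for more general $k$ and $q_1$, a compatibility check between the numerator $\int \aa k(x)\,d\mu$ and the normalizer $Z(\mu)$ is required, and this is where the delicate part of the analysis lies.

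Once \ref{hypo:LMono} is established, existence follows from Theorem \ref{thm:bpower} since all assumptions \ref{hypo:Lreg}--\ref{hypo:Monogregx} together with \ref{hypo:binvert}--\ref{hypo:bbound} have been verified. Uniqueness follows from Theorem \ref{thm:UniMonob}, using in addition the hypothesis \ref{hypo:gMono} which is assumed in the second part of the proposition.
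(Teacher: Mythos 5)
Your overall strategy is exactly the paper's: the paper disposes of this proposition in a single sentence, saying the proof ``consists in checking that $L$ satisfies \ref{hypo:Lreg}--\ref{hypo:Lbound}'' and then invoking the main theorems. Your verification of \ref{hypo:Lreg}, \ref{hypo:Lbconv}, \ref{hypo:Lcoer} and \ref{hypo:Lbound} (via $\norm{k}{L^{q}(m)}\le\norm{k}{L^{q_1}(m)}=Z(\mu)$, so that $|V(\mu)|\le\LL_{q'}(\mu)$) is what is needed, and your reduction of \ref{hypo:LMono} to the sign of $\th\ll\,(V(\mu^1)-V(\mu^2))\cdot\int\aa\,d(\mu^1-\mu^2)$ is correct.

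The step you defer, however --- the ``compatibility check between the numerator and the normalizer'' --- is not a removable technicality: it is the entire content of the proposition, and it cannot be completed for a general non-constant kernel $k$. Take $d=1$, two points $x_1,x_2$ with $k(x_1)=10$ and $k(x_2)=1/10$, and $\mu^i=\frac12\lp\delta_{(x_1,a_1^i)}+\delta_{(x_2,a_2^i)}\rp$ for $i=1,2$. The two measures share the same first marginal, so $Z(\mu^1)=Z(\mu^2)=:Z$, and writing $u=a_1^1-a_1^2$, $v=a_2^1-a_2^2$, the quantity to be signed equals $\frac{\th\ll}{4Z}\lp k(x_1)u+k(x_2)v\rp\lp u+v\rp$, which is negative for $u=1$, $v=-2$. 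Hence \ref{hypo:LMono} genuinely fails unless $k$ is constant (your case $k\equiv1$, where the expression becomes the perfect square $\th\ll\,|V(\mu^1)-V(\mu^2)|^2$) or $\th\ll=0$ (separated Lagrangian). The paper's one-line proof glosses over exactly this point, so your honesty has located a real restriction on the model rather than a mere delicacy; but as written your argument does not prove the proposition for a general kernel $k$, and a complete proof must either restrict to constant $k$ or add a structural hypothesis on $k$ making the bilinear form above nonnegative. A secondary issue in the same spirit: the convention $V(\mu)=0$ when $Z(\mu)=0$ makes $\mu\mapsto L(x,\aa,\mu)$ discontinuous where $Z$ vanishes, so \ref{hypo:Lreg} also requires $k$ bounded away from zero, which is again automatic only when $k$ is a positive constant.
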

The proof is straightforward,
it consists in checking
that $L$ satisfies
\ref{hypo:Lreg}-\ref{hypo:Lbound}.

For existence results of the MFGC system
of this model with $\ll<0$,
see \cite{2019arXiv190411292K}.

\section{The fixed point \eqref{eq:muttda}
    and the proof of Lemma \ref{lem:FPmucont}}
\label{sec:MonoFPV}
This section is devoted to step \ref{step:FPmu}.
In paragraph \ref{subsec:FPmuMono},
we state a priori estimates
on a fixed point of \eqref{eq:muttda}
(Lemma \ref{lem:apriorimu});
then we we use these estimates and
Leray-Schauder fixed point theorem
(Theorem \ref{thm:Leray})
and obtain the existence of a fixed point
\eqref{eq:muttda} at any time $t\in[0,T]$
(Lemma \ref{lem:FPmuLS}).
We address
the continuity with respect to time of the fixed point,
i.e. step \ref{step:FPmucont},
in Lemma \ref{lem:contmu}.

In this section and the next one,
we work on
$\TT^d_a=\RR^d/\lp a\ZZ^d\rp$
$\TT^d_a$
the $d$-dimensional torus of radius $a>0$.
Here we take
$L:[0,T]\times\TT^d_a\times\RR^d\times\cP\lp\TT^d_a\times\RR^d\rp\to \RR$.
All the assumptions in paragraph \ref{subsec:hypoMono}
are stated in $\RR^d$, but,
when considering that $L$
satisfies one of those assumptions,
we shall simply replace $\RR^d$
by $\TT^d_a$ as the state set in the chosen assumption
(note that we keep $\RR^d$ as the set of admissible controls).
The initial distribution $m_0$
is now in $\cP\lp\TT^d_a\rp$.
The Hamiltonian $H$ is still defined as the
Legendre transform of $L$,
i.e. it satisfies \eqref{eq:defHrd}.

\subsection{Leray-Schauder Theorem for solving
    the fixed point in $\mu$}
\label{subsec:FPmuMono}
We start by stating a priori estimates
for solutions of the fixed point in $\mu$ \eqref{eq:muttda},
involving $\LL_{q'}\lp\mu\rp$ and $\LL_{\infty}\lp\mu\rp$
defined in \eqref{eq:defL}.
\begin{lemma}
    \label{lem:apriorimu}
    Assume that $L$
    satisfies
    \ref{hypo:Lreg}-\ref{hypo:Lbound}
    For any $t\in[0,T]$,
    $m\in \cP\lp\TT^d_a\rp$
    and $p\in C^0\lp\TT^d_a;\RR^d\rp$,
    if there exists
    $\mu\in\cP\lp\TT^d_a\times\RR^d\rp$
    such that
    \begin{equation}
        \label{eq:MonoFPmu}
        \mu
        =
        \lp I_d,-H_p\lp t, \cdot,p(\cdot),\mu\rp\rp\# m,
    \end{equation}
    then it satisfies
    \begin{align}
        \label{eq:aprioriLq'}
            \LL_{q'}\lp\mu\rp^{q'}
            &\leq
            4C_0^2
            +\frac{\lp q'\rp^{q-1}\lp2C_0\rp^q}{q}
            \norm[q]{p}{L^q(m)},
            \\
        \label{eq:aprioriLinf}
            \LL_{\infty}\lp\mu\rp
            &\leq
            C_0\lp 1+\norminf{p}+\LL_{q'}\lp\mu\rp\rp.
    \end{align}
\end{lemma}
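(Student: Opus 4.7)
Both inequalities follow by exploiting the Legendre structure together with the a priori bounds on $H$ and $H_p$ established in Lemma \ref{lem:Hbound}. Set $\alpha^*(x):=-H_p\bigl(t,x,p(x),\mu\bigr)$; the fixed-point relation \eqref{eq:MonoFPmu} is precisely $\mu=(I_d,\alpha^*)\#m$, so
\[
\LL_{q'}(\mu)^{q'}=\int_{\TT^d_a}|\alpha^*(x)|^{q'}\,dm(x),\qquad \LL_{\infty}(\mu)=\|\alpha^*\|_{L^\infty(m)}.
\]

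The pointwise estimate \eqref{eq:aprioriLinf} is then essentially a direct consequence of \eqref{eq:Hpbound}: applied at each $x\in\supp m$ it yields $|\alpha^*(x)|\leq C_0\bigl(1+|p(x)|^{q-1}+\LL_{q'}(\mu)\bigr)$; taking the supremum over $x$ and adjusting $C_0$ produces the announced inequality.

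For \eqref{eq:aprioriLq'} the strategy is to combine Legendre duality with the coercivity of $L$. Since $\alpha^*(x)$ realises the supremum defining $H(t,x,p(x),\mu)$ in \eqref{eq:defHrd}, the optimality identity
\[
L\bigl(t,x,\alpha^*(x),\mu\bigr)=-p(x)\cdot\alpha^*(x)-H\bigl(t,x,p(x),\mu\bigr)
\]
holds pointwise. Using the coercivity \ref{hypo:Lcoer} on the left and the lower bound
\[
H(t,x,p,\mu)\geq -L(t,x,0,\mu)\geq -C_0\bigl(1+\LL_{q'}(\mu)^{q'}\bigr),
\]
obtained by plugging $\alpha=0$ into \eqref{eq:defHrd} and invoking \ref{hypo:Lbound}, I would deduce the pointwise inequality
\[
C_0^{-1}|\alpha^*(x)|^{q'}\leq |p(x)|\,|\alpha^*(x)|+2C_0\bigl(1+\LL_{q'}(\mu)^{q'}\bigr).
\]
Integrating against $m$ and applying Hölder gives
\[
C_0^{-1}\LL_{q'}(\mu)^{q'}\leq \|p\|_{L^q(m)}\LL_{q'}(\mu)+2C_0\bigl(1+\LL_{q'}(\mu)^{q'}\bigr),
\]
and a final Young-type splitting $\|p\|_{L^q(m)}\LL_{q'}(\mu)\leq \tfrac{\lambda}{q'}\LL_{q'}(\mu)^{q'}+\tfrac{1}{q\lambda^{q-1}}\|p\|_{L^q(m)}^q$, with the parameter $\lambda$ tuned so that all $\LL_{q'}(\mu)^{q'}$ contributions on the right can be absorbed into the $C_0^{-1}\LL_{q'}(\mu)^{q'}$ term on the left, delivers the announced bound with the explicit constants $4C_0^2$ and $(q')^{q-1}(2C_0)^q/q$.

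The only delicate point is the calibration of $\lambda$ and the careful bookkeeping of the numerical constants; conceptually, everything is dictated by Legendre duality, the coercivity \ref{hypo:Lcoer}, the bound \ref{hypo:Lbound} at $\alpha=0$, and the fact that $\mu$-integration of $|\alpha|^{q'}$ reduces to $m$-integration of $|\alpha^*(x)|^{q'}$ via the fixed-point relation.
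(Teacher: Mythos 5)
Your treatment of \eqref{eq:aprioriLinf} is fine and matches the paper: it is a pointwise application of \eqref{eq:Hpbound} on $\supp m$. The proof of \eqref{eq:aprioriLq'}, however, has a genuine gap that cannot be repaired along the route you chose. After combining the optimality identity with the coercivity \ref{hypo:Lcoer} and the bound $H\geq -L(t,x,0,\mu)\geq -C_0\lp1+\LL_{q'}\lp\mu\rp^{q'}\rp$, you arrive at
\begin{equation*}
    C_0^{-1}\LL_{q'}\lp\mu\rp^{q'}
    \leq
    \norm{p}{L^q(m)}\,\LL_{q'}\lp\mu\rp
    +2C_0\lp1+\LL_{q'}\lp\mu\rp^{q'}\rp,
\end{equation*}
and you propose to absorb \emph{all} the $\LL_{q'}\lp\mu\rp^{q'}$ contributions on the right into the left-hand side by tuning the Young parameter. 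But the Young splitting only controls the cross term $\norm{p}{L^q(m)}\LL_{q'}\lp\mu\rp$; the term $2C_0\,\LL_{q'}\lp\mu\rp^{q'}$ is already a full power $q'$ and can only be absorbed if $2C_0<C_0^{-1}$, i.e.\ $C_0^2<1/2$, which nothing in the assumptions guarantees. Since the target inequality \eqref{eq:aprioriLq'} contains no $\LL_{q'}\lp\mu\rp$ on its right-hand side at all, the $\mu$-dependence must be genuinely eliminated, not merely absorbed, and your chain of inequalities cannot do that: both \ref{hypo:Lcoer} and \ref{hypo:Lbound} permit $L$ to depend on $\LL_{q'}\lp\mu\rp^{q'}$ with the non-small coefficient $C_0$.

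The missing ingredient is the monotonicity assumption \ref{hypo:LMono}, which your argument never invokes. The paper tests \ref{hypo:LMono} against the pair $\lp\mu,\,m\otimes\delta_0\rp$; this swaps the measure argument of $L$ from $\mu$ to $m\otimes\delta_0$ in the term where coercivity is applied, and since $\LL_{q'}\lp m\otimes\delta_0\rp=0$ the dangerous $C_0\,\LL_{q'}\lp\mu\rp^{q'}$ error never appears on the wrong side. The remaining term $\int\lp L\lp t,x,\aa^{\mu}(x),\mu\rp-L\lp t,x,0,\mu\rp\rp dm$ is then handled at \emph{fixed} $\mu$ by convexity of $L$ in $\aa$ (Lemma \ref{lem:Lstrconv}), giving $\int\aa\cdot L_{\aa}\,d\mu=-\int\aa^{\mu}\cdot p\,dm$, after which Hölder and Young close the estimate exactly as you intended. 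This is also why the paper stresses that these are the first a priori estimates in the MFGC literature obtained \emph{from} the monotonicity assumption: without it (or a smallness hypothesis on the $\mu$-dependence of $L$), the bound \eqref{eq:aprioriLq'} is not reachable by duality and coercivity alone.
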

\begin{proof}
    We use \ref{hypo:LMono}
    with $m\otimes\delta_0$
    and $\mu$ satisfying \eqref{eq:MonoFPmu},
    \begin{equation}
        \label{eq:LMono_apriori}
        \int_{\TT^d_a\times\RR^d}
        \lp L\lp t,x,\aa, \mu \rp
        -L\lp t,x,\aa, m\otimes\delta_0 \rp\rp
        d\mu(x,\aa)
        +
        \int_{\TT^d_a}
        \lp L\lp t,x,0, m\otimes\delta_0 \rp
        -L\lp t,x,0, \mu\rp\rp
        dm(x)
        \geq
        0.
    \end{equation}
    From \ref{hypo:Lbound},
    we obtain
    \begin{equation}
        \label{eq:Lbounded}
        \int_{\TT^d_a}
        L\lp t,x,0, m\otimes\delta_0 \rp
        dm(x)
        \leq
        C_0. 
    \end{equation}
    The latter two inequalities,
    \ref{hypo:Lcoer}
    and the convexity of $L$
    (stated in Lemma \ref{lem:Lstrconv})
    yield
    \begin{align*}
        C_0^{-1}
        \int_{\TT^d_a\times\RR^d}
        \labs\aa\rabs^{q'}d\mu(x,\aa)
        -C_0
        &\leq
        C_0+
        \int_{\TT^d_a}
        \lp L\lp t,x,\aa^{\mu}(x), \mu \rp
        -L\lp t,x,0, \mu \rp\rp dm(x)
        \\
        &\leq
        C_0+
        \int_{\TT^d_a\times\RR^d}
        \aa\cdot L_{\aa}\lp t,x,\aa, \mu \rp
        d\mu(x,\aa),
    \end{align*}
    where $\aa^{\mu}$ is defined
    in paragraph \ref{subsec:notaMono}.
    We recall that
    $p(x)=-L_{\aa}\lp t,x,\aa^{\mu}(x),\mu\rp$.
    Using the inequality
    $yz\leq
    \frac{y^{q'}}{c^{q'}q'}
    +\frac{c^qz^{q}}{q}$
    which holds for any $y,z,c>0$,
    we obtain
    \begin{equation*}
        \frac1{C_0}\int_{\TT^d_a\times\RR^d}
        \labs\aa\rabs^{q'}d\mu(x,\aa)
        \leq
        2C_0
        +\frac{\lp 2C_0q'\rp^{\frac{q}{q'}}}q
        \int_{\TT^d_a}\labs p(x)\rabs^q dm(x)
        +\frac1{2C_0}
        \int_{\TT^d_a\times\RR^d}
        \labs\aa\rabs^{q'}d\mu(x,\aa).
    \end{equation*}
    This and $\frac{q}{q'}+1=q$
    imply \eqref{eq:aprioriLq'}.
    This and \ref{eq:Hpbound} implies 
    \ref{eq:aprioriLinf},
    we recall that we assume $C_0=\Ct_0$.    
\end{proof}

Here, we shall use
Leray-Schauder fixed point theorem as
stated in \cite{MR1814364} Theorem $11.6$.
\begin{theorem}[Leray-Schauder fixed point theorem]
    \label{thm:Leray}
    Let $\cB$ be a Banach space and let
    $\Psi$ be a compact mapping from
    $[0,1]\times\cB$ into $\cB$
    such that $\Psi(0,x)=0$ for all $x\in\cB$.
    Suppose that there exists a constant $C$ such that
    \begin{equation*}
        \norm{x}{\cB}\leq C,
    \end{equation*}
    for all $\lp \th,x\rp\in[0,1]\times\cB$
    satisfying $x=\Psi(\th,x)$.
    Then the mapping $\Psi(1,\cdot)$
    of $\cB$ into itself
    has a fixed point.
\end{theorem}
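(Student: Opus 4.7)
The plan is to deduce this fixed point statement from Leray--Schauder topological degree theory applied to compact perturbations of the identity. First I would fix any radius $R>C$, where $C$ is the constant provided by the a priori bound, and work on the open ball $B_R\subset\cB$ centered at the origin. Then I would introduce the one-parameter family of maps $\Phi_{\th}=\mathrm{Id}-\Psi(\th,\cdot)$ for $\th\in[0,1]$, which are compact perturbations of the identity since $\Psi$ is a compact mapping.

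The crucial observation is that the hypothesis $\|x\|_{\cB}\leq C$ for any fixed point of $\Psi(\th,\cdot)$ implies $\Phi_{\th}(x)\neq 0$ for every $x\in\partial B_R$ and every $\th\in[0,1]$: indeed, $\Phi_{\th}(x)=0$ would mean $x=\Psi(\th,x)$ and hence $\|x\|_{\cB}\leq C<R$, contradicting $x\in\partial B_R$. Consequently the Leray--Schauder degree $\deg(\Phi_{\th},B_R,0)$ is well defined for each $\th\in[0,1]$.

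Using the joint compactness of $\Psi$ on $[0,1]\times\cB$, the map $(\th,x)\mapsto \Psi(\th,x)$ is an admissible compact homotopy, so the homotopy invariance of the Leray--Schauder degree yields
\begin{equation*}
\deg(\Phi_{0},B_R,0)
=
\deg(\Phi_{1},B_R,0).
\end{equation*}
Since $\Psi(0,\cdot)\equiv 0$, we have $\Phi_{0}=\mathrm{Id}$, whose degree is $1$ by the normalization axiom. Therefore $\deg(\Phi_{1},B_R,0)=1\neq 0$, and by the solution property of the degree there exists $x\in B_R$ with $\Phi_{1}(x)=0$, i.e. $x=\Psi(1,x)$, which is the desired fixed point.

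The main obstacle, and the reason one usually cites this theorem rather than proving it from scratch, is the construction of the Leray--Schauder degree itself: one must extend the Brouwer degree from finite dimensions to compact perturbations of the identity on an infinite-dimensional Banach space, via Schauder-type finite-dimensional approximations and a limiting procedure, and then verify the three axioms (normalization, additivity, homotopy invariance) that make the above argument work. Once this machinery is granted, the proof of the stated theorem reduces to the short homotopy argument sketched above.
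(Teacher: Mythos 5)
Your degree-theoretic argument is correct: the a priori bound keeps all fixed points of $\Psi(\th,\cdot)$ strictly inside the ball $B_R$ with $R>C$, so $\mathrm{Id}-\Psi(\th,\cdot)$ never vanishes on $\partial B_R$, the joint compactness of $\Psi$ on $[0,1]\times\overline{B_R}$ makes the family an admissible homotopy of compact perturbations of the identity, and normalization at $\th=0$ (where the map is the identity) forces $\deg(\mathrm{Id}-\Psi(1,\cdot),B_R,0)=1$, hence a fixed point. Note, however, that the paper does not prove this statement at all: it is quoted verbatim as Theorem $11.6$ of Gilbarg--Trudinger and used as a black box. The proof given in that reference takes a different and more elementary route than yours: rather than constructing the Leray--Schauder degree, it reduces the statement to the Schauder fixed point theorem by an explicit reparametrization -- one builds from $\Psi$ a single compact self-map of a closed ball whose fixed points are forced, via the a priori bound, to correspond to fixed points of $\Psi(1,\cdot)$. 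That route needs only Schauder's theorem (hence only the Brouwer degree in finite dimensions plus a compactness approximation), whereas yours requires the full construction of the infinite-dimensional degree -- a heavier prerequisite, as you acknowledge, but one that yields the stronger and reusable information $\deg=1$ rather than bare existence. Either argument is a legitimate proof of the statement as used in the paper.
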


From Lemma \ref{lem:apriorimu}
and Theorem \ref{thm:Leray},
we obtain the following existence
result for a fixed point \eqref{eq:muttda}.

\begin{lemma}
    \label{lem:FPmuLS}
    Assume
    \ref{hypo:Lreg}-\ref{hypo:Lbound}.
    For $t\in[0,T]$,
    $m\in\cP\lp\TT^d_a\rp$
    and 
    $p\in C^0\lp\TT^d_a;\RR^d\rp$,
    there exists a unique
    $\mu\in\cP\lp\TT^d_a\times\RR^d\rp$
    such that
        $\mu
        =
        \lp I_d,
        -H_p\lp t,\cdot,p(\cdot),\mu\rp\rp\# m$.
    Moreover, $\mu$ satisfies the inequality
    stated in Lemma \ref{lem:apriorimu}.
\end{lemma}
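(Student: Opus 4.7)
The plan is to invoke the Leray-Schauder fixed point theorem (Theorem \ref{thm:Leray}) in a suitable Banach space of control fields, using Lemma \ref{lem:apriorimu} to bound fixed points along a homotopy that interpolates the Lagrangian with a trivial, $\mu$-independent one, and then to deduce uniqueness from \ref{hypo:LMono} combined with the strict convexity of $L$ in $\alpha$ (Lemma \ref{lem:Lstrconv}).

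First I identify any admissible $\mu$ with its control $\alpha\in L^{\infty}(m;\RR^d)$ through $\mu=(I_d,\alpha)\# m$, which turns \eqref{eq:MonoFPmu} into the equation $\alpha=-H_p\lp t,\cdot,p(\cdot),(I_d,\alpha)\# m\rp$ to be solved in $\cB=L^{\infty}(m;\RR^d)$. Set the reference control $\alpha_0(x)=-H_p\lp t,x,p(x),m\otimes\delta_0\rp$, which lies in $C^0\lp\TT^d_a;\RR^d\rp$ with $\norminf{\alpha_0}\leq C_0\lp 1+\norminf{p}^{q-1}\rp$ by \eqref{eq:Hpbound} since $\LL_{q'}\lp m\otimes\delta_0\rp=0$. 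Introduce the homotopy $\Psi:[0,1]\times\cB\to\cB$ defined by
\[\Psi(\th,\beta)(x)=-H_p\Bigl(t,x,p(x),(1-\th)\,m\otimes\delta_0+\th\,(I_d,\alpha_0+\beta)\# m\Bigr)-\alpha_0(x).\]
By construction $\Psi(0,\beta)=0$ for every $\beta\in\cB$, and a fixed point of $\Psi(1,\cdot)$ yields a solution $\alpha=\alpha_0+\beta$ of \eqref{eq:MonoFPmu}.

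The compactness of $\Psi$ would be obtained through a weak$^*$ extraction: if $(\th_n,\beta_n)$ is bounded in $[0,1]\times\cB$, then $(I_d,\alpha_0+\beta_n)\# m$ stays in a fixed $\cP_{\infty,R}\lp\TT^d_a\times\RR^d\rp$, which is weak$^*$-compact; combined with the joint continuity of $H_p$ on $\cP_{\infty,R}$ granted by \ref{hypo:Lreg} and with the compactness of $\TT^d_a$, this produces uniform convergence of $\Psi(\th_n,\beta_n)(\cdot)$ in $x$ along a subsequence, hence convergence in $\cB$. For the a priori bound, a fixed point $\beta=\Psi(\th,\beta)$, translated back via $\alpha=\alpha_0+\beta$, satisfies exactly \eqref{eq:MonoFPmu} with $L$ replaced by the perturbed Lagrangian $L_\th(t,x,\alpha,\mu)=L\lp t,x,\alpha,(1-\th)m\otimes\delta_0+\th\mu\rp$. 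A direct inspection shows that $L_\th$ inherits \ref{hypo:Lreg}--\ref{hypo:Lbound} with the same constants, the crucial identity being $\LL_{q'}\lp(1-\th)m\otimes\delta_0+\th\mu\rp^{q'}=\th\,\LL_{q'}(\mu)^{q'}\leq\LL_{q'}(\mu)^{q'}$, so Lemma \ref{lem:apriorimu} applies uniformly in $\th$ and bounds $\LL_{\infty}\lp(I_d,\alpha)\# m\rp$, hence $\norminf{\beta}$, by a constant depending only on $C_0$, $\norminf{p}$ and $\norm{p}{L^q(m)}$. Theorem \ref{thm:Leray} then delivers a fixed point of $\Psi(1,\cdot)$.

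For uniqueness, assume $\mu^i=(I_d,\alpha^i)\# m$, $i=1,2$, are two solutions of \eqref{eq:MonoFPmu}. Legendre duality gives $-L_\alpha\lp t,x,\alpha^i,\mu^i\rp=p(x)$ at $m$-a.e. $x$, and the strict convexity of $L(t,x,\cdot,\mu)$ yields, on $\lc\alpha^1\neq\alpha^2\rc$, the strict inequalities
\begin{align*}
L\lp t,x,\alpha^1,\mu^2\rp-L\lp t,x,\alpha^2,\mu^2\rp&>-p(x)\cdot\lp\alpha^1-\alpha^2\rp,\\
L\lp t,x,\alpha^2,\mu^1\rp-L\lp t,x,\alpha^1,\mu^1\rp&>p(x)\cdot\lp\alpha^1-\alpha^2\rp.
\end{align*}
Adding and integrating against $m$ produces $\int\lp L(t,x,\beta,\mu^1)-L(t,x,\beta,\mu^2)\rp d(\mu^1-\mu^2)(x,\beta)<0$, in contradiction with \ref{hypo:LMono} unless $\alpha^1=\alpha^2$ $m$-a.e. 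The main obstacle will be the compactness step in $\cB=L^\infty(m)$: although the $L^\infty$ bound enforces the needed uniform support control, the convergence of the measures $(I_d,\alpha_0+\beta_n)\# m$ is only weak$^*$, so the joint continuity of $H_p$ on $\cP_{\infty,R}$ and the compactness of $\TT^d_a$ must be exploited together to promote pointwise to uniform convergence of the image.
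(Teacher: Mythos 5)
Your proof is correct, and it reaches the conclusion by a genuinely different homotopy than the paper's. The paper also applies Leray--Schauder (Theorem \ref{thm:Leray}) with the bound from Lemma \ref{lem:apriorimu}, but it deforms the Lagrangian itself, setting $L^{\po,\ll}=\ll L+(1-\ll)\bigl(\frac{|\aa|^{q'}}{q'}-\aa\cdot \po(x)\bigr)$ with $\po=p$, so that at $\ll=0$ the Hamiltonian is the explicit power $\frac1q|p-\po(x)|^q$ and the induced map vanishes at $p=\po$; the price is that the cross term $\aa\cdot\po(x)$ forces an enlargement of the constant $C_0$ before \ref{hypo:Lcoer}--\ref{hypo:Lbound} can be re-verified. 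You instead freeze the Lagrangian and deform only its measure argument toward $m\otimes\delta_0$, recentering by $\aa_0=-H_p(t,\cdot,p(\cdot),m\otimes\delta_0)$ to get $\Psi(0,\cdot)=0$; the identity $\LL_{q'}\lp(1-\th)m\otimes\delta_0+\th\mu\rp^{q'}=\th\,\LL_{q'}(\mu)^{q'}$ then makes \ref{hypo:Lreg}--\ref{hypo:Lbound} (and the monotonicity \ref{hypo:LMono}, after writing $\mu^1_\th-\mu^2_\th=\th(\mu^1-\mu^2)$) hold for $L_\th$ with literally the same constants, which is a cleaner way to get the $\th$-uniform bound from Lemma \ref{lem:apriorimu}. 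Your compactness step is sound: the image of $\Psi$ consists of continuous functions, and uniform convergence in $x$ follows from weak* compactness of $\cP_{\infty,R}\lp\TT^d_a\times\RR^d\rp$ and Heine's theorem exactly as in the paper (which works in $C^0\lp\TT^d_a;\RR^d\rp$ rather than your $L^{\infty}(m)$ --- either Banach space works, since the fixed point map always outputs a continuous representative). Your uniqueness argument via strict convexity and \ref{hypo:LMono} is the standard one the paper delegates to a citation, and your sign bookkeeping is right.
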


In the following proof,
we will take advantage of the flexibily
offered when making all assumptions
on the Lagrangian,
instead of the Hamiltonian.
We will introduce a sequence
of new Lagrangians.
The associated Hamiltonians
may not admit explicit form;
therefore it would be difficult
to check assumptions on them.
Here on the one hand,
checking the assumptions
on the new Lagrangians is straightforward.
On the other hand,
we obtain the same conclusions on the
new Hamiltonian as stated in Lemma \ref{lem:Hbound}.

\begin{proof}
    Take $(t,\po,m)$
    satisfying the same assumptions as
    $(t,p,m)$ in
    Lemma \ref{lem:FPmuLS}.
    In order to use the Leray-Schauder
    fixed point theorem later,
    we introduce the following family of Lagrangians
    indexed by $\ll\in[0,1]$,
    \begin{equation}
        \label{eq:Lpo}
        L^{\po,\ll}\lp x,\aa,\mu\rp
        =
        \ll
        L\lp t,x,\aa,\mu\rp
        +(1-\ll)
        \lp\frac{\labs\aa\rabs^{q'}}{q'}
        -\aa\cdot \po(x)\rp,
    \end{equation}
    for $\lp x,\aa,\mu\rp\in\TT^d_a\times\RR^d\times\cP\lp\TT^d_a\times\RR^d\rp$.
    We denote by $H^{\po,\ll}$ the Legendre
    transform of $L^{\po,\ll}$.
    For $\ll=0$ it satisfies
    \begin{equation}
        \label{eq:Hpo0}
        H^{\po,0}\lp x,p,\mu\rp
        =
        \frac1q\labs p-\po(x)\rabs^q.
    \end{equation}
    From Young inequality,
    we obtain that
    \begin{align*}
        \labs\aa\cdot\po(x)\rabs
        \leq
        \frac{\labs\aa\rabs^{q'}}{2q'}
        +
        \frac{2^{q-1}}{q}\norminf{p}.
    \end{align*}
    Therefore,
    up to changing $C_0$
    into $\max\lp \frac1{2q'},\frac{2^{q-1}}{q}\norminf{p},C_0\rp$,
    we may assume that
    $L^{\po,\ll}$
    satisfies 
    \ref{hypo:Lreg}-\ref{hypo:Lbound},
    with the same constant $C_0$
    for any $\ll\in[0,1]$.
    The map
    $\lp\ll,x,p,\mu\rp\mapsto -H^{\po,\ll}_p\lp x,p,\mu\rp$
    is continuous on 
    $[0,1]\times\TT^d_a\times\RR^d\times\cP_{\infty,R}\lp\TT^d_a\times\RR^d\rp$,
    for any $R>0$,
    by the same arguments as in the proof
    of Lemma \ref{lem:Hbound}.

    For $\aa\in C^0\lp\TT^d_a;\RR^d\rp$,
    we set 
    $\mu=\lp I_d,\aa\rp\#m\in\cP\lp\TT^d_a\times\RR^d\rp$
    and $\aao(x)=-H_p^{\po,\ll}\lp x,\po(x),\mu\rp$,
    for $x\in\TT^d_a$.
    We define the map $\Psi$,
    from $[0,1]\times C^0\lp\TT^d_a;\RR^d\rp$
    to $C^0\lp\TT^d_a;\RR^d\rp$,
    by $\Psi\lp\ll,\aa\rp=\aao$.
    If $\aa$ is a fixed point of $\Psi(1,\cdot)$,
    then $\mu$ defined as above 
    satisfies the fixed point in
    Lemma \ref{lem:FPmuLS}.
    Conversely, if $\mu$ satisfies the
    fixed point in 
    Lemma \ref{lem:FPmuLS},
    then $\aa^{\mu}$ (defined in paragraph
    \ref{subsec:notaMono})
    is a fixed point of $\Psi(1,\cdot)$.

    The map $\Psi$ is continuous by the continuity
    of $\lp\ll,x,p,\mu\rp\mapsto -H^{\po,\ll}_p\lp x,p,\mu\rp$.
    For $R>0$, the set $A_R$, defined by
    $A_R=[0,1]\times\TT^d_a\times B_{\RR^d}\lp 0,R\rp
    \times\cP_{\infty,R}\lp\TT^d_a\times\RR^d\rp$,
    is compact.
    By Heine theorem,
    the map
    $\lp\ll,x,p,\mu\rp\mapsto -H^{\po,\ll}_p\lp x,p,\mu\rp$
    is uniformly continuous on $A_R$.
    Here, note that we use the fact that
    $\cP_{\infty,R}\lp\TT^d_a\times\RR^d\rp$
    is a metric space since 
    the weak* topology coincides with the
    topology induced by the
    $1$-Wassertein distance
    on $\cP_{\infty,R}\lp\TT^d_a\times\RR^d\rp$.
    Heine theorem also implies that $\po$
    is uniformly continuous.
    Therefore, $\Psi$ is a compact mapping from 
    $[0,1]\times C^0\lp\TT^d_a;\RR^d\rp$
    to $C^0\lp\TT^d_a;\RR^d\rp$, i.e.
    it maps bounded subsets of
    $[0,1]\times C^0\lp\TT^d_a;\RR^d\rp$
    into relatively compact subsets of
    $C^0\lp\TT^d_a;\RR^d\rp$:
    this comes from the latter observation
    and Arzelà-Ascoli theorem.

    Take $\aa$ a fixed point
    of $\Psi(\ll,\cdot)$,
    for $\ll\in[0,1]$,
    Lemma \ref{lem:apriorimu}
    implies that $\norminf{\aa}$
    is bounded by a constant $C$
    which does not depend on $\ll$.
    
    Moreover, it is straightforward to
    check that $\Psi(0,\cdot)=0$.
    Leray-Schauder Theorem \ref{thm:Leray}
    implies that there exists a fixed point
    of the map $\aa\mapsto\Psi\lp1,\aa\rp$,
    which concludes the existence part of the proof.

    The proof of uniqueness relies on
    \ref{hypo:LMono} and the strict convexity of $L$,
    see \cite{MR3805247} Lemma $5.2$ for the detailed proof.
\end{proof}

\subsection{The continuity of the fixed point in time}
\label{subsec:FPmugen}
The fixed point result stated in Lemma \ref{lem:FPmuLS} yields
the existence of a map $(t,p,m)\mapsto \mu$.
The continuity of this map is addressed in the following lemma:
\begin{lemma}
    \label{lem:contmu}
    Assume
    \ref{hypo:Lreg}-\ref{hypo:Lbound}.
    Let
    $\lp t^n,m^n,p^n\rp_{n\in\NN}$
    be a sequence in
    $[0,T]\times
    \cP\lp\TT^d_a\rp\times
    C^0\lp\TT^d_a;\RR^d\rp$.
    Assume that
    \begin{itemize}
        \item
            $t^n\to_{n\to\infty}t\in[0,T]$,
        \item
            $\lp p^n\rp_{n\in\NN}$ is uniformly convergent
            to $p\in C^0\lp\TT^d_a;\RR^d\rp$,
        \item
            $\lp m^n\rp_{n\in\NN}$
            tends to $m$ in the weak* topology.
    \end{itemize}
    We define $\mu^n$ and $\mu$ as the unique solutions
    of the fixed point relation of Lemma
    \ref{lem:FPmuLS}
    respectively associated to
    $\lp t^n,m^n,p^n\rp$
    and 
    $\lp t,m,p\rp$,
    for $n\in\NN$.
    Then the sequence $\lp\mu^n\rp_{n\in\NN}$
    tends to $\mu$
    in $\cP\lp\TT^d_a\times\RR^d\rp$
    equipped with the weak* topology.
\end{lemma}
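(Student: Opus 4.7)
The plan is to use a standard compactness-uniqueness argument, exploiting the a priori bounds of Lemma \ref{lem:apriorimu} and the uniqueness statement in Lemma \ref{lem:FPmuLS}.

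First, I would establish tightness. Since $(p^n)$ converges uniformly, $M := \sup_n \norminf{p^n}$ is finite, and by \eqref{eq:aprioriLinf} there exists $R>0$ depending only on $C_0$, $q$, $M$ such that $\LL_{\infty}(\mu^n) \leq R$ for every $n$. Thus every $\mu^n$ is supported in the compact set $K_R := \TT^d_a \times \overline{B}_{\RR^d}(0,R)$, so $(\mu^n)$ lies in $\cP_{\infty,R}(\TT^d_a\times\RR^d)$, which is weak* compact and metrizable by the $1$-Wasserstein distance. I extract a weak* convergent subsequence, still denoted $(\mu^n)$, with limit $\nu \in \cP_{\infty,R}(\TT^d_a\times\RR^d)$.

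Second, I would identify $\nu$ as a fixed point for $(t,m,p)$. Fix $\varphi \in C_b(\TT^d_a \times \RR^d)$; then
\begin{equation*}
    \int \varphi\, d\mu^n = \int_{\TT^d_a} \varphi\bigl(x, -H_p(t^n, x, p^n(x), \mu^n)\bigr)\, dm^n(x).
\end{equation*}
By Lemma \ref{lem:Hbound}, $H_p$ is continuous on $[0,T]\times\TT^d_a\times\RR^d\times\cP_{\infty,R}(\TT^d_a\times\RR^d)$, hence uniformly continuous on the compact subset $[0,T]\times\TT^d_a\times\overline{B}_{\RR^d}(0,M+1)\times\cP_{\infty,R}(\TT^d_a\times\RR^d)$. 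Combining this uniform continuity with $t^n\to t$, $p^n\to p$ uniformly, and $\mu^n\to\nu$ weakly, the map $x\mapsto -H_p(t^n,x,p^n(x),\mu^n)$ converges uniformly on $\TT^d_a$ to $x\mapsto -H_p(t,x,p(x),\nu)$. Writing $\Phi^n$ and $\Phi$ for $\varphi$ composed with these two maps, uniform continuity of $\varphi$ on $K_R$ yields $\Phi^n\to\Phi$ uniformly on $\TT^d_a$. Splitting
\begin{equation*}
    \int\Phi^n\, dm^n - \int\Phi\, dm = \int(\Phi^n-\Phi)\, dm^n + \Bigl(\int\Phi\, dm^n - \int\Phi\, dm\Bigr),
\end{equation*}
the first term vanishes by uniform convergence (since $m^n$ are probability measures) and the second by the weak* convergence of $m^n$ to $m$. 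Passing to the limit in the displayed pushforward identity gives
\begin{equation*}
    \int \varphi\, d\nu = \int_{\TT^d_a}\varphi\bigl(x,-H_p(t,x,p(x),\nu)\bigr)\, dm(x),
\end{equation*}
which, valid for every $\varphi\in C_b$, means $\nu = (I_d, -H_p(t,\cdot,p(\cdot),\nu))\# m$.

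Third, by the uniqueness part of Lemma \ref{lem:FPmuLS}, $\nu = \mu$. Since every subsequence of $(\mu^n)$ has a further subsequence converging to the same limit $\mu$, the full sequence converges to $\mu$ in the weak* topology. The main technical delicacy is the uniform-in-$x$ convergence of $-H_p(t^n,x,p^n(x),\mu^n)$ to $-H_p(t,x,p(x),\nu)$: this rests on the uniform $L^\infty$-bound on the controls, which confines the measure argument of $H_p$ to a set where weak* convergence coincides with $W_1$-convergence, and on the joint continuity supplied by Lemma \ref{lem:Hbound}, which is then upgraded to uniform continuity by compactness.
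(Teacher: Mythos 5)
Your proof is correct and follows essentially the same route as the paper's: a uniform bound on the supports of $\mu^n$ from Lemma \ref{lem:apriorimu}, extraction of a weak* convergent subsequence, identification of the limit as a fixed point via the continuity of $H_p$ from Lemma \ref{lem:Hbound} (yielding uniform convergence of the optimal feedback controls), uniqueness from Lemma \ref{lem:FPmuLS}, and the subsequence principle. You simply spell out more explicitly the uniform-continuity upgrade and the splitting used to pass to the limit in the pushforward identity.
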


\begin{proof}
    The sequence $\lp p^{n}\rp_{n\in\NN}$
    it is uniformly bounded in the norm
    $\norminf{\cdot}$.
    Therefore $\lp\mu^n\rp_{n\in\NN}$
    is uniformly compactly supported
    by Lemma \ref{lem:apriorimu}.
    Thus we can extract a subsequence
    $\lp\mu^{\vp(n)}\rp_{n\in\NN}$
     convergent to some limit
    $\mut\in\cP\lp\RR^d\times\RR^d\rp$
    in the weak* toplogy on measures.

    We recall that $\aa^{\mu}$
    is defined in paragraph
    \ref{subsec:notaMono}.
    Here, since $\mu^{\vp(n)}$
    and $\mu$ are fixed points
    like in Lemma \ref{lem:FPmuLS},
    they satisfy:
    \begin{align*}
        \aa^{\mu^{\vp(n)}}(x)
        &=
        -H_p\lp t^{\vp(n)},x,p^{\vp(n)}(x),\mu^{\vp(n)}\rp,
        \\
        \aa^{\mu}(x)
        &=
        -H_p\lp t,x,p(x),\mu\rp,
    \end{align*}
    for $x\in\TT^d_a$ and $n\in\NN$.
    From the continuity of $H_p$
    stated in
    Lemma \ref{lem:Hbound},
    $\lp\aa^{\vp(n)}\rp_{n\in\NN}$
    tends uniformly to the function
    $\aat:x\mapsto-H_p\lp t,x,p,\mut\rp$.
    Then
    $\lp\lp I_d,\aa^{\vp(n)}\rp\#m^n\rp_{n\in\NN}$
    tends to 
    $\lp I_d,\aat\rp\#m$
    in the weak* topology.
    Hence $\mut$ satisfies the same
    fixed point relation as $\mu$;
    by uniqueness we deduce that
    $\mut=\mu$.
    This implies that all the convergent subsequences
    of $\lp\mu^{n}\rp_{n\in\NN}$
    have the same limit $\mu$,
    thus the whole sequence converges to $\mu$.
\end{proof}

Lemma \ref{lem:FPmuLS} states that
for all time the fixed point
\eqref{eq:muttda} has a unique solution.
Then Lemma \ref{lem:contmu}
yields the continuity of the map defined
by the fixed point under suitable assumptions.
Therefore, the conclusion of step
\ref{step:FPmucont}
and the Lemma \ref{lem:FPmucont}
are straightforward consequences of
these two lemmas.

\begin{remark}
    All the conclusions of this section
    hold when we relax Assumption \ref{hypo:LMono},
    assuming that the inequality holds only
    when $\mu^1$ and $\mu^2$
    have the same first marginal.
    Some applications of MFGC do not satisfy
    \ref{hypo:LMono}, but satisfy the above-mentioned
    relaxed monotonicity assumption.
    This is the case of the MFG version of the
    Almgren and Chriss' model for price impact and
    high-frenquency trading, discussed in
    \cite{MR3805247,MR3752669,MR3325272,2019arXiv190411292K}.
    
    However, the a priori estimates in the next section
    do not hold under this relaxed monotonicity assumption.
    We refer to \cite{2019arXiv190411292K} for estimates
    which do not rely on \ref{hypo:LMono}
    (Assumptions {\bf FP1} and {\bf FP2}
    in \cite{2019arXiv190411292K} are unnecessary
    if $L$ satisfies the relaxed monotonicity assumption).
\end{remark}

\section{A priori estimates for the
solutions to \eqref{eq:MFGCttda}}
\label{sec:aprioriMono}
In order to use the Leray-Schauder
fixed point theorem later,
we introduce the following family of Lagrangians
indexed by $\th\in(0,1]$,
\begin{equation}
    \label{eq:Lth}
    L^{\th}\lp t,x,\aa,\mu\rp
    =
    \th
    L\lp t,x,\th^{-1}\aa,\Th(\mu)\rp,
\end{equation}
where the map $\Th:\cP\lp\TT^d_a\times\RR^d\rp
\rightarrow\cP\lp\TT^d_a\times\RR^d\rp$
is defined by
$\Th(\mu)=\lp I_d\otimes\th^{-1}I_d\rp\#\mu$.
Then the Hamiltonian defined as the Legendre
transform of $L^{\th}$ is given by
\begin{equation}
    \label{eq:Hth}
    H^{\th}\lp t,x,p,\mu\rp
    =
    \th
    H\lp t,x,p,\Th(\mu)\rp.
\end{equation}
The definition of the Hamiltonian can naturally be extended
to $\th=0$ by $H^0=0$,
the associated Lagrangian is $L^0=0$ if $\aa=0$
and $L^0=\infty$ otherwise.
We introduce the following system of MFGC,
\begin{subequations}\label{eq:MFGCth}
     \begin{empheq}{align}
    \label{eq:HJBth}
    &-\ptt u(t,x)
    - \nu\Delta u(t,x)
    + H^{\th}\lp t,x, \nabla_xu(t,x),\mu(t)\rp
    =
    \th f(t,x,m(t))
    &\text{ in }
    (0,T)\times\TT^d_a,\\
    \label{eq:FPKth}
    & \ptt m(t,x)
    - \nu\Delta m(t,x)
    -\divo\lp H^{\th}_p\lp t,x,\nabla_xu(t,x),\mu(t)\rp m\rp
    =
    0
    &\text{ in }
    (0,T)\times\TT^d_a,\\
    \label{eq:muth}
    &\mu(t)
    = \Bigl(
    I_d,
    - H^{\th}_p\lp t,\cdot,\nabla_xu(t,\cdot),\mu(t)\rp
    \Bigr){\#}m(t)
    &\text{ in } [0,T],\\
    \label{eq:CFuth}
    &u(T,x)
    =
    \th g(x,m(T))
    &\text{ in }
    \TT^d_a,\\
    \label{eq:CImth}
    &m(0,x)
    =
    m_0(x)
    &\text{ in }
    \TT^d_a.
    \end{empheq}
\end{subequations}
When $\th=1$,
the latter system coincides with
\eqref{eq:MFGCttda}.
When $\th=0$, \eqref{eq:MFGCth} consists in a situation
in which the state of a representative agent
satisfies a non-controlled
stochastic differential equation.
Alternatively it can be interpreted as
a game in which the agents pay an infinite
price as soon as they try to use a control different than $0$.
In particular the case $\th=0$ is specific and easier than the case
when $\th>0$.
Therefore, in the rest of this section, we only consider
$\th\in(0,1]$.

Let us mention that assumptions
\ref{hypo:Lreg}-\ref{hypo:LMono}
are preserved when
replacing $L$ and $H$ by
$L^{\th}$ and $H^{\th}$ respectively.
Moreover the
inequalities from
\ref{hypo:Lcoer},
\ref{hypo:Lbound},
become respectively
\begin{align}
    \label{eq:Lthcoer}
        L^{\th}(t,x,\aa,\mu)
        &\geq
        C_0^{-1}\th^{1-q'}|\aa|^{q'}
        -C_0\th
        -C_0\th^{1-q'}\LL_{q'}\lp\mu\rp^{q'},
        \\
    \label{eq:Lthbound}
    \labs L^{\th}(t,x,\aa,\mu)\rabs
        &\leq
        C_0\th
        +C_0\th^{1-q'}\lp|\aa|^{q'}
        +\LL_{q'}\lp\mu\rp^{q'}\rp,
\end{align}
since $\LL_{q'}\lp \Th(\mu)\rp=\th^{-1}\LL_{q'}\lp\mu\rp$.
Furthermore, the conclusions
of Lemma \ref{lem:Hbound} hold
and the inequalities become respectively
\begin{align}
    \label{eq:Hthpbound}
        \labs H^{\th}_p\lp t,x,p,\mu\rp\rabs
        &\leq
        C_0\th\lp 1+|p|^{q-1}\rp
        +C_0\LL_{q'}\lp\mu\rp,
        \\
    \label{eq:Hthbound}
        \labs H^{\th}\lp t,x,p,\mu\rp\rabs
        &\leq
        C_0\th\lp 1+|p|^{q}\rp
        +C_0\th^{1-q'}\LL_{q'}\lp\mu\rp^{q'},
        \\
    \label{eq:Hthcoer}
        p\cdot H^{\th}_p\lp t,x,p,\mu\rp
        -H^{\th}\lp t,x,p,\mu\rp
        &\geq
        C_0^{-1}\th|p|^q
        -C_0\th
        -C_0\th^{1-q'}\LL_{q'}\lp\mu\rp^{q'},
        \\
    \label{eq:Hthxbound}
        \labs H^{\th}_x\lp t,x,p,\mu\rp\rabs
        &\leq
        C_0\th\lp 1+\labs p\rabs^{q}\rp
        +C_0\th^{1-q'}\LL_{q'}\lp\mu\rp^{q'}.
\end{align}
We recall that without loss of generality,
we assumed $\Ct_0=C_0$ where $\Ct_0$
is defined in Lemma \ref{lem:Hbound}.

Instead of proving Lemma \ref{lem:Monoapriorith1}
and step \ref{step:ttdaapriori},
we address the more general following lemma
which provides a priori estimates
not only for solutions to \eqref{eq:MFGCttda}
but also for solutions to \eqref{eq:MFGCth}.
This will help to use the Leray-Schauder theorem
in the next section.
\begin{lemma}
    \label{lem:Monoapriori}
    Under assumptions
    \ref{hypo:Lreg}-\ref{hypo:Monogregx},
    there exists a positive constant $C$
    which only depends on the constants in the assumptions
    and not on $a$ or $\th$,
    such that the solution to \eqref{eq:MFGCth}
    satisfies:
        $\norminf{u}
        \leq
        C\th$,
        $\norminf{\nabla_xu}
        \leq
        C\th^{\frac12}$ and
        $\displaystyle{\sup_{t\in[0,T]}} \LL_{\infty}\lp\mu(t)\rp
        \leq
        C\th$.
\end{lemma}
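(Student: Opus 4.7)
The plan is to establish the three estimates simultaneously via a bootstrap that combines the monotonicity hypothesis \ref{hypo:LMono} with a Bernstein-type gradient estimate. Since Assumptions \ref{hypo:Lreg}--\ref{hypo:Lbound} are preserved when $L$ is replaced by $L^\th$, with quantitative counterparts \eqref{eq:Lthcoer}--\eqref{eq:Hthxbound}, the whole argument amounts to a careful scaling analysis in $\th$.

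The first ingredient is a pointwise-in-time control of $\LL_{q'}(\mu(t))$ analogous to Lemma \ref{lem:apriorimu}. I would apply \ref{hypo:LMono} for $L^\th$ to the pair $\mu(t)$ and $m(t)\otimes\delta_0$, use the coercivity \eqref{eq:Lthcoer} and upper bound \eqref{eq:Lthbound}, and balance with a Young inequality whose parameter is tuned so that the arithmetic identity $(q-1)(q'-1)=1$ produces a clean $\th$-scaling, yielding
\[
\LL_{q'}(\mu(t))^{q'} \;\leq\; C\th^{q'} \;+\; C\th^{q'}\,\|\nabla_x u(t,\cdot)\|_{L^q(m(t))}^q,
\]
uniformly in $t\in[0,T]$ and $a>0$. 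The second ingredient is the energy identity: multiplying \eqref{eq:HJBth} by $m$ and \eqref{eq:FPKth} by $u$, integrating on $\TT^d_a$ and using \eqref{eq:Hthcoer} together with the previous estimate (with a Young parameter chosen to absorb the $\LL_{q'}(\mu)^{q'}$ contribution into the coercive $|\nabla_x u|^q$ term), I would obtain
\[
\th \int_0^T \int_{\TT^d_a} |\nabla_x u|^q\, m\, dx\, ds \;\leq\; C\,\|u\|_\infty + C\th.
\]

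The third ingredient, which is the main obstacle, is a Bernstein estimate for $w := |\nabla_x u|^2$. Differentiating \eqref{eq:HJBth} in $x$ and contracting with $2\nabla_x u$ yields a parabolic equation for $w$ containing the good second-order term $2\nu|\nabla_x^2 u|^2$, the transport term $H^\th_p\cdot\nabla_x w$, and forcing $2\th\nabla_x u \cdot \nabla_x f - 2\nabla_x u\cdot H^\th_x$. At a global maximum of $w$ on $[0,T]\times\TT^d_a$: if it occurs at $t=T$, the terminal condition \eqref{eq:CFuth} gives directly $w(T,\cdot) \leq C\th^2$; otherwise at an interior maximum I would combine the pointwise bound \eqref{eq:Hthxbound}, the $\LL_{q'}$-estimate from the first ingredient, and the Cauchy--Schwarz inequality $|\nabla_x^2 u|^2\geq(\Delta u)^2/d$ with $\Delta u$ read off from \eqref{eq:HJBth}, to extract $\|w\|_\infty \leq C\th$, i.e.\ $\|\nabla_x u\|_\infty \leq C\th^{1/2}$. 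Here is where the superlinear growth of $H^\th_x$ in $p$ combined with the $\mu$-dependence must be controlled, using a suitable Bernstein test function as developed in \cite{2019arXiv190411292K}; closing the bootstrap requires being sharp with the constants in the first two ingredients.

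Once the gradient bound is in hand, the pointwise identity $\alpha^\mu(t,x) = -H^\th_p(t,x,\nabla_x u,\mu(t))$ combined with \eqref{eq:Hthpbound} gives $\LL_\infty(\mu(t)) \leq C\th + C\LL_{q'}(\mu(t))$, and the first ingredient then yields $\LL_\infty(\mu(t)) \leq C\th$ uniformly in $t$ and $a$. Finally, using $\alpha\equiv 0$ as a comparison control in the dynamic programming representation of $u$ together with \eqref{eq:Lthbound} yields $\|u\|_\infty \leq C\th + C\th^{1-q'}\int_0^T \LL_{q'}(\mu(s))^{q'}\,ds$, which combined with all previous estimates delivers the claimed $\|u\|_\infty \leq C\th$ and closes the bootstrap.
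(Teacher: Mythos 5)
Your overall architecture (monotonicity to control $\LL_{q'}(\mu)$, a Bernstein argument for $\nabla_x u$, then $\LL_\infty(\mu)$ and $\norminf{u}$) matches the paper's, and your first ingredient is exactly the paper's inequality \eqref{eq:apriorimuLq'}, which the paper also uses — but only as its \emph{last} step. The gap is in your second ingredient, and it propagates. The energy identity gives you $\int_0^T\int_{\TT^d_a} L^{\th}(t,x,\aa^{\mu(t)},\mu(t))\,d\mu(t)\,dt \leq C\norminf{u}+C\th$ (already circular, since $\norminf{u}$ is only derived at the end of your scheme), and then the coercivity \eqref{eq:Hthcoer} produces the good term $C_0^{-1}\th\int_0^T\int|\nabla_xu|^q\,dm$ \emph{together with} the bad term $C_0\th^{1-q'}\int_0^T\LL_{q'}(\mu(t))^{q'}dt$. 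Feeding your pointwise estimate into that bad term gives $C_0\,\frac{(q')^{q-1}(2C_0)^q}{q}\,\th\int_0^T\int|\nabla_xu|^q\,dm$ plus lower order: a term of \emph{identical} homogeneity to the good term, with a fixed constant that is not smaller than $C_0^{-1}$ for general $C_0$. No Young parameter can absorb one into the other, because there is no strict inequality of exponents and no small coefficient anywhere. The same deadlock reappears at the maximum point in your Bernstein step: via your first ingredient the $\mu$-contribution scales like $\th\norminf{w}^{1+\frac{q}{2}}$, exactly the order of the coercive term, again with untunable constants. "Being sharp with the constants" cannot rescue this.

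What is missing is the paper's actual first step, which breaks the circle \emph{before} any gradient information is available: one uses the dynamic-programming characterization of $u$ and compares the optimal control with $\aa'\equiv 0$, whose trajectory has law $\mt(t)$ (the uncontrolled heat flow) and joint law $\mut(t)=\mt(t)\otimes\delta_0$; then one applies \ref{hypo:LMono} to the pair $\lp\mu(t),\mut(t)\rp$ — crucially, two measures with \emph{different first marginals} — and uses \ref{hypo:Lbound} on $\int L^{\th}(\cdot,\mut)\,d\mut$. This replaces the integrating measure in the cost by $\mut$, whose control marginal is $\delta_0$, so the coercivity \eqref{eq:Lthcoer} is applied with $\LL_{q'}(\mut)=0$ and suffers no cancellation, yielding the unconditional bound $\int_0^T\LL_{q'}(\mu(t))^{q'}dt\leq C\th^{q'}$. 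With that in hand, $\norminf{u}\leq C\th$ follows from the maximum principle, the Bernstein weight $\vp\bigl(u^{\dd}+(1+C_0)\th^{1-q'}\int_{T-t}^{T}\LL_{q'}(\mu(s))^{q'}ds\bigr)|\nabla_xu^{\dd}|^2$ is well defined on a fixed compact range and its time derivative cancels the $\mu$-dependent defect in \eqref{eq:Hthcoer}, and only then does your first ingredient close the loop for $\sup_t\LL_{q'}(\mu(t))$ and $\LL_\infty$. Your use of \ref{hypo:LMono} only against $m(t)\otimes\delta_0$ (same first marginal) cannot substitute for this.
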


\begin{proof}
    \emph{First step: controlling
    $\int_0^T\LL_{q'}\lp\mu(t)\rp^{q'} dt$}

    Let us take $(X,\aa)$ defined by
    \begin{equation*}
        \lc
        \begin{aligned}
            \aa_t
            &=
            \aa^{\mu(t)}(t,X_t)
            =
            -H_p^{\th}\lp t, X_t,\nabla_xu(t,X_t),\mu(t)\rp,
            \\
            dX_t
            &=
            \aa_tdt
            +\sqrt{2\nu}dB_t,
            \\
            X_0
            &=
            \xi\sim m_0,
        \end{aligned}
        \right.
    \end{equation*}
    where $\lp B_t\rp_{t\in[0,T]}$
    is a Brownian motion independent
    of $\xi$.

    The function $u$ is
    the value function of an
    optimization problem,
    i.e. the lowest
    cost that a representative agent can
    achieve from time $t$ to $T$
    if $X_t=x$,
    when the probability measures
    $m$ and $\mu$ are fixed,
    i.e.
    \begin{equation}
        \label{eq:HJBopt}
        \aa_{|s\in[t,T]}
        =
        \argmin_{\aa'}
        \EE\lb \int_t^T
        L^{\th}\lp s, X^{\aa'}_s,\a'_s, \mu(s) \rp 
        +\th f\lp s,X^{\aa'}_s,m(s)\rp ds 
        +\th g\lp X^{\aa'}_T,m(T)\rp \rb,
    \end{equation}
    where for a control $\aa'$,
    we define
    \begin{equation*}
        \lc
        \begin{aligned}
            dX^{\aa'}_t
            &=
            \aa'_tdt
            +\sqrt{2\nu}dB'_t, \\
            X^{\aa'}_0
            &=
            \xi'\sim m_0,
        \end{aligned}
        \right.
    \end{equation*}
    and $\lp B'_t\rp_{t\in[0,T]}$
    is a Brownian motion independent
    of $\xi'$.
    Let us recall that
    for any $t\in[0,T]$,
    $m(t)$ is the law of $X_t$,
    and $\mu(t)$ is the law of
    $\lp X_t,\aa_t\rp$.
    We introduce $\Xt$
    the stochastic process
    defined by
    \begin{equation*}
        \lc
        \begin{aligned}
            d\Xt_t
            &=
            \sqrt{2\nu}dB_t, \\
            \Xt_0
            &=
            \xi\sim m_0.
        \end{aligned}
        \right.
    \end{equation*}
    We set
    $\mt(t)=\cL(\Xt_t)$ and
    $\mut(t)=\cL(\Xt_t)\otimes\dd_0$
    for $t\in[0,T]$.
    For the strategy consisting in taking $\alpha'=0$,
    \eqref{eq:HJBopt} yields the inequality:
    \begin{multline*}
        \int_0^T 
        \int_{\TT^d_a\times\RR^d}
        L^{\th}\lp t,x,\aa, \mu(t) \rp   d\mu(t,x,\aa) dt
        +\int_0^T\int_{\TT^d_a}\th f\lp t,x,m(t)\rp dm(t,x)dt
        +\int_{\TT^d_a}\th g\lp x,m(T)\rp dm(T,x)
        \\
        \leq
        \int_0^T
        \int_{\TT^d_a\times\RR^d}
        L^{\th}\lp t,x,\aa, \mu(t) \rp d\mut(t,x,\aa)dt
        +\int_0^T\int_{\TT^d_a}\th f\lp t,x,m(t)\rp d\mt(t,x)dt
        +\int_{\TT^d_a}\th g\lp x,m(T)\rp d\mt(T,x). 
    \end{multline*}
    This and
    \ref{hypo:Monogregx} imply that,
    \begin{equation}
        \label{eq:apriori_mono1}
        \int_0^T
        \int_{\TT^d_a\times\RR^d}
        L^{\th}\lp t,x,\aa, \mu(t)\rp d\mu(t,x,\aa) dt
        \leq
        \int_0^T
        \int_{\TT^d_a\times\RR^d}
        L^{\th}\lp t,x,\aa, \mu(t)\rp d\mut(t,x,\aa)dt
        +2C_0\th\lp1+T\rp.
    \end{equation}
    Assumption \ref{hypo:LMono}
    with $(\mu(t),\mut(t))$
    yields
    \begin{multline}
        \label{eq:apriori_mono2}
        \int_{\TT^d_a\times\RR^d}
        L^{\th}\lp t,x,\aa, \mu(t) \rp d\mut(t,x,\aa)+
        \int_{\TT^d_a\times\RR^d}
        L^{\th}\lp t,x,\aa, \mut(t) \rp d\mu(t,x,\aa) \\
        \leq
        \int_{\TT^d_a\times\RR^d}
        L^{\th}\lp t,x,\aa, \mu(t) \rp d\mu(t,x,\aa)+
        \int_{\TT^d_a\times\RR^d}
        L^{\th}\lp t,x,\aa, \mut(t)\rp d\mut(t,x,\aa). 
    \end{multline}
    Moreover, from \ref{hypo:Lbound} we obtain that,
    \begin{equation}
        \label{eq:apriori_mono3}
        \int_{\TT^d_a\times\RR^d}
        L^{\th}\lp t,x,\a, \mut(t) \rp   \mut(t,d(x,\a))
        = 
        \int_{\TT^d_a}
        \th L\lp t,x,0, \mut(t) \rp   \mt(t,dx)
        \leq
        C_0\th. 
    \end{equation}
    Therefore from
    \eqref{hypo:Lcoer},
    \eqref{eq:apriori_mono1},
    \eqref{eq:apriori_mono2} and
    \eqref{eq:apriori_mono3},
    we obtain,
    \begin{align*}
        \int_0^T
        \int_{\TT^d_a}
        \lp C_0^{-1}
        \th^{1-q'}\labs\aa\rabs^{q'}
        -C_0\th\rp d\mu(t,x,\aa)dt
        &\leq
        \int_0^T
        \int_{\TT^d_a\times\RR^d}
        L^{\th}\lp t,x,\aa, \mut(t) \rp d\mu(t,x,\aa)dt
        \\
        &\leq
        C_0\th(2+3T).
    \end{align*}
    This implies
    \begin{equation}
        \label{eq:apriori_intmu}
        \int_0^T
        \LL_{q'}\lp\mu(t)\rp^{q'} dt
        \leq 
        2C_0^{2}
        \th^{q'}
        (1+2T).
    \end{equation}

    \emph{Second step: the uniform estimate on $\norminf{u}$}

    Let us rewrite \eqref{eq:HJBth} in the following way,
    \begin{equation*}
        -\ptt u
        - \nu\Delta u
        + \lb \int_0^1 H^{\th}_p(t,x,s\nabla_x u,\mu(t))ds\rb
        \cdot \nabla_x u
        =
        H^{\th}(t,x, 0,\mu(t))
        +\th f(t,x,m(t)),
    \end{equation*}
    for $(t,x)\in(0,T)\times\TT^d_a$.
    The maximum principle for second-order parabolic equation,
    \ref{hypo:Monogregx},
    and \eqref{eq:Hbound}
    yield that
    \begin{equation*}
        \norminf{u}
        \leq
        C_0\th(1+2T)
        +C_0\th^{1-q'}\int_0^T
        \LL_{q'}\lp\mu(t)\rp^{q'} dt,
    \end{equation*}
    which implies that $u$ is uniformly bounded
    using the conclusion of the previous step.

    \emph{Third step: the uniform estimate on $\norminf{\nabla_xu}$.}
    
    The proof of this step relies on the same Bernstein-like method
    introduced in \cite{2019arXiv190411292K} Lemma $6.5$.
    We refer to the proof of the latter results for more details
    in the derivation of the equations below.

    Let us introduce
    $\rho\in C^{\infty}\lp\lb-\frac{a}2,\frac{a}2\rp^d \rp$
    a nonnegative mollifier
    such that $\rho(x)=0$ if $|x|\geq \frac{a}4$ 
    and $\int_{\lb-\frac{a}2,\frac{a}2\rp^d}\rho(x)dx=1$.
    For any $0<\dd<1$ and $t\in [0,T]$,
    we introduce
    $\rho^{\dd}=\dd^{-d}\rho\lp \frac{\cdot}{\dd}\rp$
    and 
    $u^{\dd}(t)=\rho^{\dd}\star u(t)$
    with $\star$ being the convolution operator
    with respect to the state variable.

    Possibly after modifying the constant $C$
    appearing in the first step,
    we can assume that 
    $\norminf{u}+\lp1+C_0\rp\th^{1-q'}\int_{0}^T\LL_{q'}\lp\mu(s)\rp^{q'}ds\leq C$
    using the first two steps
    in such a way that $C$ depends only on the constants in the assumptions,
    and not on $\th$.
    Then we introduce $\vp:[-C,C]\rightarrow\RR_+^*$
    and $w^{\dd}$ defined by
    \begin{equation}
    \label{eq:defphiw}
        \begin{aligned}
            &\vp(v)
            =
            \exp\lp\exp\lp
            - v\rp\rp,
            \\ 
            &w^{\dd}(t,x)
            =
            \vp\lp u^{\dd}(T-t,x)
            +\lp1+C_0\rp\th^{1-q'}\int_{T-t}^T\LL_{q'}\lp\mu(s)\rp^{q'}ds\rp
            \labs \nabla_x u^{\dd}\rabs^2(T-t,x),
        \end{aligned}
    \end{equation}
    for
    $(t,x)\in[0,T]\times\TT^d_a$,
    $v\in B_{\RR^d}\lp 0, C\rp$.
    In particular $\vp'<0$,
    and $\vp$, $1/\vp$, $-\vp'$ and $-1/\vp'$
    are uniformly bounded.
    We refer to the proof of Lemma $6.5$
    in \cite{2019arXiv190411292K}
    for the derivation of the following partial
    differential equation satisfied by $w^{\dd}$,
    \begin{multline}
        \label{eq:PDEw}
        \ptt w^{\dd} -\nu\Delta w^{\dd}
        + \nabla_xw^{\dd}\cdot  H^{\th}_p\lp x,\nabla_x u^{\dd}, \mu\rp 
        -2\nu\frac{\vp'}{\vp} \nabla_xw^{\dd}\cdot \nabla_xu^{\dd} 
        +2\nu\vp\labs D^2_{x,x}u^{\dd}\rabs^2
        \\
        =
        \frac{\vp'}{\vp}w^{\dd}\lb
        \nabla_xu^{\dd}\cdot H^{\th}_p\lp x,\nabla_xu^{\dd},\mu\rp
        -H^{\th}\lp x,\nabla_xu^{\dd},\mu\rp
        +\lp1+C_0\rp\th^{1-q'} \LL_{q'}\lp\mu\rp^{q'}\rb
        \\
        -\nu\frac{\vp''\vp-2\lp\vp'\rp^2}{\vp^3}\lp w^{\dd}\rp^2
        -2\vp\nabla_xu^{\dd}\cdot H^{\th}_x\lp x,\nabla_xu^{\dd},\mu\rp
        +2\th\vp\nabla_xu^{\dd}\cdot f_x^{\dd}\lp x,m\rp
        +R^{\dd}(t,x)
    \end{multline}
    in which $H^{\th}$, $f$, $f^{\dd}$, $u$,
    $u^{\dd}$ and $\mu$ are taken at time $T-t$
    and $w^{\dd}$ at time $t$,
    and where $f^{\dd}$ and $R^{\dd}$,
    are defined by,
    \begin{align*}
        f^{\dd}(x,m)
        =&
        \rho^{\dd}\star
        \lp f(\cdot, m)\rp(x),
        \\
            R^{\dd}(t,x)
            =&
            -\vp'\labs \nabla_xu^{\dd}\rabs^2
            \lb \rho^{\dd}\star\lp H^{\th}\lp \cdot , \nabla_x u, \mu\rp\rp(x)
            -H^{\th}\lp x,\nabla_xu^{\dd} ,\mu\rp\rb
            \\
            &-2\vp\nabla_xu^{\dd}\cdot\lb\lp \rho^{\dd}\star
            H^{\th}_x\lp\cdot, \nabla_xu(\cdot),\mu\rp\rp(x)
            -H^{\th}_x\lp x,\nabla_xu^{\dd},\mu\rp\rb,
            \\
            &+2\vp\nabla_xu^{\dd}\cdot\lb
            D_{x,x}^2u^{\dd}H^{\th}_p\lp x,\nabla_x u^{\dd},\mu\rp
            -\rho^{\dd}\star \lp D^2_{x,x}uH^{\th}_p\lp\cdot,\nabla_x u,\mu\rp\rp\rb.
    \end{align*}
    From \eqref{eq:Hthcoer},
    we obtain that
    \begin{equation*}
        \nabla_xu^{\dd}\cdot H^{\th}_p\lp x,\nabla_xu^{\dd},\mu\rp
        -H^{\th}\lp x,\nabla_xu^{\dd},\mu\rp
        +\lp1+C_0\rp\th^{1-q'} \LL_{q'}\lp\mu\rp^{q'}
        \geq
        C_0^{-1}\th\labs\nabla_xu^{\dd}\rabs^q
        +\th^{1-q'}\LL_{q'}\lp\mu\rp^{q'}
        -C_0\th.
    \end{equation*}
    Therefore, using \ref{hypo:Monogregx},
    \eqref{eq:PDEw}, \eqref{eq:Hthxbound},
    the facts that $\vp'<0$,
    that $\vp''\vp-2\lp\vp'\rp^2\geq 0$,
    that $\vp$, $\vp^{-1}$,
    $\vp'$, $\lp\vp'\rp^{-1}$ are bounded,
    and the latter inequality,
    we get
    \begin{multline}
        \label{eq:PDIw}
        \ptt w^{\dd} -\nu\Delta w^{\dd}
        + \nabla_xw^{\dd}\cdot  H^{\th}_p\lp x,\nabla_x u^{\dd}, \mu\rp 
        -2\nu\frac{\vp'}{\vp} \nabla_xw^{\dd}\cdot \nabla_xu^{\dd} 
        \\
        \leq
        -C^{-1}\lp \th\lp w^{\dd}\rp^{\frac{q}2}
        +\th^{1-q'}\LL_{q'}\lp\mu\rp^{q'}\rp w^{\dd}
        \\
        +C\lp w^{\dd}\rp^{\frac12}
        \lb \th+\th\lp w^{\dd}\rp^{\frac12}
        +\th\lp w^{\dd}\rp^{\frac{q}2}
        +\th^{1-q'}\LL_{q'}\lp\mu\rp^{q'}\rb
        +\norminf{R^{\dd}},
    \end{multline}
    up to updating $C$.
    We notice that the terms with the highest exponents in $w^{\dd}$
    and $\LL_{q'}\lp\mu\rp^{q'}$ in the right-hand side
    of the latter inequality is non-positive.
    Let us use Young inequalities and obtain
    \begin{align*}
        &\lp w^{\dd}\rp^{\frac12}
        \LL_{q'}\lp\mu\rp^{q'}
        \leq
        \ee w^{\dd}
        \LL_{q'}\lp\mu\rp^{q'}
        +
        \frac1{4\ee}
        \LL_{q'}\lp\mu\rp^{q'},
        \\
        &\lp w^{\dd}\rp^{\qt}
        \leq
        \ee\lp w^{\dd}\rp^{1+\frac{q}2}
        +\frac{q+2-2\qt}{q+2}
        \lp\frac{\ee (q+2)}{2\qt}\rp^{-\frac2{q+2-2\qt}},
    \end{align*}
    for any $\qt<1+\frac{q}2$ and $\ee>0$.
    Using systematically these two inequalities in \eqref{eq:PDIw}
    and taking $\ee$ small enough we finally obtain,
    \begin{equation*}
        \ptt w^{\dd} -\nu\Delta w^{\dd}
        + \nabla_xw^{\dd}\cdot  H^{\th}_p\lp x,\nabla_x u^{\dd}, \mu\rp 
        -2\nu\frac{\vp'}{\vp} \nabla_xw^{\dd}\cdot \nabla_xu^{\dd} 
        \\
        \leq
        C_{\ee}\lp \th+
        \th^{1-q'}\LL_{q'}\lp\mu\rp^{q'}\rp
        +\norminf{R^{\dd}},
    \end{equation*}
    where $C_{\ee}$ is a constant which depends on $\ee$ and
    the constants in the assumtions.
    From \ref{hypo:Monogregx}, the initial condition of $w^{\dd}$
    is bounded.
    Therefore the maximum principle for second-order parabolic
    equations implies that 
    \begin{equation}
        \label{eq:ineqwdd}
        \norminf{w^{\dd}}
        \leq
        C_{\ee}\lp \th+ \th T+
        \th^{1-q'}\int_0^T\LL_{q'}\lp\mu(t)\rp^{q'}dt\rp
        +T\norminf{R^{\dd}}.
    \end{equation}
    Let us point out that $\nabla_xu$ is the solution
    of the following backward $d$-dimensional parabolic equation,
    \begin{equation*}
        -\ptt\nabla_xu
        -\nu\Delta\nabla_xu
        +D^2_{x,x}u H_p\lp x,\nabla_xu,\mu\rp
        =
        \nabla_xf(x,m)
        -H_x\lp x,\nabla_xu,\mu\rp,
    \end{equation*}
    which has bounded coefficients and right-hand side,
    and a terminal condition in $C^{1+\bb_0}\lp\TT^d_a\rp$.
    Theorem $6.48$ in \cite{MR1465184} states
    that $\nabla_xu$ and $D^2_{x,x}u$ are continuous.
    This and the continuity of $H^{\th}$ and $H^{\th}_x$
    stated in Lemma \ref{lem:Hbound} imply that
    $R^{\dd}$ is uniformly convergent to $0$
    when $\dd$ tends to $0$.
    We conclude this step of the proof by passing to the limit
    in \eqref{eq:ineqwdd} as $\dd$ tends to $0$,
    using the estimate on
    $\int_0^T\LL_{q'}\lp\mu(t)\rp^{q'}dt$
    computed in the first step.
    We obtain that $\nabla_xu$ is uniformly bounded by a constant
    which depends on the constants in the assumptions,
    and depends linearly on $\th^{\frac12}$.

    \emph{Fourth step: obtaining uniform estimates
        on $\LL_{q'}\lp\mu\rp$ and $\LL_{\infty}\lp\mu\rp$.}
    
    Repeating the calculation in the proof
    of Lemma \ref{lem:apriorimu} with $L$
    satisfying 
    \eqref{eq:Lthcoer} and \eqref{eq:Lthbound},
    we obtain:
    \begin{equation}
        \label{eq:apriorimuLq'}
        \LL_{q'}\lp\mu(t)\rp^{q'}
        \leq
        4C_0^2\th^{q'}
        +\frac{\lp q'\rp^{q-1}\lp2C_0\rp^q}{q}
        \th^{q'}\norm[q]{\nabla_xu(t)}{L^q(m(t))}.
    \end{equation}
    This and the third step of this proof
    yield that $\sup_{t\in[0,T]}\LL_{q'}\lp\mu(t)\rp\leq C\th$
    for some $C$ depending only on the constants
    of the assumptions.
    We conclude that $\sup_{t\in[0,T]}\LL_{\infty}\lp\mu(t)\rp$
    satisfies a similar inequality
    using \eqref{eq:Hthpbound}.
\end{proof}

\section{Existence and Uniqueness Results}
\label{sec:MainMono}
Paragraph \ref{subsec:Exittda} is devoted to proving
the existence of solutions to \eqref{eq:MFGCttda},
which is step \ref{step:ttdaexi}.
In paragraph \ref{subsec:rd},
we propose a method to extend the existence
result to system \eqref{eq:MFGCrd} which is stated
on $\RR^d$; this concludes step \ref{step:MFGCrdexi}.
This method relies on compactness results
using the uniform estimates of $\nabla_xu$ that we
obtained in Lemma \ref{lem:Monoapriori}.
In paragraph \ref{subsec:UniMono}, we prove
step \ref{step:MFGCrduni},
namely
the uniqueness of the solution to \eqref{eq:MFGCrd}
and \eqref{eq:MFGCttda}.
Then the main results of the paper
and step \ref{step:MFGCb}
are addressed in paragraph \ref{subsec:Exib}.
We introduce a one-to-one correspondance between solutions
to \eqref{eq:MFGCb} and \eqref{eq:MFGCrd},
which allows us to
obtain directly the existence and the uniqueness
of the solution to \eqref{eq:MFGCb}
from the ones to \eqref{eq:MFGCrd}.

\subsection{Proof of Theorem 
\ref{thm:ExiMonottda}:
existence of solutions to \eqref{eq:MFGCttda}}
\label{subsec:Exittda}
We will use the a priori estimates
stated in Section \ref{sec:aprioriMono}
and the latter fixed point theorem,
in order to achieve step \ref{step:ttdaexi}
and prove the existence of solutions
to \eqref{eq:MFGCttda}.

\begin{proof}[Proof of Theorem \ref{thm:ExiMonottda}]
    We would like to use the 
    Leray-Schauder theorem \ref{thm:Leray}
    on a map which takes a flow of measures
    $\lp\mt_t\rp_{t\in[0,T]}\in\lp\cP\lp\TT^d_a\rp\rp^{[0,T]}$
    as an argument.
    However, $\cP\lp\TT^d_a\rp$ is not a Banach space.
    A way to go through this difficulty
    is to compose the latter map with a continuous map
    from a convenient Banach space to the set of
    such flows of measures.
    Here, we consider the map introduced in \cite{2019arXiv190205461F},
    namely $\rho:C^0\lp[0,T]\times\TT^d_a;\RR\rp
    \to C^0\lp[0,T]\times\TT^d_a;\RR\rp$
    defined by
    \begin{equation*}
        \rho(\mt)(t,x)
        =
        \frac{\mt_+(t,x)- a^{-d}\int\mt_+(t,y)dy}{\max\lp1,\int \mt_+\lp t,y\rp dy\rp}
        +a^{-d},
    \end{equation*}
    where $\mt_+(t,x)=\max\lp 0,\mt(t,x)\rp$.
    We will also have the use of $\mt^0$
    defined as the unique weak solution of
    \begin{equation}
        \label{eq:defmt0}
        \ptt \mt^0
        -\nu\Delta \mt^0
        =
        0
        \text{ on }
        (0,T)\times\TT^d_a,
        \quad
        \text{ and }
        \mt^0(0,\cdot)=m^0.
    \end{equation}
    We are now ready to construct the map $\Psi$
    on which we will use
    the Leray-Schauder theorem \ref{thm:Leray}.
    Take
    $\th\in[0,1]$,
    $u\in C^{0,1}\lp[0,T]\times\TT^d_a;\RR\rp$
    and
    $\mt\in C^{0}\lp[0,T]\times\TT^d_a;\RR\rp$.
    We define $m=\rho\lp\mt+\mt^0\rp$
    and
    $\lp\mu,\aa\rp\in C^0\lp[0,T];\cP\lp\TT^d_a\times\RR^d\rp\rp
    \times C^0\lp [0,T]\times\TT^d_a;\RR^d\rp$
    by,
    \begin{align*}
        \aa(t,x)
        &=
        -H^{\th}_p\lp t,x,\nabla_xu(t,x),\mu(t)\rp
        \\
        \mu(t)
        &=
        \lp I_d,\aa(t,\cdot)\rp\# m(t).
    \end{align*}
    This definition
    comes from the conclusions
    of Lemma \ref{lem:FPmucont} when $\th>0$.
    For $\th=0$, it simply consists in taking 
    $\aa=0$ and $\mu(t)=m(t)\otimes\dd_0$.
    Here we can repeat the calculation
    and obtain inequality \eqref{eq:apriorimuLq'}.
    This and \eqref{eq:Hthpbound}
    implies that
    $\norminf{\aa}$
    is bounded by $C\th$
    for some constant $C>0$ which depends on
    $\norminf{\nabla_xu}$ and is independent
    of $\th$ and $a$.

    Then we define
    $\mo$ the solution in the sense of distributions of
    \begin{equation*}
        \ptt \mo
        -\nu\Delta \mo
        +\divo\lp\aa\mo\rp
        =
        0,
    \end{equation*}
    supplemented with the initial condition
    $\mo(0,\cdot)=m^0$, with $m^0$ being $\bb_0$-Hölder
    continuous.
    Theorem $2.1$ section $V.2$ in \cite{MR0241822}
    states that $\mo$ is uniformly bounded by a constant
    which depends on $\norminf{m_0}$
    and $\norminf{\aa}$.
    Theorem $6.29$ in \cite{MR1465184}
    yields that
    $m\in C^{\frac{\bb}2,\bb}\lp[0,T]\times\TT^d_a\rp$
    for $\bb\in(0,\bb_0)$,
    and that its associated norm can be estimated from
    above by a constant which depends on
    $\norminf{\nabla_xu}$, $\bb$, $a$
    and the constants in the assumptions.
    The same arguments applied to $\mt^0$
    defined in \eqref{eq:defmt0} imply that 
    $\mt^0$ is in $C^{\frac{\bb}2,\bb}\lp[0,T]\times\TT^d_a\rp$
    and its associated norm is bounded.

    Then we take
    $\muo(t)=\lp I_d,\aa(t,\cdot)\rp\#\mo(t)$
    for any $t\in[0,T]$,
    and
    $\uo\in C^{0,1}\lp[0,T]\times\TT^d_a;\RR\rp$
    the unique solution in
    the sense of distributions of
    the following heat equation
    with bounded right-hand side,
    \begin{equation*}
        -\ptt \uo
        -\nu\Delta \uo
        =
        -H^{\th}\lp t,x,\nabla_xu,\muo(t)\rp
        +\th f(x,\mo(t)),
    \end{equation*}
    supplemented with the terminal condition
    $\uo(T,\cdot)=\th g\lp\cdot,\mo(T)\rp$
    which is in $C^{1+\bb_0}\lp\TT^d_a\rp$.
    Classical results (see for example Theorem $6.48$ in \cite{MR1465184})
    state that $u$ is in $C^{\frac12+\frac{\bb}2,1+\bb}$
    and its associated norm is bounded 
    by a constant which depends 
    on $\norminf{\nabla_xu}$, $\bb$, $a$
    and the constants in the assumptions.
    
    We can now construct the map
    $\Psi:\lp\th, u,\mt\rp\mapsto\lp\uo,\mo-\mt^0\rp$,
    from $C^{0,1}\lp[0,T]\times\TT^d_a;\RR\rp
    \times C^{0}\lp[0,T]\times\TT^d_a;\RR^d\rp$
    into itself.
    This map is continuous and compact,
    it satisfies $\Psi\lp0,u,\mt\rp=0$
    for any $\lp u,\mt\rp$.
    In particular,
    the fact that 
    $\norminf{\aa}\leq C\th$
    in the previous paragraph,
    implies that
    $\mo$ tends to $\mt^0$
    and $\uo$ tends to $0$
    as $\th$ tends to $0$.
    This gives the continuity
    of $\Psi$ at $\th=0$.
    Moreover the fixed points of $\Psi(\th)$
    are exactly the solutions to
    \eqref{eq:MFGCth},
    which are uniformly bounded
    by Lemma \eqref{lem:Monoapriori}.
    Therefore, by the Leray-Schauder fixed point theorem
    \ref{thm:Leray},
    there exists a solution to
    \eqref{eq:MFGCttda}.
\end{proof}

\subsection{Proof of Theorem \ref{thm:Exird}:
    passing from the torus to $\RR^d$}
\label{subsec:rd}
The purpose of this paragraph is to
extend the existence result 
to the system \eqref{eq:MFGCrd}
and achieve step \ref{step:MFGCrdexi}.

\begin{proof}[Proof of Theorem \ref{thm:Exird}]
    \emph{First step: constructing a sequence of approximate solutions.}

For $a>0$ we define
$\mt^{0,a}=\pi^a\#m^0$,
where $\pi^a:\RR^d\to\TT_a^d$ is
the quotient map.
Let $\chi^a:\TT^1_a\to\RR$ be the canonical injection
from the one-dimensional torus of radius $a$
to $\RR$, which image is $\lb-\frac{a}2,\frac{a}2\rp$.
Take $\psit\in C^{2}\lp\RR;\RR\rp$
periodic with a period equal to $1$ and
such that,
\begin{equation}
    \label{eq:defpsi}
\begin{aligned}
    \psit(x)
    &=
    x,
    \quad
    \text{ if }
    |x|\leq\frac{1}4,
    \\
    \labs\psit(x)\rabs
    &\leq
    \labs x\rabs,
    \quad
    \text{ for any }
    x\in\lb-\frac12,\frac12\rb,
\end{aligned}
\end{equation}
We define $\psi^a:\TT^d_a\to\RR^d$
by $\psi^a(x)_i
=a\psit\lp a^{-1}\chi^a(x_i)\rp$
for $i=1,\dots,d$,
this is a $C^2$ function.
Since $\psit\lp \frac{\cdot}a\rp$ has a period of $a$,
the function $\psi^a\circ\pi^a:\RR^d\to\RR^d$ satisfies
\begin{equation}
    \label{eq:psipi}
    \psi^a\circ\pi^a(x)_i=a\psit\lp\frac{x_i}a\rp,
\end{equation}
for $i=1,\dots,d$
and $x\in\RR^d$,
and is a $C^2$ function.
We are ready to construct periodic approximations
of $L$, $f$ and $g$ defined by,
\begin{align*}
    L^a\lp t,x,\aa,\mu\rp
    &=
    L\lp t,\psi^a(x),\aa,\lp \psi^a\otimes I_d\rp\# \mu\rp,
    \\
    f^a\lp t,x,m\rp
    &=
    f\lp t,\psi^a(x),\psi^a\# m\rp,
    \\
    g^a\lp x,m\rp
    &=
    g\lp \psi^a(x),\psi^a\# m\rp,
\end{align*}
for $(t,x)\in[0,T]\times\TT^d_a$,
$\aa\in\RR^d$,
$\mu\in\cP\lp\TT^d_a\times\RR^d\rp$.
Let $H^a$ be the periodic Hamiltonian
associated with $L^a$ by the Legendre transform:
\begin{equation*}
    H^a\lp t,x,p,\mu\rp
    =
    H\lp t,\psi^a(x),p,\lp \psi^a\otimes I_d\rp\# \mu\rp.
\end{equation*}
Let us point out that the fact that
$L$, $H$, $f$ and $g$ satisfy
\ref{hypo:Lreg}-\ref{hypo:Monogregx},
implies that
$L^{a}$, $H^a$, $f^a$ and $g^a$
satisfy these assumptions too
with $C_0\norminf{\psit'}$ instead of $C_0$.
So we can define $\lp \ut^a,\mt^a,\mut^a\rp$
a solution to \eqref{eq:MFGCttda}
with $H^a$, $f^a$, $g^a$ and $\mt^{0,a}$
instead of $H$, $f$, $g$ and $m^0$.
We define
$u^a\in C^0\lp[0,T]\times\RR^d;\RR\rp$,
 $m^a\in C^0\lp[0,T];\cP\lp\RR^d\rp\rp$
and $\mu^a\in C^0\lp[0,T];\cP\lp\RR^d\times\RR^d\rp\rp$
respectively by
\begin{align*}
    u^a(t,x)
    =
    \ut^a\lp t,\pi^a(x)\rp,
    \quad
    m^a(t)
    =
    \psi^a\#\mt^a(t),
    \text{ and }
    \quad
    \mu^a(t)
    =
    \lp\psi^a\otimes I_d\rp\#\mut^a(t),
\end{align*}
for $(t,x)\in[0,T]\times\RR^d$.

\emph{Second step: Proving that $m^a$ is compact.}

We are going to use the Arzelà-Ascoli Theorem
on $C^0\lp[0,T];\lp\cP\lp\RR^d\rp,W_1\rp\rp$
($\cP\lp\RR^d\rp$ is endowed with the
$1$-Wassertein distance).
First we prove that for any $t\in[0,T]$,
the sequence $\lp m^a(t)\rp_{a>1}$
is compact with the $1$-Wassertein distance,
by proving that $\int_{\RR^d}|x|^2dm^{a}(t,x)$
is uniformly bounded in $a$.
At time $t=0$, we have
\begin{equation*}
    \int_{\RR^d}\labs x\rabs^2 dm^a(0,x)
    =
    \int_{\TT^d_a}\labs \psi^a(x)\rabs^2 d\mt^{a,0}(x)
    =
    \int_{\RR^d}\labs \psi^a\circ\pi^a(x)\rabs^2 dm^0(x)
    \leq
    \int_{\RR^d}\labs x\rabs^2 dm^0(x)
    \leq
    C_0,
\end{equation*}
using \eqref{eq:defpsi}, \eqref{eq:psipi} and \ref{hypo:Monogregx}.
Let us differentiate
$\int_{\RR^d}|x|^2dm^{a}(t,x)$
with respect to time, perform
some integrations by part and obtain that
\begin{align*}
    \frac{d}{dt}
    \int_{\RR^d}|x|^2dm^{a}(t,x)
    &=
    \frac{d}{dt}
    \int_{\TT^d_a}\labs\psi^a(x)\rabs^2d\mt^{a}(t,x)
    \\
    &=
    \int_{\TT^d_a}
    \labs\psi^a(x)\rabs^2
    \lp\nu\Delta\mt^{a}(t,x)
    -\divo\lp\aa^{\mut^a(t)}(x)\mt^a(t,x)\rp\rp
    dx
    \\
    &\!\begin{multlined}[t][10.5cm]
        =
        2\int_{\TT^d_a}
        \sum_{i=1}^d
        \lb
        \nu\psit''\lp\frac{\chi^a(x^i)}a\rp
        \psit\lp\frac{\chi^a(x^i)}a\rp
        +\nu\psit'\lp\frac{\chi^a(x^i)}a\rp^2
        \right.
        \\
        \left.
        +\psi^a\lp x\rp
        \psit'\lp\frac{\chi^a(x^i)}a\rp
        \aa^{\mut^a(t),i}(x)
        \rb d\mt^a(t,x)
    \end{multlined}
    \\
    &\leq
    2\nu d\norminf{\psit''}\norminf{\psit}
    +2\nu d\norminf[2]{\psit'}
    +\norminf[2]{\psi'}\norminf[2]{\aa^{\mut^a(t)}}
    +\int_{\TT^d_a}\labs\psi^a(x)\rabs^2d\mt^{a}(t,x)
    \\
    &\leq
    2\nu d\norminf{\psit''}\norminf{\psit}
    +2\nu d\norminf[2]{\psit'}
    +\norminf[2]{\psit'}\norminf[2]{\aa^{\mut^a(t)}}
    +\int_{\RR^d}\labs x\rabs^2dm^{a}(t,x).
\end{align*}
We recall that
$(t,x)\mapsto\aa^{\mut^a(t)}(x)$
is uniformly bounded
with respect to $t$ and $a$
by Lemma \ref{lem:Monoapriori}.
Therefore, the latter two inequalities and
a comparison principle for ordinary differential equation
imply that 
$\int_{\RR^d}|x|^2dm^{a}(t,x)$
is uniformly bounded with respect to $a$ and $t$.

We define $\Xo^a$ a random process on $\RR^d$ by
\begin{equation*}
    d\Xo^a_t
    =
    \aa^{\mu^a(t)}\lp \pi^a\lp \Xo^a_t\rp\rp dt
    +\sqrt{2\nu}dB_t,
    \;
    \text{ and }
    \cL\lp \Xo^a_0\rp
    =
    m^0,
\end{equation*}
where $B$ is a Brownian motion on $\RR^d$
independent of $\Xo^a_0$.
For  $t,s\in[0,T]$, we have that,
\begin{align*}
    \EE\lb \labs \Xo^a_t-\Xo^a_s\rabs\rb
    &\leq
    \EE\lb \labs \Xo^a_t-\Xo^a_s\rabs^2\rb^{\frac12}
    \\
    &\leq
    \EE\lb\labs \int_s^t\sqrt{2\nu} dW_r\rabs^2\rb^{\frac12}
    +\EE\lb\labs \int_s^t\aa^{\mu^a(r)}dr\rabs^2\rb^{\frac12}
    \\
    &\leq
    \sqrt{2\nu d}|t-s|^{\frac12}
    +|t-s|\sup_{r\in[0,T]}\norminf{\aa^{\mu^a(r)}}.
\end{align*}
We define $\Xt^a_t=\pi^a\lp \Xo^a_t\rp\in\TT^d_a$
and $X^a_t=\psi^a\lp \Xt^a_t\rp\in\RR^d$,
for $t\in[0,T]$.
One may check that
the law of $\Xt^a_t$
satisfies the same Fokker-Planck equation
in the sense of distributions
as $\mt^a(t)$ by testing it with
$C^{\infty}\lp(0,T)\times\TT^d\rp$ test functions.
Therefore, the law of $\Xt^a_t$ is $\mt^a(t)$
and the law of $X^a_t$ 
is $m^a(t)$.
By definition of the $1$-Wassertein distance,
we obtain
\begin{align*}
    W_1\lp m^a(t),m^a(s)\rp
    &\leq
    \EE\lb \labs X^a_t-X^a_s\rabs\rb
    \\
    &\leq
    \EE\lb \labs \psi^a\circ\pi^a\lp\Xo^a_t\rp
    -\psi^a\circ\pi^a\lp\Xo^a_s\rp\rabs\rb
    \\
    &\leq
    \norminf{\psit'}
    \EE\lb \labs \Xo^a_t-\Xo^a_s\rabs\rb
    \\
    &\leq
    \norminf{\psit'}
    \lp\sqrt{2\nu d}|t-s|^{\frac12}
    +|t-s|\sup_{r\in[0,T]}\norminf{\aa^{\mu^a(r)}}\rp,
\end{align*}
where we used \eqref{eq:psipi} and the mean value theorem
to pass from the second to the third line
in the latter chain of inequalities.
Therefore by the Arzelà-Ascoli theorem,
$\lp m^{a}\rp_{a>0}$ is relatively compact
in $C^0\lp[0,T];\lp\cP\lp\RR^d\rp,W_1\rp\rp$.

\emph{Third Step: passing to the limit for a subsequence.}

We recall that $\ut^a$ and $\nabla_x\ut^a$
are uniformly bounded with respect to $a$,
so are $u^a$ and $\nabla_xu^a$.
Moreover $u^a$ satisfies the following
PDE,
\begin{equation*}
    -\ptt u^a
    -\nu\Delta u^a
    +H\lp t,\psi^a\circ\pi^a(x),\nabla_xu(t,x),\mu^a(t)\rp
    =
    f\lp t,\psi^a\circ\pi^a(x),m^a(t)\rp,
\end{equation*}
for $(t,x)\in (0,T)\times B_{\RR^d}\lp 0,a\rp$,
we recall that $\psi^a\circ\pi^a(x)=x$
if $|x|\leq\frac{a}4$.
For $a_0>0$,
we choose $a$ such that $a>4\lp a_0+1\rp$,
this implies that $u^a$
satisfies a backward heat equation on
$B_{\RR^d}\lp 0,a_0+1\rp$
with a bounded right-hand side, a bounded terminal condition,
and bounded boundary conditions.
Classical results on the heat equation
(see for example Theorem $6.48$ in \cite{MR1465184})
state that $u^a$ is in
$C^{\frac12+\frac{\bb}2,1+\bb}\lp [0,T]\times B_{\RR^d}\lp0,a_0\rp;\RR\rp$
and that its associated norm is bounded by a constant
which depends on the constants in the assumptions
and $a_0$, but not on $a$.
Therefore 
$\lp u^a_{|B_{\RR^d}\lp 0,a_0\rp}\rp_{a>1}$
is a compact sequence in
$C^{0,1}\lp[0,T]\times B_{\RR^d}\lp0,a_0\rp;\RR\rp$
for any $a_0>0$.
Then by a diagonal extraction method,
there exists $a_n$ an increasing sequence
tending to $+\infty$
in $\RR_+$ such that 
\begin{align*}
    m^{a_n}
    &\to
    m
    \quad
    &\text{ in } C^0\lp[0,T],\lp\cP\lp\RR^d\rp,W_1\rp\rp,
    \\
    u^{a_n}
    &\to
    u
    \quad
    &\text{ locally in }
    C^{0,1},
\end{align*}
for some $(u,m)\in
C^{0,1}\lp[0,T]\times\RR^d;\RR\rp
\times
C^0\lp[0,T];\lp\cP\lp\RR^d\rp,W_1\rp\rp$.
Let us prove that 
for $t\in[0,T]$,
$\mu^{a_n}(t)$
converges to a fixed
point of \eqref{eq:murd}
when $n$ tends to infinity;
indeed we notice that
\begin{align*}
    \mu^{a_n}(t)
    &=
    \lp \psi^{a_n}\otimes I_d\rp\#\mut^{a_n}(t)
    \\
    &=
    \lp \psi^{a_n}\otimes I_d\rp\#
    \lb\lp I_d,
    -H^{a_n}_p\lp t,\cdot,\nabla_x\ut^{a_n}\lp t,\pi^{a_n}\circ\psi^{a_n}(\cdot)\rp,\mut^{a_n}(t)\rp
    \rp\#\mt^{a_n}\rb
    \\
    &=
    \lp \psi^{a_n},
    -H_p\lp t,\psi^{a_n}(\cdot),\nabla_xu^{a_n}\lp t,\psi^{a_n}(\cdot)\rp,\mu^{a_n}(t)\rp
    \rp\#\mt^{a_n}
    \\
    &=
    \lp I_d,
    -H_p\lp t,\cdot,\nabla_xu^{a_n}\lp t,\cdot\rp,\mu^{a_n}(t)\rp
    \rp\#m^{a_n}.
\end{align*}
In particular, $\aa^{\mut^{a_n}(t)}=\aa^{\mu^{a_n}(t)}\circ\psi^{a_n}$
so $\norm{\aa^{\mu^{a_n}(t)}}{L^{\infty}\lp m\rp}$
is not larger than
$\norm{\aa^{\mut^{a_n}(t)}}{L^{\infty}\lp \mt\rp}$
since the support of $m^{a_n}$ is contained
in the image of the support of $\mt^{a_n}$
by $\psi^{a_n}$.
We proved in the previous step that $\lp m^a(t)\rp_{a\geq 1}$ is
compact in
$\lp\cP\lp\RR^d\rp,W_1\rp$,
and so is $\lp\mu^{a_n}(t)\rp_{n\geq1}$
in $\lp\cP\lp\RR^d\times\RR^d\rp,W_1\rp$,
since they are the pushforward measures
of $\lp m^{a_n}(t)\rp_{n\geq1}$ by
$\lp I_d,\aa^{\mu^{a_n}(t)}\rp$.
Let $\mu(t)\in\cP\lp\RR^d\times\RR^d\rp$
be the limit of a convergent subsequence of 
$\lp\mu^{a_n}(t)\rp_{n\geq1}$.
Passing to the limit in the weak* topology
in the latter chain of equalities
implies that
\begin{equation*}
    \mu(t)=
    \lp I_d,
    -H_p\lp t,\cdot,\nabla_xu\lp t,\cdot\rp,\mu(t)\rp
    \rp\#m(t).
\end{equation*}
Moreover, the uniqueness
of the fixed point \ref{eq:murd}
holds here,
see \cite{MR3805247} Lemma $5.2$ for the proof.
We obtained that there exists a unique
fixed point satisfying \ref{eq:murd},
and that it is the limit of any convergent
subsequence of $\lp\mu^{a_n}(t)\rp$.
This implies that the whole sequence
$\lp\mu^{a_n}(t)\rp_{n\geq1}$
tends to $\mu(t)$
in $\lp\cP\lp\RR^d\times\RR^d\rp,W_1\rp$.

Let us point out that $m^{a_n}$ satisfies
\begin{equation*}
    \ptt m^{a_n}
    -\nu\Delta m^{a_n}
    -\divo\lp H_p\lp t,x,\nabla_xu^{a_n},\mu^{a_n}\rp m^{a_n}\rp
    =
    0
\end{equation*}
in the sense of distributions on $(0,T)\times B\lp0,\frac{a_n}4\rp$,
by the definitions of $\psi^a$ and $\psit$.
Furthermore, at time $t=0$ we know that
$m^{a_n}(0)=\lp\psi^{a_n}\circ\pi^{a_n}\rp\#m^0$.
We recall that $\psi^{a_n}\circ\pi^{a_n}(x)=x$
for $x\in B_{\RR^d}\lp 0,\frac{a_n}4\rp$.
This implies that $m^{a_n}(0)$ tends to $m^0$
in the weak* topology of $\cP\lp \RR^d\rp$.

Finally we obtain that $(u,m,\mu)$ is a solution to \eqref{eq:MFGCrd},
by passing to the limit as $n$ tends to infinity
in the equations satisfied by
$\lp u^{a_n},m^{a_n},\mu^{a_n}\rp$.
\end{proof}

\begin{remark}
    In the above proof,
    we obtain that there exists a unique
    fixed point satisfying \ref{eq:murd}.
    We have thereby extended
    the conclusions of Lemma
    \ref{lem:FPmuLS} to system
    \ref{eq:MFGCrd}.
    Similarly, one may extend the conclusions
    of Lemma \ref{lem:FPmucont}
    to system \eqref{eq:MFGCrd}.
\end{remark}

\subsection{Proof of Theorem \ref{thm:UniMono}:
    uniqueness of the solutions to \eqref{eq:MFGCrd} and \eqref{eq:MFGCttda}}
\label{subsec:UniMono}
Step \ref{step:MFGCrduni},
namely the uniqueness of the solution to \eqref{eq:MFGCrd},
is obtained from the monotonicity assumptions
\ref{hypo:LMono} and \ref{hypo:gMono},
and the same arguments as in the case
of MFG without interaction through controls.

\begin{proof}[Proof of Theorem \ref{thm:UniMono}]
    Here, we write the proof for
    the system \eqref{eq:MFGCrd}.
    However, none of the arguments below is
    specific to the domain $\RR^d$,
    therefore this proof can be repeated
    for \eqref{eq:MFGCttda}.

    We suppose that $(u^1,m^1,\mu^1)$ 
    and $(u^2,m^2,\mu^2)$
    are two solutions to \eqref{eq:MFGCrd}.
    Now standard arguments (see \cite{MR2295621}) lead to
    \begin{multline}
        \label{eq:MonoUni}
        0
        =
        \int_0^T\int_{\RR^d}
        \lb \nabla_x(u^1-u^2)
        \cdot H_p\lp t, x,\nabla_xu^1,\mu^1\rp
        - H\lp t,x,\nabla_xu^1,\mu^1\rp
        +H\lp t,x,\nabla_xu^2,\mu^2\rp
        \rb dm^1(t,x)
        \\
        + \int_0^T\int_{\RR^d}
        \lb \nabla_x(u^2-u^1)
        \cdot H_p\lp t,x,\nabla_xu^2,\mu^2\rp
        - H\lp t,x,\nabla_xu^2,\mu^2\rp
        +H\lp t,x,\nabla_xu^1,\mu^1\rp
        \rb dm^2(t,x)
        \\
        +  \int_0^T\int_{\RR^d}
        \lp f(t,x,m^1(t))-f(t,x,m^2(t))\rp
        d(m^1(t,x)-m^2(t,x))dt
        \\
        +  \int_{\RR^d}
        \lp g(x,m^1(T))-g(x,m^2(T))\rp
        d(m^1(T,x)-m^2(T,x)).
    \end{multline}
    Recall that
    \begin{equation}
        \label{eq:conjugacy}
        \begin{aligned}
            &L\lp t,x,\aa^{\mu^i},\mu^i\rp
            =
            \nabla_xu^i
            \cdot H_p\lp t,x,\nabla_xu^i,\mu^i\rp
            - H\lp t,x,\nabla_xu^i,\mu^i\rp,
            \\
            &\nabla_xu^i
            =
            -L_{\aa}\lp t,x,\aa^{\mu^i},\mu^i\rp,
        \end{aligned}
    \end{equation}
    because $L$ is the Legendre tranform of $H$.
    From \ref{hypo:gMono},
    \eqref{eq:MonoUni} and \eqref{eq:conjugacy},
    we obtain that,
    \begin{multline}
        \label{eq:UniMono_aux2}
        0
        \geq
        \int_0^T\int_{\RR^d}
        \lb L\lp t,x,\aa^{\mu^1},\mu^1\rp
        -L\lp t,x,\aa^{\mu^2},\mu^2\rp
        -\lp\aa^{\mu^1}-\aa^{\mu^2}\rp
        \cdot L_{\aa}\lp t,x,\aa^{\mu^2},\mu^2\rp
        \rb dm^1(t,x)dt \\
        +\int_0^T\int_{\RR^d}
        \lb L\lp t,x,\aa^{\mu^2},\mu^2\rp
        -L\lp t,x,\aa^{\mu^1},\mu^1\rp
        -\lp\aa^{\mu^2}-\aa^{\mu^1}\rp
        \cdot L_{\aa}\lp t,x,\aa^{\mu^1},\mu^1\rp
        \rb dm^2(t,x)dt
    \end{multline}
    The function $L$ is strictly convex in $\aa$
    by Lemma \ref{lem:Lstrconv},
    which implies that,
    \begin{equation}
        \label{eq:Lconvex}
    \begin{aligned}
        L\lp t,x,\aa^{\mu^1},\mu^2\rp
        -L\lp t,x,\aa^{\mu^2},\mu^2\rp
        -\lp\aa^{\mu^1}-\aa^{\mu^2}\rp
        \cdot L_{\aa}\lp t,x,\aa^{\mu^2},\mu^2\rp
        \geq
        0,
        \\
        L\lp t,x,\aa^{\mu^2},\mu^1\rp
        -L\lp t,x,\aa^{\mu^1},\mu^1\rp
        -\lp\aa^{\mu^2}-\aa^{\mu^1}\rp
        \cdot L_{\aa}\lp t,x,\aa^{\mu^1},\mu^1\rp
        \geq
        0,
    \end{aligned}
    \end{equation}
    and \eqref{eq:Lconvex} turn to identities if and only if 
    $\aa^{\mu^1}=\aa^{\mu^2}$.
    The latter inequalities 
    and \eqref{eq:UniMono_aux2}
    yield
    \begin{align*}
        0
        &\geq
        \int_0^T\int_{\RR^d}
        \lb L\lp t,x,\aa^{\mu^1},\mu^1\rp
        -L\lp t,x,\aa^{\mu^1},\mu^2\rp
        \rb dm^1dt
        +\int_0^T\int_{\RR^d}
        \lb L\lp t,x,\aa^{\mu^2},\mu^2\rp
        -L\lp t,x,\aa^{\mu^2},\mu^1\rp
        \rb dm^2dt \\
        &=
        \int_0^T\int_{\RR^d\times\RR^d}
        \lb L\lp t,x,\aa,\mu^1\rp
        -L\lp t,x,\aa,\mu^2\rp
        \rb d\lp \mu^1-\mu^2\rp
        (t,x,\aa)dt.
    \end{align*}
    Assumption \ref{hypo:LMono}
    turns the latter inequality into an equality.
    This,
    the case of equality in \eqref{eq:Lconvex} and
    the continuity of $\aa^{\mu^1}$ and $\aa^{\mu^2}$
    yield that
    $\aa^{\mu^1}=\aa^{\mu^2}$.
    This implies that $m^1=m^2$
    by the uniqueness of the solution
    to \eqref{eq:FPKrd}, \eqref{eq:CImrd}.
    Therefore, we obtain $\mu^1=\mu^2$,
    and then $u^1=u^2$ by the uniqueness of the solution
    to \eqref{eq:HJBrd},\ref{eq:CFurd}.
\end{proof}

\subsection{Theorems \ref{thm:UniMonob} and \ref{thm:bpower}:
    existence and uniqueness of the solution to \eqref{eq:MFGCb}}
\label{subsec:Exib}
So far, no distinction has been made between $\mu_b$
and $\mu_{\aa}$, because they coincide for \eqref{eq:MFGCrd}
and \eqref{eq:MFGCttda}.
Now 
they may differ since the drift function
and the control may be different.
In this case $\mu_b$ defined by
    $\mu_b(t)
    =
    \Bigl[
    \lp x,\aa\rp
    \mapsto
    \lp x,
    b\lp t,x,\aa,\mu_{\aa}(t)\rp\rp
    \Bigr]{\#}\mu_{\aa}(t)$
is naturally the joint law of the states and the drifts.
The idea here to pass from \eqref{eq:MFGCrd} to \eqref{eq:MFGCb},
is to assume that $b$ is invertible with respect
to $\aa$, which changes the optimization problem
in $\aa$ into a new optimization problem
expressed in term of $b$.
This consists in changing the Lagrangian
from
$L\lp t,x,\aa,\mu_{\aa}\rp$
into 
\begin{equation*}
    L^b\lp t,x,b,\mu_{b}\rp
    =
    L\lp t,x,\aa^*\lp t,x,b,\mu_b\rp,
    \Bigl[
    \lp x,\bt\rp
    \mapsto
    \lp x,
    \aa^*\lp t,x,\bt,\mu_{b}\rp\rp
    \Bigr]{\#}\mu_{b}\rp.
\end{equation*}
The Hamiltonian $H^b$
defined as the Legendre transform of
$L^b$ is given by
\begin{equation}
    \label{eq:defHb}
    H^b\lp t,x,p,\mu_{b}\rp
    =
    H\lp t,x,p,
    \Bigl[
    \lp x,\bt\rp
    \mapsto
    \lp x,
    \aa^*\lp t,x,\bt,\mu_{b}\rp\rp
    \Bigr]{\#}\mu_{b}\rp.
\end{equation}
Conversely, we can obtain $L$ and $H$ from
$L^b$ and $H^b$ with the following relations,
\begin{align*}
    L\lp t,x,\aa,\mu_{\aa}\rp
    &=
    L^b\lp t,x,b\lp t,x,\aa,\mu_{\aa}\rp,
    \Bigl[
    \lp x,\aa\rp
    \mapsto
    \lp x,
    b\lp t,x,\aa,\mu_{\aa}\rp\rp
    \Bigr]{\#}\mu_{\aa}\rp,
    \\
    H\lp t,x,p,\mu_{\aa}\rp
    &=
    H^b\lp t,x,p,
    \Bigl[
    \lp x,\aa\rp
    \mapsto
    \lp x,
    b\lp t,x,\aa,\mu_{\aa}\rp\rp
    \Bigr]{\#}\mu_{\aa}\rp.
\end{align*}
Now we can state the following lemma which allows us to
pass from \eqref{eq:MFGCrd} to \eqref{eq:MFGCb},
or vice versa.
\begin{lemma}
    \label{lem:equisol}
    Under assumption
    \ref{hypo:binvert},
    $\lp u,m,\mu_{\aa},\mu_{b}\rp$
    is a solution to \eqref{eq:MFGCb}
    if and only if
    $\lp u,m,\mu_{b}\rp$
    is a solution to \eqref{eq:MFGCrd}
    with $H^b$ instead of $H$.
\end{lemma}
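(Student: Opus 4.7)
The plan is to show that the two systems coincide once we identify $\mu_{\aa}$ with $\mu_b$ through the bijection induced by \ref{hypo:binvert}. The key observation, which drives everything, is the identity
\begin{equation*}
    H^b_p\lp t,x,p,\mu_b\rp
    =
    H_p\lp t,x,p,\mu_{\aa}\rp,
    \qquad
    H^b\lp t,x,p,\mu_b\rp
    =
    H\lp t,x,p,\mu_{\aa}\rp,
\end{equation*}
whenever $\mu_b$ and $\mu_{\aa}$ are related by the pushforward formulas \eqref{eq:muaa}--\eqref{eq:mub}. The second equality is just the definition \eqref{eq:defHb}; the first follows by differentiating in $p$, since the $\mu_b$-argument of $H^b$ does not involve $p$.

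First I would prove the forward implication. Assume $(u,m,\mu_{\aa},\mu_b)$ solves \eqref{eq:MFGCb}. Using the identity above with $\mu_{\aa}=\mu_{\aa}(t)$ and $\mu_b=\mu_b(t)$, the HJB equation \eqref{eq:HJBb} becomes
\begin{equation*}
    -\ptt u - \nu\Delta u + H^b\lp t,x,\nabla_xu,\mu_b(t)\rp = f(t,x,m(t)),
\end{equation*}
and similarly the drift in the Fokker--Planck equation \eqref{eq:FPKb} becomes $H^b_p\lp t,x,\nabla_xu,\mu_b(t)\rp$. The fixed-point relation \eqref{eq:mub} rewrites as
\begin{equation*}
    \mu_b(t) = \lp I_d, -H^b_p(t,\cdot,\nabla_xu(t,\cdot),\mu_b(t))\rp\#m(t),
\end{equation*}
which is precisely \eqref{eq:murd} for $H^b$. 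The initial/terminal conditions transfer unchanged, so $(u,m,\mu_b)$ solves \eqref{eq:MFGCrd} with Hamiltonian $H^b$.

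For the converse, assume $(u,m,\mu_b)$ solves \eqref{eq:MFGCrd} with $H^b$ instead of $H$. Define $\mu_{\aa}(t)$ by \eqref{eq:muaa}, i.e.\ as the pushforward of $\mu_b(t)$ by $(x,\bt)\mapsto(x,\aa^*(t,x,\bt,\mu_b(t)))$. By \ref{hypo:binvert}, this is precisely the inverse of the transformation defining $\mu_b$ from $\mu_{\aa}$, so \eqref{eq:mub} is then automatic. Applying the identity above in the reverse direction converts the equations for $(u,m)$ back to \eqref{eq:HJBb}--\eqref{eq:FPKb}, and the boundary/initial conditions carry over. Thus $(u,m,\mu_{\aa},\mu_b)$ solves \eqref{eq:MFGCb}.

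The only delicate point is to verify that the pushforward relations \eqref{eq:muaa} and \eqref{eq:mub} are mutually inverse under \ref{hypo:binvert}; this is exactly the content of the two relations just below \ref{hypo:binvert} in paragraph \ref{subsec:hypoMono}, so no additional work is required. Everything else amounts to substituting the definition of $H^b$ and differentiating in $p$, so I expect no genuine obstacle.
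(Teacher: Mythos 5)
Your proposal is correct and follows essentially the same route as the paper: both reduce the lemma to the identity $H^b(t,x,p,\mu_b)=H(t,x,p,\mu_{\aa})$ (hence $H^b_p=H_p$) when $\mu_{\aa}$ and $\mu_b$ are linked by \eqref{eq:muaa}, and then check that the PDEs and the fixed-point relations \eqref{eq:muaa}--\eqref{eq:mub} transform into \eqref{eq:murd} for $H^b$ and back, using that the two pushforward maps are mutually inverse under \ref{hypo:binvert}.
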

The proof is straightforward and only consists
in checking on the one hand, that
    \eqref{eq:HJBb} and \eqref{eq:FPKb}
    are respectively equivalent to
    \eqref{eq:HJBrd} and \eqref{eq:FPKrd}
    with $H^b$ instead of $H$;
    on the other hand, that
    \eqref{eq:muaa} and \eqref{eq:mub}
    are equivalent to \eqref{eq:murd} with $H^b$,
    where we take $\mu_b=\mu$
    and $\mu_{\aa}$ defined
    by \eqref{eq:muaa}.

The following existence theorem
is a direct consequence of
Lemma \ref{lem:equisol},
and Theorem \ref{thm:Exird}.
\begin{corollary}
    \label{thm:ExiMonob}
    If $L^{b}$ satisfies 
    \ref{hypo:Lreg}-\ref{hypo:Monogregx},
    and $b$ satisfies
    \ref{hypo:binvert},
    there exists a solution to \eqref{eq:MFGCb}.
\end{corollary}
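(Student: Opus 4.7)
The plan is to deduce this corollary directly from Theorem \ref{thm:Exird} via the equivalence established in Lemma \ref{lem:equisol}. The key observation is that Lemma \ref{lem:equisol} furnishes a one-to-one correspondence between solutions $(u,m,\mu_{\aa},\mu_b)$ of \eqref{eq:MFGCb} and solutions $(u,m,\mu_b)$ of the simpler system \eqref{eq:MFGCrd}, provided one replaces the Hamiltonian $H$ by $H^b$ defined in \eqref{eq:defHb} (equivalently, the Lagrangian $L$ is replaced by $L^b$). So the whole question reduces to applying a known existence result on the right Lagrangian.

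Concretely, I would proceed as follows. First, I would observe that by hypothesis $L^b$ satisfies all of \ref{hypo:Lreg}-\ref{hypo:Monogregx}; this is exactly the list of assumptions required by Theorem \ref{thm:Exird}. Applying that theorem to the MFGC system \eqref{eq:MFGCrd} built with $L^b$ (and associated Hamiltonian $H^b$) in place of $L$ (and $H$) yields a triple $(u,m,\mu_b)$ solving
\begin{equation*}
    \mu_b(t)=\Bigl(I_d,-H^b_p\lp t,\cdot,\nabla_xu(t,\cdot),\mu_b(t)\rp\Bigr)\#m(t),
\end{equation*}
together with \eqref{eq:HJBrd}, \eqref{eq:FPKrd}, and the initial/terminal conditions, all with $H^b$ in place of $H$. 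Second, using \ref{hypo:binvert}, I define $\mu_{\aa}(t)$ from $\mu_b(t)$ via the pushforward formula \eqref{eq:muaa}. Lemma \ref{lem:equisol} then guarantees that the quadruple $(u,m,\mu_{\aa},\mu_b)$ satisfies \eqref{eq:MFGCb} in the sense of Definition \ref{def:sol}.

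There is essentially no obstacle beyond verifying that the hypotheses of Theorem \ref{thm:Exird} genuinely apply to $L^b$, which is part of the corollary's assumptions. The only point deserving a brief sanity check is the regularity and continuity of $H^b$ and its derivatives, which follow from Lemma \ref{lem:Hbound} applied with $L^b$ in place of $L$ — here \ref{hypo:binvert} enters to ensure the change of variable from $\aa$ to $b$ is compatible with the weak$^*$ topology on $\cP_{\infty,R}\lp\RR^d\times\RR^d\rp$. Once this is noted, the two-line argument outlined above delivers the existence of a solution to \eqref{eq:MFGCb}.
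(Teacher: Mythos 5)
Your proposal is correct and follows exactly the paper's argument: the corollary is obtained by applying Theorem \ref{thm:Exird} to the system \eqref{eq:MFGCrd} with $L^b$ and $H^b$ in place of $L$ and $H$, and then transporting the resulting triple back to a solution of \eqref{eq:MFGCb} via the equivalence of Lemma \ref{lem:equisol}. Nothing is missing.
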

Theorem \ref{thm:bpower}, i.e. the existence
part of step \ref{step:MFGCb},
is a consequence of the latter existence result
in which the assumptions on $L^b$ are stated on $L$
instead, which makes them more tractable.
However, we have to
make the additional
\ref{hypo:bbound}.

If $L$ and $b$ satisfy the assumptions of
Theorem \ref{thm:bpower},
it is straightforward to check that $L^b$
satisfies
\ref{hypo:Lreg}-\ref{hypo:Lbound}.
Therefore, Theorem \ref{thm:bpower}
is a consequence of Corollary \ref{thm:ExiMonob}.
Finally, Theorem
\ref{thm:UniMonob}
and the uniqueness part of step
\ref{step:MFGCb}
are direct consequences of
Theorem \ref{thm:UniMono}
and Lemma \ref{lem:equisol}.

\begin{acknowledgement}
I wish to express my gratitude to Y. Achdou
and P. Cardaliaguet for technical advices,
insightful comments and corrections.
This research was partially supported by the ANR
(Agence Nationale de la Recherche) through
MFG project ANR-16-CE40-0015-01.
\end{acknowledgement}

\bibliographystyle{plain}
\bibliography{MFGbiblio}

\end{document}